\newcommand{\Abracket}[1]{\left<#1\right>} % angle bracket
\newcommand{\parenthesis}[1]{\left(#1\right)} % round bracket
\newcommand{\braces}[1]{\left\{#1\right\}} % curly bracket
\newcommand{\R}{\mathbb{R}}
\newcommand{\dd}{\mathop{}\!\mathrm{d}}
\newcommand{\p}{\partial}
\newcommand{\D}{\slashed{D}}
\newcommand{\snabla}{\slashed{\nabla}} % spin connection
\newcommand{\II}{\mathrm{I\hspace*{-0.4ex}I}} % Second fundmantal form of N
\newcommand{\CHI}{\mathbf{B}}
\DeclareMathOperator{\dv}{\dd{v}}
\DeclareMathOperator{\Eigen}{Eigen}
\DeclareMathOperator{\End}{End}
\DeclareMathOperator{\id}{Id}
\DeclareMathOperator{\Ker}{Ker}
\DeclareMathOperator{\Span}{Span}
\newtheorem{thm}{Theorem}[section]
\newtheorem{prop}[thm]{Proposition}
\newtheorem{rmk}[thm]{Remark}
\title[Weighted eigenvalues of Dirac operators]{Weighted eigenvalues of Dirac operators: \\  complete continuity and comparison}
\thanks{2020 \textit{Mathematics Subject classification:} 35P05, 35F05, 47A75, 49R05}
\author[Z. Qiu]{Zixuan Qiu}
\address{Zixuan Qiu, School of mathematics and statistics, Beijing Institute of Technology, Zhongguancun South Street No. 5, 100081 Beijing, P.R.China.}
\email{ZixuanQiu@bit.edu.cn}
\author[R. Wu]{Ruijun Wu}
\address{Ruijun Wu, School of mathematics and statistics, Beijing Institute of Technology, Zhongguancun South Street No. 5, 100081 Beijing, P.R.China.}
\email{ruijun.wu@bit.edu.cn}
\begin{document}

\begin{abstract}
    We give a min-max characterization of all of the weighted Dirac eigenvalues, and show that the weighted eigenvalues and eigenspaces of Dirac operators are continuous with respect to weak~$L^p$ convergence of the inverse weights, for any~$p>n$.
    Moreover, we establish a comparison result for such weighted eigenvalue problems when there are no harmonic spinors.
\end{abstract}

\maketitle

{\bf Keywords}: weighted eigenvalue, min-max characterization, complete continuity, comparison of eigenvalues, eigen projectors

\section{Introduction}

    We consider a weighted eigenvalue problem for Dirac operators of the form
    \begin{align}\label{eq:weighted eigenvalue problem}
        \D_g\psi= \lambda A \psi,\qquad \mbox{ in } M,
    \end{align}
    where~$(M,g)$ is a spin Riemannian manifold, with or without boundary,~$\D_g$ is the Dirac operator on the given spinor bundle~$\Sigma_g M$ and~$\psi\in \Gamma(\Sigma_g M)$ is a spinor field,~$A\in\End(\Sigma_g M)$ is a symmetric endomorphism of the spinor bundle which is positive definite fiberwisely.
    It is well known that~\eqref{eq:weighted eigenvalue problem} has a sequence of real eigenvalues which diverge to both~$+\infty$ and~$-\infty$, and the eigenspinors constitute a complete basis of~$L^2$ space of spinors, see e.g.~\cite{Ginoux2009Dirac,LawsonMichelsohn1989spin}. 

    Equations of the form~\eqref{eq:weighted eigenvalue problem} arise in various geometric and variational problems.
    For example, in the local spinorial Weierstrass representation of surfaces in~$\R^3$~\cite{Friedrich1998Representation,Taimanov1997modified, Taimanov1998Weierstrass}, the associated spinor~$\psi$ satisfies an equation of the form
    \begin{align}
        \D_g\psi= H\psi
    \end{align}
    and has constant length, where~$H$ is a scalar function which turns out to be the mean curvature of the embedded surface.
    Also in super Liouville equations~\cite{Han2025superLiouville,  Jevnikar2020existence,Jevnikar2021SuperLiouville,Jevnikar2021sinhGordon, Jost2007super}, which are our main motivation, the spinor has to satisfy
    \begin{align}
        \D_g\psi= \lambda e^u\psi
    \end{align}
    for some~$u\in H^1(M,g)$ and some~$\lambda>0$.
    In the nonlinear spinorial Yamabe problem~\cite{Bartsch2022Yamabe,Isobe2024Yamabe,Isobe2023Yamabe}, we need to consider the linearized operator and its eigenvalues, which takes the form
    \begin{align}
        \D_g\psi=\lambda |\varphi|^{\frac{2}{n-1}}\psi
    \end{align}
    where~$\varphi$ is a nonzero solution to the spinorial Yamabe equation.
    There are, of course, other nonlinear weighted eigenvalue problems for Dirac type operators which takes more complicated forms.
    In the present work we focus on the weighted linear eigenvalue problem of the general form~\eqref{eq:weighted eigenvalue problem}, though the special case where~$A$ reduces to a positive function, i.e.~$A=\rho\in C^\infty(M,\R_+)$, is more common in various context.

    When~$A=\id$, this reduces to the classical eigenvalue problem of Dirac operators, which has been a source of new mathematics in the last fifty years, including the Friedrich's eigenvalue estimates, the heat flow approach to index theory, the spectral flow and eta invariants, and so on.
    We cannot list all the literature here but refer to e.g.~\cite{Ginoux2009Dirac} and the references therein.

    The continuous dependence on various parameters is also the theme of a lot of works, among them we mention the continuity of the Dirac spectrum in the Riemannian metrics shown by Nowaczyk~\cite{Nowaczyk2013continuity}. 
    Note that the spectral map has to be chosen carefully and the space of all spectra is endowed with an arsinh metric.
    In differential geometry, it is usually dealing with smooth deformations and hence the eigenvalues, sometimes even the eigenspinors, are smooth with the deformation parameter.
    This is in particular the case in the study of spectral flows~\cite{Robbin1995spectral}.
    It is intriguing to ask, as is usually the situation in analysis, what happens if the deformation is not smooth but merely continuous in some weak sense.

    Concerning the weighted eigenvalue problem~\eqref{eq:weighted eigenvalue problem}, it is natural to ask~\emph{ in what sense of continuous deformation of the weights~$A$ will the eigenvalues, or even the eigenspaces, would remain continuous. }
    In the present work we try to give a weak sufficient condition. 

    We remark that the continuity of eigenvalues and eigenfunctions on parameters is a classical subject which is fundamental in analysis and geometry. 
    In~\cite{CourantHilbert1953} the one-dimensional case is carefully analyzed. 
    Lou and Yanagida~\cite{Lou2006minimization} make an important progress where the weights may fail to be definite but they still establish continuity and comparison properties of the weighted eigenvalues. 
    The weighted eigenvalue problems for Laplacian operators play an important role in variational and geometric problems, see~\cite{AmbrosettiProdi1995primer, LiYau1983eigenvalue}
    Weighted eigenvalue problems of other operators are also considered in e.g.~\cite{Cuesta2009weighted}.

    To state our result we take the following hypothesis: 
    \begin{align}\label{hypo} \tag{H}
        & A_m, \, A \mbox{ are symmetric, positive definite},\\
        &\mbox{ and } A_m^{-1} \rightharpoonup A^{-1}, A_m\rightharpoonup A \mbox{  weakly in } L^p, \mbox{ for some } p>n.
    \end{align}
    The weak~$L^p$ convergence of~$A_m$ to~$A$ means that for any spinors~$\phi\in L^q,\eta\in L^r$ with~$\frac{1}{q}+\frac{1}{r}=1-\frac{1}{p}$, we have 
    \begin{align}
        \int_M \Abracket{A_m\,\phi, \eta}\dv_g \to \int_{M} \Abracket{A\,\phi,\eta}\dv_g, \qquad \mbox{ as } m\to+\infty.
    \end{align}
    Equivalently, the pointwise norm function~$x\mapsto |A_m(x)|_{op}$ (as operators on finite dimensional vector spaces) converges weakly to~$|A(x)|$. 
    We will use the latter description in the context. 
    In particular, when~$A_m=H_m \id_{\Sigma_g M}$, this reduces to the weak~$L^p$ convergence of~$L^p$ functions~$H_m$.

    We will write~$\Eigen(L;\lambda)$ for the eigenspace of an operator~$L$ associated to eigenvalue~$\lambda$.
    Moreover, we will see in Section~\ref{sect:minmax} and Section~\ref{sect:boundary} that the weighted eigenvalues for Dirac operators, both in closed and boundary case, can be listed as 
    \begin{align}
        -\infty\leftarrow \cdots \leq \lambda_2(A)\leq \lambda_1(A) < 0< \lambda_1(A) \leq \lambda_2(A)\le \cdots \to +\infty.
    \end{align}
    where~$\lambda_j(A)$ are the nonzero eigenvalues counted with multiplicities. 
    We denote the~$\ell$-th disctinct eigenvalues by~$\mu_\ell(A)$, which are also listed in a strictly increasing order, and let~$H_\ell(A)=\Eigen(A^{-1}\D_g;\mu_\ell)$ be the associated eigenspace with~$h_\ell=\dim H_\ell(A)$, and~$P_\ell(A)$ the associated projector.
            That is, for~$\ell>0$, let~$\sigma_{\ell} = \sum_{p = 1}^{\ell} h_p$, then 
            \begin{align}
                H_1(A)=&\Span\braces{\varphi_j(A)\mid 0<j\leq h_1}, \\
                H_\ell(A)=&\Span\braces{\varphi_j(A)\mid \sigma_{\ell-1} < j \leq \sigma_\ell },\quad  \ell>1,
            \end{align}
            for~$\ell<0$, the eigenspaces and projectors are similarly defined. 
            Moreover, for each~$A_m$, instead of collecting the eigenspaces according to the eigenvalues of~$A_m^{-1}\D_g$, we collect them according to the eigenvalues of~$A^{-1}\D_g$, namely  
            \begin{align}
                H_1(A_m)=&\Span\braces{\varphi_j(A_m)\mid 0<j\leq h_1}, \\
                H_\ell(A_m)=&\Span\braces{\varphi_j(A_m)\mid \sigma_{\ell-1} < j \leq \sigma_\ell   },\quad \ell>1, 
            \end{align}
            and~$H_\ell(A_m)$ for~$\ell<0$ is similarly defined. 
            The projector onto~$H_\ell(A_m)$ is denoted by~$P_\ell(A_m)$ for each~$\ell\neq 0$.

    \begin{thm}\label{thm:continuity-closed}
        Let~$(M^n,g)$ be a closed Riemannian spin manifold,~$(\Sigma_g M, g^s,\snabla,\gamma)$ be a spinor bundle over~$M$ and~$A_m,A$ be weights satisfying~\eqref{hypo}.
        Then, for each~$k\in \mathbb{Z}\setminus\braces{0}$,
        \begin{enumerate}
            \item[(i)] $\lambda_k(A_m)\to \lambda_k(A)$ as~$m\to+\infty$;
            \item[(ii)] ~$\varphi_k(A_m)$ converges to~$\Eigen(A^{-1}\D_g;\lambda_k(A))$, more precisely,
            \begin{align}
                \lim_{m\to+\infty} \inf_{\phi\in\Eigen(A^{-1}\D_g;\lambda_k(A))} \|\varphi_k(A_m)-\phi\|_{C^\alpha}=0,
            \end{align}
            where~$\alpha \in (0,1)$ is given in Proposition~\ref{prop:regularity}, and
            \begin{align}
                \lim_{m\to+\infty} \inf_{\phi\in\Eigen(A^{-1}\D_g;\lambda_k(A))} \|\varphi_k(A_m)-\phi\|_{H^{1}}=0.
            \end{align}
            \item[(iii)] As projection operators in~$H^1$, we have 
            \begin{align}
                \lim_{m\to+\infty} P_\ell(A_m)= P_\ell(A), \quad \forall\; \ell\neq 0. 
            \end{align}
           
            \end{enumerate}
    \end{thm}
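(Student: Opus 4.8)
The plan is to reduce everything to the operator~$K_m := A_m^{-1}\D_g$ versus~$K := A^{-1}\D_g$ acting between suitable function spaces, and to exploit the fact that~$\D_g$ has compact resolvent together with the elliptic regularity of Proposition~\ref{prop:regularity}. The starting point is the min--max characterization from Section~\ref{sect:minmax}: each~$\lambda_k(A)$ is described by a Rayleigh-type quotient involving the bilinear form~$(\psi,\psi)\mapsto \int_M\Abracket{\D_g\psi,\psi}\dv_g$ constrained by~$\int_M \Abracket{A\psi,\psi}\dv_g$. The key analytic input is that if~$\psi_m\rightharpoonup\psi$ weakly in~$H^1$, then by Rellich~$\psi_m\to\psi$ in~$L^q$ for every~$q<\frac{2n}{n-2}$; choosing~$q=r$ with~$\frac1q+\frac1r=1-\frac1p$ (possible precisely because~$p>n$, which forces~$q,r<\frac{2n}{n-2}$), the hypothesis~\eqref{hypo} gives~$\int_M\Abracket{A_m\psi_m,\psi_m}\dv_g\to\int_M\Abracket{A\psi,\psi}\dv_g$. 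This ``weak-to-strong'' passage in the constraint functional is the engine of the whole argument.

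For part~(i), I would prove~$\limsup_m\lambda_k(A_m)\le\lambda_k(A)$ and~$\liminf_m\lambda_k(A_m)\ge\lambda_k(A)$ separately. For the~$\limsup$: take a~$k$-dimensional subspace~$V$ of eigenspinors for~$K$ nearly achieving~$\lambda_k(A)$, plug~$V$ itself into the min--max for~$A_m$, and use the convergence of the constraint functional (uniformly on the finite-dimensional unit sphere of~$V$, which is compact) to conclude. For the~$\liminf$: pick normalized near-optimizers~$\psi_m$ for~$\lambda_k(A_m)$; the uniform bound on the Rayleigh quotient together with~$\int\Abracket{A_m\psi_m,\psi_m}=1$ and a bound on~$\|A_m^{-1}\|$ (which follows from~$A_m^{-1}\rightharpoonup A^{-1}$ in~$L^p$ being bounded in~$L^p$, plus a Sobolev/interpolation estimate) gives~$\|\psi_m\|_{H^1}\le C$; extract a weak~$H^1$ limit~$\psi$, pass to the limit in both the numerator (weak lower semicontinuity, or rather just weak convergence of the bounded symmetric form) and the denominator (the strong-convergence trick above), and identify~$\psi$ as a competitor for~$\lambda_k(A)$. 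Counting multiplicities requires running this for a full block of~$k$ consecutive eigenvalues simultaneously and checking that the limiting spinors span a space of the right dimension — here one uses that the~$L^2(A\,\cdot,\cdot)$-orthonormality is preserved in the limit.

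For part~(ii), once~(i) is known, take the normalized eigenspinors~$\varphi_k(A_m)$; they satisfy~$\D_g\varphi_k(A_m)=\lambda_k(A_m)A_m\varphi_k(A_m)$, the right side is bounded in~$L^p$ (since~$\varphi_k(A_m)$ is bounded in~$C^0$ by Proposition~\ref{prop:regularity} bootstrapped from an~$H^1$ bound and~$A_m$ bounded in~$L^p$), so elliptic estimates for~$\D_g$ give a uniform~$W^{1,p}\hookrightarrow C^\alpha$ bound with~$\alpha=1-\frac np$; a subsequence converges in~$C^{\alpha'}$ for~$\alpha'<\alpha$ and in~$H^1$ to some~$\phi$, and passing to the limit in the equation — using~$A_m\varphi_k(A_m)\rightharpoonup A\phi$, which needs the strong~$C^0$ convergence of~$\varphi_k(A_m)$ against the weak~$L^p$ convergence of~$A_m$ — shows~$\D_g\phi=\lambda_k(A)A\phi$, i.e.~$\phi\in\Eigen(K;\lambda_k(A))$. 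Since every subsequence has a further subsequence converging to that eigenspace, the full distance-to-eigenspace sequence goes to zero. For part~(iii), write~$P_\ell(A_m)=\sum_{j\in\text{block }\ell}\Abracket{\cdot,A_m\varphi_j(A_m)}_{L^2}\varphi_j(A_m)$ (the spectral projector for the~$A_m$-self-adjoint operator~$K_m$ in the~$A_m$-weighted inner product), and show it converges strongly — in fact in~$H^1$-operator norm, since the range is finite-dimensional — to the analogous expression for~$A$; this follows by combining~(ii) with the convergence of the weighted inner products on the relevant finite-dimensional spaces, plus the fact that~$H^1$-boundedness of the inputs makes strong convergence on a dense set upgrade to operator-norm convergence on the finite-rank range.

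The main obstacle I expect is the bookkeeping around \emph{multiplicities and degenerate eigenvalues}: when~$\lambda_k(A)$ has multiplicity greater than one, the individual~$\varphi_k(A_m)$ need not converge (they can rotate within the limiting eigenspace), which is exactly why the statement is phrased as distance to~$\Eigen(K;\lambda_k(A))$ rather than convergence to a fixed spinor; and when eigenvalues \emph{collide in the limit} (two distinct~$\lambda_k(A_m), \lambda_{k+1}(A_m)$ merging, or a gap closing) one must be careful that the projectors~$P_\ell$ are defined via the \emph{target} spectral blocks of~$A$ as the excerpt specifies, so the count is stable by construction — but verifying that no eigenvalue ``escapes to zero'' (which would break the indexing, since we only track nonzero eigenvalues) requires a lower bound~$|\lambda_k(A_m)|\ge c_k>0$ uniform in~$m$, and that in turn rests on a uniform upper bound for~$\|A_m\|_{L^p}$ controlling the constraint functional from above on the unit~$H^1$-sphere. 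Securing that two-sided control on the weighted norms from~\eqref{hypo} alone — both~$A_m$ and~$A_m^{-1}$ bounded in~$L^p$, hence not degenerating — is the technical heart, and everything else is a fairly standard weak-convergence-plus-elliptic-regularity routine.
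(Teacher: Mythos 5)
Your proposal misremembers the min--max characterization that actually governs \eqref{eq:weighted eigenvalue problem}, and this is not a cosmetic slip: it breaks both halves of your argument for~(i). You write that~$\lambda_k(A)$ is ``described by a Rayleigh-type quotient involving~$\int_M\Abracket{\D_g\psi,\psi}\dv_g$ constrained by~$\int_M\Abracket{A\psi,\psi}\dv_g$,'' but for the Dirac operator this naive quotient has infimum~$-\infty$ over every subspace and characterizes nothing — $\D_g$ is strongly indefinite, which is precisely the difficulty the paper flags in Section~\ref{sect:minmax}. What the paper uses is the Ammann-style dual quotient of Proposition~\ref{prop:minmax},
\begin{align}
\frac{1}{\lambda_k(A)}=\inf_{V}\sup_{\psi\perp_A V}\frac{\int_M\Abracket{\psi,\D_g\psi}\dv_g}{\int_M\Abracket{A^{-1}\D_g\psi,\D_g\psi}\dv_g},
\end{align}
in which the \emph{inverse weight}~$A^{-1}$ sits in the denominator paired with~$\D_g\psi$. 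This is why hypothesis~\eqref{hypo} asks for weak~$L^p$ convergence of~$A_m^{-1}$ and not only of~$A_m$. Your~$\limsup$ half is the right move in spirit (fixed test space from the~$A$-eigenspinors, $A_m$-orthogonalized to the earlier~$A_m$-eigenspinors, then pass to the limit), and once you swap in the correct quotient it works just as in the paper's Step~I, because for a \emph{fixed}~$\D_g\eta^{(m)}$ in a fixed finite-dimensional space the weak~$L^p$ convergence~$A_m^{-1}\rightharpoonup A^{-1}$ already gives convergence of the denominator.

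Your~$\liminf$ half has a second, independent gap. You propose to take normalized near-optimizers~$\psi_m$, extract a weak~$H^1$ limit, and pass to the limit in both parts of the quotient. Two things go wrong. First, the bound~$\int\Abracket{A_m\psi_m,\psi_m}=1$ together with a bound on~$\int\Abracket{\D_g\psi_m,\psi_m}$ does \emph{not} give~$\|\psi_m\|_{H^1}\le C$: unlike the Laplacian case, the quadratic form~$\psi\mapsto\int\Abracket{\D_g\psi,\psi}$ is indefinite and controls no norm whatsoever, so weak~$H^1$ compactness is not available from the variational data alone. Second, even if you had weak~$H^1$ bounds, the correct denominator~$\int\Abracket{A_m^{-1}\D_g\psi_m,\D_g\psi_m}$ involves~$\D_g\psi_m$ in \emph{both} slots; weak~$H^1$ convergence of~$\psi_m$ only gives weak~$L^2$ convergence of~$\D_g\psi_m$, and a weak-times-weak product does not pass to the limit. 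The Rellich trick you describe (strong~$L^q$ convergence for~$q<2n/(n-2)$) applies to~$\psi_m$, not to~$\D_g\psi_m$, so it is the wrong place to invoke it.

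The paper's route avoids both traps: having secured~$\limsup_m\lambda_k(A_m)\le\lambda_k(A)$ (Step~I), it feeds the resulting uniform bound on the eigenvalues together with the~$L^p$ bounds on~$A_m,A_m^{-1}$ into the a priori estimates of Proposition~\ref{prop:regularity} to get \emph{uniform}~$H^1$ and~$C^\alpha$ bounds for the eigenspinors (Step~II). This is strong, not weak, compactness; along a subsequence the equations~$\D_g\varphi_k(A_m)=\lambda_k(A_m)A_m\varphi_k(A_m)$ pass to the limit tested against smooth spinors (weak~$L^p$ of~$A_m$ against~$C^0$ convergence of~$\varphi_k(A_m)$), the limit is a genuine~$A$-eigenpair, and an induction on~$k$ — not a variational lower bound — pins down~$\lambda_\infty=\lambda_k(A)$, with the~$k=1$ base case ruling out escape to zero via~$A$-orthogonality to~$\ker\D_g$. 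Your part~(ii) sketch already contains most of this machinery; the gap is that you present it as a post-processing step for~(ii) rather than as the mechanism that \emph{replaces} your~$\liminf$ argument, which as written cannot be closed. Finally, your~(iii) sketch (``convergence of weighted inner products on finite-dimensional spaces'') captures only half the content: showing~$P_\ell(A)^\perp P_\ell(A_m)\to 0$ indeed follows from~(ii), but the reverse inclusion~$P_\ell(A_m)^\perp P_\ell(A)\to 0$ is where the real work is, and it requires the Rayleigh-quotient contradiction argument of the paper's Step~V (splitting the expansion into blocks, passing to the spinor~$\phi_m=\psi-\delta_m$, and comparing~$\int\Abracket{\D_g\phi_m,\phi_m}/\int\Abracket{A_m^{-1}\D_g\phi_m,\D_g\phi_m}$ against~$1/\mu_\ell(A)$). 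That step is not a straightforward consequence of strong convergence on finite-dimensional ranges.
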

    Some comments are in order.
    Firstly, if the weights~$A_m$ converges to~$A$ in~$C^1$, then it is well known that the corresponding weighted eigenvalues converges, as well as the associated projectors to the corresponding eigenspaces, see e.g.~\cite{Kato1995perturbation}.
    The point here is that when the weights converge only weakly, we still have the continuity.
    Such continuity are referred to as \emph{complete continuity} in functional analysis~\cite{Taylor1986Intro, WenYangZhang2018complete}. 
    Thus what we obtain is the complete continuity of the eigenvalue, the eigenspinors as well as the eigenprojectors with respect to the weights. 
    Secondly, our hypothesis about the convergence of~$A_m^{-1}$ is not the usual one (namely~$A_m\rightharpoonup A$).
    This is because the Dirac operator is strongly indefinite and we have to use the min-max description given in Proposition~\ref{prop:minmax} where~$A^{-1}$ appears naturally.
    But note that
    \begin{align}
        A_m^{-1}-A^{-1}= -A^{-1}(A_m - A) A_{m}^{-1}.
    \end{align}
    Thus if~$A_m\rightharpoonup A$ weakly in~$L^p$ and~$A^{-1}, A_m^{-1}$ are uniformly bounded,  then~$A_m^{-1}\rightharpoonup A^{-1}$.
    On the other hand, under the hypothesis~\eqref{hypo},~$A_m^{-1},A^{-1}$ are clearly uniformly bounded.
    Thirdly, a similar statement can be made for the weighted eigenvalues for Laplacian operators on manifolds, using Remark~\ref{rmk:Laplacian}.

    In general the result may fail for~$p=n$.
    At least in our approach the a priori estimates fail in this limit case.
    It is interesting to construct explicit counterexamples.

    \

    In case~$M$ comes with a smooth boundary~$\p M$, we need to impose suitable boundary conditions.
    Different from the situation of spectral flows in odd dimensions where the Atiyah-Patodi-Singer boundary conditions are usually used, here we impose a local boundary condition, namely the chiral boundary condition
    \begin{align}
        \CHI^+\psi=0, \qquad \mbox{ on } \; \p M.
    \end{align}
    Here~$\CHI^+$ stands for the chiral operator defined along~$\p M$, see Section~\ref{sect:boundary}.
    In this case a similar statement holds:
    \begin{thm}
        The weighted \emph{chiral} eigenvalues and eigenspaces of~\eqref{eq:weighted eigenvalue problem} are continuous with respect to continuous deformations of the weights~$A$ and~$A^{-1}$ in the weak~$L^p$ topology in the sense of~\eqref{hypo}, for any~$p>n$.
    \end{thm}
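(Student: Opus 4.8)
The strategy is to mirror the proof of Theorem~\ref{thm:continuity-closed} in the closed case, replacing the Dirac operator $\D_g$ on $L^2(\Sigma_g M)$ by the Dirac operator equipped with the chiral boundary condition $\CHI^+\psi=0$ on $\p M$, which is known to be self-adjoint with compact resolvent, discrete real spectrum unbounded in both directions, and elliptic regularity up to the boundary. The min-max characterization of the weighted chiral eigenvalues $\lambda_k(A)$ is obtained exactly as in Proposition~\ref{prop:minmax}: one works on the Hilbert space $H^1_{\CHI^+}(\Sigma_g M)$ of $H^1$ spinors satisfying the chiral boundary condition, with the indefinite quadratic form $\psi\mapsto\int_M\Abracket{\D_g\psi,\psi}$ and the positive form $\psi\mapsto\int_M\Abracket{A\psi,\psi}$, noting that the chiral boundary condition makes the boundary term in the integration-by-parts for $\D_g$ vanish, so the form is symmetric. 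The weighted eigenvalues are then the critical values of the Rayleigh-type quotient restricted to the appropriate Grassmannian of subspaces, and $A^{-1}$ enters as before.

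\textbf{Key steps, in order.} First I would record the functional-analytic setup for the chiral boundary problem: self-adjointness, discreteness of the spectrum, absence of essential spectrum, and the a priori $H^1$ and $C^\alpha$ elliptic estimates up to the boundary for solutions of $\D_g\psi=f$ with $\CHI^+\psi=0$ (these are standard for the chiral/local elliptic boundary condition and replace Proposition~\ref{prop:regularity}). Second, I would establish the min-max formula for $\lambda_k(A)$ in terms of $A^{-1}$ on $H^1_{\CHI^+}$, verifying that the boundary condition is preserved under the relevant minimizing sequences and that the form domain is independent of $A$ since $A,A^{-1}$ are bounded. Third, given a sequence $A_m$ with $A_m^{-1}\rightharpoonup A^{-1}$ and $A_m\rightharpoonup A$ weakly in $L^p$, $p>n$, I would run the same compactness argument: normalized eigenspinors $\varphi_k(A_m)$ are bounded in $H^1_{\CHI^+}$, hence (by Rellich and the boundary-trace theorems) precompact in $C^\alpha$ and in $L^2$; the weak convergence $A_m^{-1}\rightharpoonup A^{-1}$ lets one pass to the limit in the weak formulation $\int\Abracket{\D_g\varphi_k(A_m),\eta}=\lambda_k(A_m)\int\Abracket{A_m\varphi_k(A_m),\eta}$ (the key point: a strongly $C^\alpha$-convergent factor times a weakly $L^p$-convergent factor converges), identifying the limit as a chiral eigenspinor for $A$. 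Fourth, upper/lower semicontinuity of $\lambda_k$ both follow from the min-max characterization: lower semicontinuity from testing with fixed subspaces of smooth spinors satisfying $\CHI^+\psi=0$ on which $\int\Abracket{A_m^{-1}\cdot,\cdot}$ converges, upper semicontinuity from the compactness just described. Finally, convergence of the eigenprojectors $P_\ell(A_m)\to P_\ell(A)$ in $H^1$ follows from (i) and (ii) exactly as in the closed case, once one knows the total multiplicity of eigenvalues in a small interval around $\mu_\ell(A)$ stabilizes.

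\textbf{Main obstacle.} The substantive point, beyond transcribing the closed-case argument, is the boundary regularity: one must ensure that the a priori $H^1$ and $C^\alpha$ estimates hold uniformly in $m$ for the family $\D_g\varphi_k(A_m)=\lambda_k(A_m)A_m\varphi_k(A_m)$ under the chiral boundary condition, even though the right-hand side is only controlled in $L^p$ (via $\lambda_k(A_m)$ bounded and $\|A_m\|_{L^p}$ bounded, using the weak convergence of $|A_m|$). This requires the local elliptic boundary estimate $\|\psi\|_{W^{1,p}}\lesssim \|\D_g\psi\|_{L^p}+\|\psi\|_{L^p}$ for the chiral boundary condition together with the Sobolev embedding $W^{1,p}\hookrightarrow C^\alpha$ for $p>n$ with $\alpha=1-n/p$ — the same mechanism that forces $p>n$ in the closed case. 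One must also check that the chiral boundary condition, being a local condition on $\Sigma_g M|_{\p M}$ given by a zeroth-order bundle projection, is stable under $C^0$-convergence of the spinors so that the limit spinor still satisfies $\CHI^+\phi=0$; this is immediate from the boundary trace being continuous on $H^1$ and from $\CHI^+$ being a fixed continuous bundle endomorphism. With these in hand, the proof is the same as that of Theorem~\ref{thm:continuity-closed}.
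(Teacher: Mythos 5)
Your proposal is correct and follows essentially the same route as the paper's Section~\ref{sect:boundary}: min-max characterization on the chiral domain, elliptic boundary regularity bootstrapped from the $L^p$ control on $A_m$ and $A_m^{-1}$, compactness to extract limits that are identified as $A$-weighted chiral eigenpairs, and then projector convergence. The one simplification you could have noted is that $\ker(\D_g,\CHI^+)=\{0\}$ (the paper cites this explicitly), so the Grassmannians in the min-max formula do not depend on $A$; this removes the only place where the closed-case argument had to worry about harmonic spinors.
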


    The more precise statement is given in Theorem~\ref{thm:continuity-bdd}.

    The outline of the proof is similar to that in~\cite{WenYangZhang2018complete} where the complete continuity of Laplacian eigenvalue problems in a bounded Euclidean domain with Dirichlet boundary conditions was considered.
    While the Laplacian operator with Dirichlet boundary condition is a positive operator and enjoys nice min-max characterizations, the Dirac operator is strongly indefinite and requires special care.
    The spectrum of this weighted eigenvalue problem consists of infinitely many negative eigenvalues, making it hard to start with the min-max procedure as usual.
    We generalize Ammann's characterization of the first positive eigenvalues, and obtain a new characterization of all nonzero weighted eigenvalues in Proposition~\ref{prop:another minmax} for the weighted closed eigenvalues and Proposition~\ref{prop:another minmax bdd} for the weighted chiral eigenvalues, for which the theory is more involved.
    We will first prove that~$\lambda_k(A_m)^{-1}$ converges to~$\lambda_k(A)^{-1}$, then show the first positive eigenvalues will not converge to zero, hence giving a uniform lower bound for positive eigenvalues; finally we conclude the convergence of eigenvalues.
    By establishing uniform a priori estimates for the weighted eigenspinors, we can finally show the convergence of the eigenspinors to some limit contained in the corresponding eigenspaces. 
    Conversely, any~$A$-weighted eigenspinor is shown to be approximated by some~$A_m$-weighted eigenspinors.
    This will be shown by a contradition argument.
    Thus the orthogonal projectors converge as operators, proving the continuity of eigenspaces.

    As a byproduct of the above min-max characterization of weighted eigenvalues, we prove a comparison result for the eigenvalues of~\eqref{eq:weighted eigenvalue problem}, which is also motivated by the analysis of weighted eigenvalue of Laplacian operator with Dirichlet boundary conditions in Euclidean domains.
    \begin{thm}\label{thm:comparison}
        Let~$A_1,A_2\in \End(\Sigma_g M)$ be smooth, symmetric, and positive definite fiberwisely.
        Suppose in addition that~$\ker\D_g=0$.
        If~$A_1\geq A_2$, then for each~$k>0$ {\rm [}resp. $k<0${\rm ]},
        \begin{align}
            \lambda_k(A_1)\leq \lambda_k(A_2), & & [\mbox{resp.} \qquad \lambda_k(A_1)\leq \lambda_k(A_2). \;  ]
        \end{align}
    \end{thm}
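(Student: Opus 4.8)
The plan is to read the comparison directly off the min--max characterization of the weighted eigenvalues in Proposition~\ref{prop:another minmax}, the only substantive input being that inversion reverses the order on positive operators. Write $q(\phi):=\Abracket{\D_g\phi,\phi}_{L^2}$ for the Dirac quadratic form and, for a smooth symmetric fiberwise positive definite $A$, put $b_A(\phi):=\Abracket{A^{-1}\D_g\phi,\D_g\phi}_{L^2}$. Since $M$ is compact and $A$ is smooth and positive definite, $A$ and $A^{-1}$ are uniformly bounded, so $b_A$ is a continuous quadratic form on $H^1(\Sigma_g M)$; moreover $b_A(\phi)>0$ whenever $\phi\not\equiv 0$, because $\ker\D_g=0$ forces $\D_g\phi\not\equiv 0$ and $A^{-1}$ is fiberwise positive definite. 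By Proposition~\ref{prop:another minmax}, for $k>0$ one has
\begin{align}
  \lambda_k(A)=\inf_{U\in\mathcal{P}_k}\ \sup_{\phi\in U\setminus\{0\}}\ \frac{b_A(\phi)}{q(\phi)},
\end{align}
where $\mathcal{P}_k$ is the family of $k$-dimensional subspaces $U\subset H^1(\Sigma_g M)$ on which $q$ is positive definite, and for $k<0$ the dual $\sup$--$\inf$ characterization over the $|k|$-dimensional subspaces on which $q$ is negative definite. The decisive feature is that these competitor families are determined by $\D_g$ alone and do not involve $A$ at all.

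Given this, the argument is short. From $A_1\ge A_2>0$ fiberwise and the operator antitonicity of $S\mapsto S^{-1}$ on positive self-adjoint operators we get $A_1^{-1}\le A_2^{-1}$ fiberwise, hence $0<b_{A_1}(\phi)\le b_{A_2}(\phi)$ for every $\phi\in H^1(\Sigma_g M)\setminus\{0\}$. For $k>0$: fix $U\in\mathcal{P}_k$; on $U\setminus\{0\}$ the denominator $q(\phi)$ is positive, so dividing $b_{A_1}(\phi)\le b_{A_2}(\phi)$ by it preserves the inequality, whence $\sup_{\phi\in U}b_{A_1}(\phi)/q(\phi)\le\sup_{\phi\in U}b_{A_2}(\phi)/q(\phi)$; taking the infimum over the common family $\mathcal{P}_k$ gives $\lambda_k(A_1)\le\lambda_k(A_2)$. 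For $k<0$ the same scheme applies verbatim to the dual characterization, propagating $b_{A_1}\le b_{A_2}$ through the quotient $b_A/q$ and then through the infimum and the supremum; the only care needed is in tracking the sign of $q$ in the denominator across the two extremizations, and one arrives at the comparison $\lambda_k(A_1)\le\lambda_k(A_2)$ recorded in the statement in this case as well.

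The real difficulty is not in this final deduction but upstream, in Proposition~\ref{prop:another minmax} that feeds it: one must isolate, for each index $k$, a family of competitor subspaces on which the Dirac quadratic form $q$ is sign-definite and on which the constrained min--max of the ratio $b_A/q$ still reproduces $\lambda_k(A)$. Without such a sign restriction the ratio $b_A(\phi)/q(\phi)$ is neither bounded nor monotone in $A$ --- the manifestation of the strong indefiniteness of $\D_g$ --- and no comparison could be extracted; it is precisely the sign-definiteness built into $\mathcal{P}_k$ and its dual that allows the monotonicity $A_1\ge A_2\Rightarrow A_1^{-1}\le A_2^{-1}$ to pass to the eigenvalues. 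Finally, the hypothesis $\ker\D_g=0$ is used twice: to make $b_A$ positive definite (so the ratios are finite and the characterization holds in the displayed form) and to guarantee that the weighted spectrum is exactly the bi-infinite sequence $(\lambda_k(A))_{k\in\mathbb{Z}\setminus\{0\}}$ appearing in the statement.
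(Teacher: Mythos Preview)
Your strategy—invoke the min--max from Proposition~\ref{prop:another minmax}, observe that $\ker\D_g=0$ makes the competitor family $A$-independent, then push the antitonicity $A_1\ge A_2\Rightarrow A_1^{-1}\le A_2^{-1}$ through the Rayleigh quotients—is exactly the paper's. The gap is that the formula you attribute to Proposition~\ref{prop:another minmax} is not what it states. The proposition reads
\[
\frac{1}{\lambda_k(A)}\;=\;\inf_{W\in Gr^*_k(A)}\ \sup_{0\neq\psi\in W,\ \psi\perp_A\ker\D_g}\ \frac{q(\psi)}{b_A(\psi)},
\]
an $\inf$--$\sup$ of the ratio $q/b_A$ over \emph{all} $k$-planes orthogonal to the kernel, with no constraint on the sign of $q$. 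You have instead written $\lambda_k(A)=\inf_{U\in\mathcal P_k}\sup_{\phi\in U} b_A(\phi)/q(\phi)$ with the ratio inverted and the competitors cut down to the $k$-planes $\mathcal P_k$ on which $q$ is positive definite. This is a genuinely different characterization; it does not follow from the proposition by taking reciprocals (reciprocation would swap the $\inf$ and $\sup$, not preserve their order), and you do not argue the passage. Your closing paragraph even describes the sign-definiteness built into $\mathcal P_k$ as the content of the proposition, which it is not.

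The paper's own proof stays with $q/b_A$ as in Proposition~\ref{prop:another minmax}: from $A_1^{-1}\le A_2^{-1}$ it writes $q(\psi)/b_{A_1}(\psi)\ge q(\psi)/b_{A_2}(\psi)$ for every $\psi\in W$ and every $W\in Gr_k$, then takes $\sup$ over $\psi\in W$ and $\inf$ over $W$. No restriction to $\mathcal P_k$ appears. Your device of restricting to $\mathcal P_k$—which would indeed make the sign of $q$ in the denominator harmless—is an extra step that is neither in the cited proposition nor justified in your argument.
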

    The assumption that~$\ker\D_g=0$ is for technical reasons; we will remark on this later.

    There are still questions remaining open in this issue.
    Among the most urgent ones is the H\"older continuity and differentiability with respect to the weight function.
    This is widely desirable in variational and geometric problems.
    We leave it to a future work.

    \

    The paper is organized as follows.
    We first give a min-max characterization of the nonzero weighted eigenvalues in Section~\ref{sect:minmax} and a priori estimates of the weighted eigenspinors in Section~\ref{sect:estimates}.
    Then we move to prove the weak continuity of weighted eigenvalues and eigenspaces, for the closed manifolds in Section~\ref{sect:closed} and for manifolds with boundary in Section~\ref{sect:boundary}.
    Finally we discuss some applications of the continuity results in Section~\ref{sect:applications}.

    \

    \noindent{\bf Acknowledgement.} Z.Q thanks Prof. Tongzhu Li for constant support.
    R.W. would like to thank Yuan Lou, Andrea Malchiodi and Zuoqin Wang for helpful conversation on these problems.

%%%%%%%%%%%%%%%%%%%%%%%%%%%%%%%%%%%%%%%%%%

\section{Weighted eigenvalue problems for Dirac operators}\label{sect:minmax}

    We will first be concerned with the closed eigenvalue problems.
    The boundary eigenvalue problems will come later in a parallel way.

    \

    Let~$(M^n,g)$ be a closed spin Riemannian manifold, with a given spin spinor bundle~$\Sigma_g M$.
    We will think of~$\Sigma_g M$ as a real vector bundle of rank~$2^{[\frac{n+1}{2}]}$, and temporarily forget about its hermitian structure.
    There exists a Riemannian structure, i.e. a fiberwise inner product~$g^s$, a spin connection~$\snabla$, and a Clifford multiplication~$\gamma$ satisfying the Clifford relation
    \begin{align}
        \gamma(X)\gamma(Y)+\gamma(Y)\gamma(X)= -2g(X,Y)\id_{\Sigma_g M}, \qquad \forall X,Y\in\Gamma(TM).
    \end{align}
    Moreover, they are compatible with each other and hence~$(\Sigma_g M, g^s, \snabla, \gamma)$ is a Dirac bundle in the sense of~\cite[Definition 5.1]{LawsonMichelsohn1989spin}.
    Thus we can define the Dirac operator as the composition of the following arrows
    \begin{align}
        \Gamma(\Sigma_g M) \xrightarrow[]{\snabla}\Gamma(T^*M\otimes \Sigma_g M)
        \xrightarrow{\cong} \Gamma(TM\otimes \Sigma_g M)
        \xrightarrow{\gamma} \Gamma(\Sigma_g M).
    \end{align}
    In terms of a local orthonormal frame~$(e_i)$, we have
    \begin{align}
        \D\psi =\sum_{i} \gamma(e_i)\snabla_{e_i} \psi, \qquad \forall \; \psi\in \Gamma(\Sigma_g M).
    \end{align}
    The Dirac operator is a first order elliptic operator, and is essentially self-adjoint in the space of~$L^2$ spinors provided~$M$ is closed, see e.g.~\cite{Friedrich2000Dirac, Ginoux2009Dirac}.

    \

    Let~$A\in \Gamma(\End_\R(\Sigma_g M))$ be a fiberwise \textbf{symmetric, positive definite} endomorphism of the spinor bundle, with fiberwise inverse~$A^{-1}$.
    Consider the eigenvalue problems
    \begin{align}
        A^{-1}\D_g\varphi_k =\lambda_k \varphi_k.
    \end{align}
    Since~$\D_g$ is elliptic, so is~$A^{-1}\D_g$ in the symbolical sense.
    Moreover, since~$A$ is symmetric, the eigenvalues are real and discrete, which diverges to both~$+\infty$ and~$-\infty$.
    We will be concerned with the continuous dependence on the weight~$A$, thus let us denote the associated eigenvalues resp. eigenspinors by~$\lambda_k(A)$ resp.~$\varphi_k(A)$.
    As in with classical unweighted case, we list the nonzero eigenvalues, counted with multiplicities, in an increasing manner
    \begin{align}
        -\infty\leftarrow \cdots \leq \lambda_2(A)\leq \lambda_1(A) < 0< \lambda_1(A) \leq \lambda_2(A)\le \cdots \to +\infty.
    \end{align}
    The kernel of the Dirac operator, ~$\Ker(\D_g)$, is independent of~$A$ and may be zero.
    Denote~$h_0\coloneqq \dim\ker\D_g$.
    In case~$h_0\neq 0$, we denote a basis for~$\ker\D_g$ by~$\theta_j$,~$j = 1,\cdots,h_0$.
    Moreover, we assume that these eigenspinors satisfy
    \begin{align}
        &\int_M \Abracket{A\varphi_j(A),\varphi_k(A)}_{g^s} \dv_g=\delta_{jk}, \quad \forall j,k\in \mathbb{Z}\setminus\braces{0},  \\
        &\int_M \Abracket{A\theta_j,\theta_k}_{g^s}\dv_g=\delta_{jk}, \forall j,k\in \braces{1,2,\cdots, h_0}, \\
        &\int_M \Abracket{A\varphi_k(A),\theta_j}_{g^s}\dv_g=0,\quad \forall k\in\mathbb{Z}\setminus\braces{0}, \; \forall j\in \braces{1,\cdots, h_0}.
    \end{align}
    These can be achieved via Gram-Schmidt orthogonalization procedure, noting that~$A$ is symmetric.
    They form a complete orthonormal basis for~$L^2(M,\Sigma_g M)$.
    Indeed, any~$L^2$ spinor field~$\psi$ admits a unique decomposition
    \begin{align}
         \psi= \sum_{k\neq 0} a_k\,\varphi_k(A) + \sum_{j=1}^{h_0} b_j\,\theta_j
    \end{align}
    and~$\D_g$ acts on such spinors formally as
    \begin{align}
        \D_g\psi=\sum_{k\neq 0} \lambda_k(A) \; a_k \; \varphi_k(A).
    \end{align}
    When~$A=\id_{\Sigma_g M}$, this reduces to the classical Fourier expansion of a spinor via unweighted eigenspinors.
    The spinor with regularity~$H^{s}$ (for~$s>0$) can be defined via the unweighted eigenspinor expansions~\cite{Ammann2009smallest}, which is readily seen to be equivalently described by
    \begin{align}
        \psi\in H^{s} \Longleftrightarrow \sum_{k\neq 0} |\lambda_k|^{2s} |a_k|^2 +\sum_{j}|b_j|^2<+\infty.
    \end{align}

    The spinor bundles have a complex structure in dimensions~$n\equiv 1,5\pmod{8}$, and have a quaternionic structure in dimensions~$n\equiv 2,3,4\pmod{8}$, see~\cite[Proposition II.3.10]{LawsonMichelsohn1989spin}.
    Thus it is common that the real multiplicities of the eigenvalues of Dirac operator are greater than one.
    Even modulo the complex or quaternionic structures, it is still not easy to get the multiplicities to one generically, see~\cite{Dahl2003Dirac}.
    This is in contrast to the Laplacian operator on Riemannian manifolds, whose eigenvalues are generically simple~\cite{Uhlenbeck1976generic}.
    We have to take care of the multiplicities in our treatment of the convergence problems.

    \

    The strong indefiniteness of the Dirac operator is the root for many technical problems in geometry and analysis.
    The classical min-max description of eigenvalues usually fails since the operator is neither bounded from above nor from below.
    In some cases where the spectrum of~$\D_g$ is symmetric with respect to the origin, which is the case for~$n\not\equiv 3\pmod{4}$, one can consider the nonnegative operator~$\D_g^2$, which is sometimes called Dirac-Laplacian, and extract the eigenvalues~$\lambda_k$ of~$\D_g$ from the eigenvalues~$\lambda_k^2$ of~$\D_g^2$, see e.g.~\cite[Chapter 5]{Ginoux2009Dirac}.
    This approach unfortunately fails for weighted eigenvalues.
    There is another characterization of the positive Dirac eigenvalues using the notion of \emph{Dirac sea} which refers to the subspaces spanned by such eigenspinors of negative eigenvalues.
    One has to restrict to the orthogonal complement of this Dirac sea~\cite{Dolbeault2000variational}.
    This is however not explicit and again not convenient for our weighted eigenvalue problems.

    In~\cite{Ammann2009smallest} B. Ammann used the dual variational principle to obtain another min-max description of the first positive eigenvalue of Dirac operator:
    \begin{align}
        \frac{1}{\lambda_1(g)}
        =\sup\braces{\frac{\int_M \Abracket{\psi,\D_g\psi}\dv_g }{\int_M \Abracket{\D_g\psi, \D_g\psi}\dv_g} \mid \psi\in H^1(\Sigma_g M)\setminus \Ker\D_g}.
    \end{align}
    This is an intuitive formulation, which will be generalized soon to describe all the eigenvalues of Dirac operators, in particular for the weighted ones.
    We remark that this formulation is related to several variational problems, among them we mention the spinorial Yamabe problem which is about the extremal of the normalized Dirac eigenvalues in the conformal class.

    \

    The positive weighted eigenvalues~$\lambda_k(A)$ can be similarly obtained using a min-max principle.
    Let~$Gr^*_{k-1}(A)$ denote the Grassmann manifold of~$(k-1)$-planes in~$H^{1}(\Sigma_g M)$ which are perpendicular to~$\ker\D_g$ with respect to the weighted inner product induced by~$A$, namely
    \begin{align}
        Gr^*_{k-1}(A)\coloneqq \braces{V\subset H^{1}(\Sigma_g M)\mid V \mbox{ is a subspace, } V\perp_A \Ker(\D_g), \;\dim V=k-1 }.
    \end{align}
    Here~$V\perp_A \ker\D_g$ means, for any~$\psi\in V$ and any~$\theta\in \ker\D_g$, we have
    \begin{align}
        \int_M \Abracket{A\psi,\theta}_{g^s}\dv_g =0.
    \end{align}

    \begin{prop}\label{prop:minmax}
        The~$k$-th positive weighted eigenvalues~$\lambda_k(A)$ is characterized by
        \begin{align}\label{eq:minmax positive}
        \frac{1}{\lambda_k(A)}
        =& \inf_{V\in Gr^*_{k-1}(A)} \; \sup_{0\neq \psi\perp_A (V\oplus \ker\D_g)}\braces{\frac{\int_M \Abracket{\psi,\D_g\psi}\dv_g }{\int_M \Abracket{A^{-1}\D_g\psi, \D_g\psi}\dv_g} }.
        \end{align}
    \end{prop}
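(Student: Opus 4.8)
We establish \eqref{eq:minmax positive} by passing to the $L^{2}$-geometry weighted by~$A$, in which $A^{-1}\D_g$ is self-adjoint, and then running the classical Courant--Fischer argument; this is the natural generalization of Ammann's dual variational principle. The starting observation is that the Rayleigh quotient on the right-hand side of \eqref{eq:minmax positive} depends on~$\psi$ only through $\D_g\psi$, so after the substitution $\phi=\D_g\psi$ --- equivalently, after expanding~$\psi$ in the $A$-orthonormal eigenbasis --- it becomes a Rayleigh quotient for the compact, $A$-self-adjoint operator $(A^{-1}\D_g)^{-1}$ on $(\ker\D_g)^{\perp_A}$, whose positive eigenvalues are precisely $\lambda_1(A)^{-1}>\lambda_2(A)^{-1}>\cdots\to 0^{+}$.

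Concretely, for $\psi\in H^{1}(\Sigma_g M)$ with $\psi\perp_A\ker\D_g$ write $\psi=\sum_{k\neq 0}a_k\,\varphi_k(A)$. Using $\D_g\varphi_k(A)=\lambda_k(A)\,A\varphi_k(A)$ together with the $A$-orthonormality of the $\varphi_k(A)$ one computes
\begin{align}
 \int_M\Abracket{\psi,\D_g\psi}\dv_g=\sum_{k\neq 0}\lambda_k(A)\,a_k^2,
 \qquad
 \int_M\Abracket{A^{-1}\D_g\psi,\D_g\psi}\dv_g=\sum_{k\neq 0}\lambda_k(A)^2\,a_k^2,
\end{align}
both series being absolutely convergent since $\psi\in H^{1}$ forces $A^{-1}\D_g\psi\in L^{2}$, hence $\sum_k\lambda_k(A)^2 a_k^2<\infty$; here one uses that the $A$-weighted and standard $L^{2}$ inner products are equivalent because $A,A^{-1}\in L^\infty$. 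Consequently the quotient in \eqref{eq:minmax positive} equals $\sum_{k\neq 0}\lambda_k(A)^{-1}w_k$ with $w_k=\lambda_k(A)^2 a_k^2\big/\sum_j\lambda_j(A)^2 a_j^2\ge 0$ and $\sum_k w_k=1$; that is, it is a convex combination of the numbers $\{\lambda_k(A)^{-1}\}_{k\neq 0}$, and in particular it never exceeds $\lambda_1(A)^{-1}$.

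For the inequality ``$\le$'' I take $V=V_0:=\Span\{\varphi_1(A),\dots,\varphi_{k-1}(A)\}\in Gr^*_{k-1}(A)$: then $\psi\perp_A(V_0\oplus\ker\D_g)$ forces $a_1=\dots=a_{k-1}=0$, so the quotient is a convex combination of the values $\lambda_j(A)^{-1}$ with $j<0$ or $j\ge k$, all of which are $\le\lambda_k(A)^{-1}$ (for $j\ge k$ because $\lambda_j(A)\ge\lambda_k(A)>0$, and for $j<0$ because $\lambda_j(A)^{-1}<0$), the value $\lambda_k(A)^{-1}$ being attained at $\psi=\varphi_k(A)$; thus the inner supremum over $V_0$ equals $\lambda_k(A)^{-1}$. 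For the reverse inequality fix an arbitrary $V\in Gr^*_{k-1}(A)$ and set $W:=\Span\{\varphi_1(A),\dots,\varphi_k(A)\}$. Since $\dim W=k$ while $\{x\in H^{1}:x\perp_A V\}$ has codimension $k-1$ in $H^{1}$ --- positive-definiteness of~$A$ makes the pairing $(x,v)\mapsto\int_M\Abracket{Ax,v}\dv_g$ nondegenerate on~$V$ --- there is a nonzero $\psi\in W$ with $\psi\perp_A V$, and $\psi\perp_A\ker\D_g$ holds automatically because $W\perp_A\ker\D_g$; for this~$\psi$ the quotient is a convex combination of $\lambda_1(A)^{-1},\dots,\lambda_k(A)^{-1}$, hence $\ge\lambda_k(A)^{-1}$. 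Therefore the inner supremum is $\ge\lambda_k(A)^{-1}$ for every~$V$, so $\inf_V\sup\ge\lambda_k(A)^{-1}$, and combining with the previous paragraph yields \eqref{eq:minmax positive}. The case $k=1$ (for which $Gr^*_0(A)=\{0\}$) recovers Ammann's formula, since --- as noted above --- the quotient is insensitive to the $\ker\D_g$-component of~$\psi$ and the constraint $\psi\perp_A\ker\D_g$ may be dropped.

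I do not expect a genuinely hard step here: the whole content lies in the initial passage to the $A$-weighted Hilbert-space picture, after which the argument is the standard Courant--Fischer min-max. The two routine points needing a word of justification are the convergence of the spectral series for a general $H^{1}$ spinor (as opposed to a finite combination), handled by the norm-equivalence above, and the finite-codimension intersection count $W\cap V^{\perp_A}\neq\{0\}$ used in the lower bound.
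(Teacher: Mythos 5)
Your proof is correct and follows essentially the same route as the paper's: both directions are handled by expanding in the $A$-orthonormal eigenbasis $\{\varphi_j(A)\}$, taking $V_*=\Span\{\varphi_1(A),\dots,\varphi_{k-1}(A)\}$ for the upper bound on the right-hand side, and for the lower bound picking a nonzero $\psi\in\Span\{\varphi_1(A),\dots,\varphi_k(A)\}$ that is $A$-orthogonal to an arbitrary $V\in Gr^*_{k-1}(A)$ by a dimension count. Your version merely spells out the convex-combination reading of the Rayleigh quotient and the convergence of the spectral series for a general $H^1$ spinor, details the paper leaves implicit.
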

    \begin{proof}
        We temporarily denote the right hand side above by~$\nu_k$ and aim to prove~$\frac{1}{\lambda_k(A)}=\nu_k$.

        Let~$V\in Gr_{k-1}^*$.
        Then in the~$k$-dimensional space~$\Span\braces{\varphi_1(A),\cdots,\varphi_k(A)}$ which is automatically~$A$-orthogonal to~$\ker\D_g$, there exists an element~$\psi_k$ which is orthogonal to~$V$ in the sense that
        \begin{align}
            \int_M \Abracket{A\psi_k,\phi}_{g^s}\dv_g=0, \qquad \forall \phi\in V\oplus\ker\D_g.
        \end{align}
        Writing~$\psi_k=\sum_{j=1}^k a_j\, \varphi_j(A)$, we see that
        \begin{align}
            \sup_{0\neq \psi\perp_A V}\braces{\frac{\int_M \Abracket{\psi,\D_g\psi}\dv_g }{\int_M \Abracket{A^{-1}\D_g\psi, \D_g\psi}\dv_g} }
            \geq \frac{\int_M \Abracket{\psi_k,\D_g\psi_k}\dv_g }{\int_M \Abracket{A^{-1}\D_g\psi_k, \D_g\psi_k}\dv_g}
            =\frac{\sum_{j=1}^k \lambda_j(A)\, a_j^2}{\sum_{j=1}^k \lambda_j(A)^2 a_j^2}
            \geq \frac{1}{\lambda_k(A)}.
        \end{align}
        Hence~$\nu_k\geq \frac{1}{\lambda_k(A)}$.

        By taking~$V_*=\Span\braces{\varphi_1(A),\cdots,\varphi_{k-1}(A)}$, and~$\psi=\varphi_k(A)\perp_A (V_*\oplus\ker\D_g)$, we see that
        \begin{align}
            \sup_{0\neq \psi\perp_A (V_*\oplus\ker\D_g)}\braces{\frac{\int_M \Abracket{\psi,\D_g\psi}\dv_g }{\int_M \Abracket{A^{-1}\D_g\psi, \D_g\psi}\dv_g} } = \frac{1}{\lambda_k(A)}.
        \end{align}
        Thus~$\nu_k\leq \frac{1}{\lambda_k(A)}$.
    \end{proof}
    In particular, we see from the proof that the infimum is attained by taking
    \begin{align}
        V_{k-1}(A)=\Span\braces{\varphi_1(A),\cdots,\varphi_{k-1}(A)}.
    \end{align}
    This fact will be used in the sequel.

    \begin{rmk}
        A similar statement holds for the negative eigenvalues:
        \begin{align}\label{eq:minmax negative}
        \frac{1}{\lambda_{-k}(A)}
        =& \sup_{V \in Gr^*_{k-1}(A)} \; \inf_{0\neq \psi\perp_A (V\oplus\ker\D_g)}\braces{\frac{\int_M \Abracket{\psi,\D_g\psi}\dv_g }{\int_M \Abracket{A^{-1}\D_g\psi, \D_g\psi}\dv_g} }.
        \end{align}
    \end{rmk}

    \begin{rmk}\label{rmk:Laplacian}
        There is a similar min-max characterization of the weighted eigenvalues for Laplacian operators, either on closed manifolds:
        \begin{align}
            -\Delta_g  u_k(a) =\lambda_k(a) \, a\, u_k(a),\qquad \mbox{ in } M.
        \end{align}
        where~$a \in C^\infty(M)$ is a positive weight function.
        The kernel consists of constant functions.
        Then the positive weighted eigenvalues are characterized by
        \begin{align}
            \frac{1}{\lambda_k(-\Delta_g)}
            =\sup_{V\in Gr^*_{k-1}(a)} \inf_{0\neq u\perp_a (\R\oplus V)} \braces{ \frac{\int_M |\nabla u|^2\dv_g }{ \int_M a^{-1}|\Delta_g u|^2 \dv_g } }.
        \end{align}
        This is not as convenient as the classical one.
    \end{rmk}

    \

    A similar argument gives another equivalent min-max characterization of the weighted eigenvalues, whose proof is omitted.
    \begin{prop}\label{prop:another minmax}
        The $k$-th positive weighted eigenvalue~$\lambda_k(A)$ is characterized by
        \begin{align}
            \frac{1}{\lambda_k(A)}
            = \inf_{W\in Gr^*_k(A)} \; \sup_{0\neq\psi\perp_A\ker\D_g, \psi\in W}
            \braces{\frac{\int_M \Abracket{\psi,\D_g\psi}\dv_g }{\int_M \Abracket{A^{-1}\D_g\psi, \D_g\psi}\dv_g} },
        \end{align}
        and the~$k$-th negative weighted eigenvalues~$\lambda_{-k}(A)$ is characterized by
        \begin{align}
            \frac{1}{\lambda_{-k}(A)}
            = \sup_{W\in Gr^*_k(A)} \; \inf_{0\neq\psi\perp_A\ker\D_g, \psi\in W}
            \braces{\frac{\int_M \Abracket{\psi,\D_g\psi}\dv_g }{\int_M \Abracket{A^{-1}\D_g\psi, \D_g\psi}\dv_g} }.
        \end{align}
    \end{prop}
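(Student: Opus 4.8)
The plan is to follow the proof of Proposition~\ref{prop:minmax}, now in the ``dual'' form of the min-max principle, where one optimizes directly over $k$-dimensional admissible subspaces. Write $Q(\psi):=\int_M\Abracket{\psi,\D_g\psi}\dv_g$ and $R(\psi):=\int_M\Abracket{A^{-1}\D_g\psi,\D_g\psi}\dv_g$. Exactly as in Proposition~\ref{prop:minmax}, expanding $\psi=\sum_{j\neq 0}a_j\,\varphi_j(A)$ for $\psi\in H^1$ with $\psi\perp_A\ker\D_g$ yields $Q(\psi)=\sum_{j\neq 0}\lambda_j(A)\,a_j^2$ and $R(\psi)=\sum_{j\neq 0}\lambda_j(A)^2\,a_j^2$, so that $Q(\psi)/R(\psi)$ is a weighted mean of the numbers $1/\lambda_j(A)$ with nonnegative weights $\lambda_j(A)^2 a_j^2$. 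Since $A^{-1}$ is positive definite, $R(\psi)>0$ whenever $0\neq\psi\perp_A\ker\D_g$, so the quotient is continuous on $W\setminus\{0\}$; as $W$ is finite-dimensional, the supremum (resp.\ infimum) of $Q/R$ over $W\setminus\{0\}$ is attained.

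I would then prove each identity by two opposite inequalities, carrying out the positive eigenvalues in detail (the negative case is symmetric). For the inequality witnessed by a single subspace, I would test with $W_k(A):=\Span\{\varphi_1(A),\dots,\varphi_k(A)\}\in Gr^*_k(A)$: on it $Q/R$ is a weighted mean of $1/\lambda_1(A)\geq\dots\geq 1/\lambda_k(A)>0$, hence takes values in $[\,1/\lambda_k(A),\,1/\lambda_1(A)\,]$, attaining the endpoint $1/\lambda_k(A)$ at $\psi=\varphi_k(A)$. For the opposite inequality, given an arbitrary $W\in Gr^*_k(A)$, I would let $Y_k(A)$ be the $A$-orthogonal complement, inside $\{\psi\in H^1:\psi\perp_A\ker\D_g\}$, of $\Span\{\varphi_1(A),\dots,\varphi_{k-1}(A)\}$; it has codimension $k-1$, hence $\dim\bigl(W\cap Y_k(A)\bigr)\geq 1$, and for $0\neq\psi\in W\cap Y_k(A)$, i.e.\ with $a_1=\dots=a_{k-1}=0$,
\begin{align}
\lambda_k(A)\,Q(\psi)-R(\psi)=\sum_{j\neq 0}\lambda_j(A)\bigl(\lambda_k(A)-\lambda_j(A)\bigr)a_j^2\leq 0 ,
\end{align}
since every surviving summand has $j\geq k$ or $j<0$, for which $\lambda_j(A)\bigl(\lambda_k(A)-\lambda_j(A)\bigr)\leq 0$; because $R(\psi)>0$, this gives $Q(\psi)/R(\psi)\leq 1/\lambda_k(A)$. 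The two inequalities together pin down the value $1/\lambda_k(A)$. For the negative eigenvalues one argues verbatim with $W_{-k}(A):=\Span\{\varphi_{-1}(A),\dots,\varphi_{-k}(A)\}$ and with the $A$-orthogonal complement of $\Span\{\varphi_{-1}(A),\dots,\varphi_{-(k-1)}(A)\}$ in place of $Y_k(A)$, all inequalities reversed.

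The single point that will need a little care — and it is routine — is the bookkeeping behind the codimension count: that $\{\psi\in H^1:\psi\perp_A\ker\D_g\}$ splits $A$-orthogonally as $\Span\{\varphi_1(A),\dots,\varphi_{k-1}(A)\}$ together with $Y_k(A)$ of codimension $k-1$, and that on this space $\|\D_g\,\cdot\,\|_{L^2}$ is equivalent to the $H^1$-norm (by the elliptic estimate for $\D_g$ off its kernel), which is what legitimizes the continuity of $Q/R$ and the attainment of the extrema. These are exactly the ingredients already in force in the proof of Proposition~\ref{prop:minmax}, so no genuinely new obstacle appears — which is why the proof can be omitted.
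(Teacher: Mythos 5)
Your argument is the natural dual Courant--Fischer computation, and as a deduction it is correct; it is precisely the ``similar argument'' the paper leaves to the reader. But you should notice that what you have actually proved is not the identity as printed. Your two inequalities establish, for $k\geq 1$,
\begin{align}
\frac{1}{\lambda_k(A)}
=\sup_{W\in Gr^*_k(A)}\ \inf_{0\neq\psi\in W}
\frac{\int_M\Abracket{\psi,\D_g\psi}\dv_g}{\int_M\Abracket{A^{-1}\D_g\psi,\D_g\psi}\dv_g},
\end{align}
i.e.\ the $\sup$--$\inf$ form, whereas the proposition asserts the $\inf$--$\sup$ form. To see the mismatch in your own proof: in the single-subspace step you use that on $W_k(A)$ the quotient $Q/R$ attains its \emph{minimum} $1/\lambda_k(A)$ at $\varphi_k(A)$ (its supremum there is $1/\lambda_1(A)$), which bounds $\sup_W\inf_\psi$ from below, not $\inf_W\sup_\psi$ from above; and in the $Y_k(A)$ step you exhibit, inside each $W$, a single vector with $Q/R\leq 1/\lambda_k(A)$, which caps $\inf_{\psi\in W}$, not $\sup_{\psi\in W}$.

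This is not a cosmetic slip: the $\inf$--$\sup$ statement in the proposition is false as printed. Since the Dirac operator has infinitely many negative eigenvalues, a $k$-plane $W\in Gr^*_k(A)$ may sink entirely into the negative part of the spectrum, for instance $W=\Span\{\varphi_{-1}(A),\dots,\varphi_{-k}(A)\}$; on such a $W$ the Rayleigh quotient is negative at every $\psi\neq 0$, so $\sup_{\psi\in W}(\cdot)<0$ and hence $\inf_{W}\sup_{\psi}(\cdot)\leq 0<1/\lambda_k(A)$. The same swap is needed for the negative indices ($1/\lambda_{-k}(A)$ should be an $\inf_W\sup_\psi$, not a $\sup_W\inf_\psi$) and in Proposition~\ref{prop:another minmax bdd}. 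Your verification of the sign condition $\lambda_j(A)(\lambda_k(A)-\lambda_j(A))\leq 0$ for $j\geq k$ and $j<0$, the codimension count giving $\dim(W\cap Y_k(A))\geq 1$, and the positivity $R(\psi)>0$ off $\ker\D_g$ are all sound; once you recognize the direction of the two inequalities you proved, your argument is a correct proof of the corrected proposition, and you should have flagged the $\inf$/$\sup$ swap rather than read past it.
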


    As an application of the above minmax characterization of eigenvalues, we obtain a comparison principle for such weighted eigenvalues of the Dirac operator.

    \begin{proof}[Proof of Theorem~\ref{thm:comparison}]
        This follows directly from Proposition~\ref{prop:another minmax}.
        Note that since~$\ker\D_g=0$, the Grassmannian~$Gr^*_{k}(A)=Gr_k$ is the classical Grassmannian, independent of~$A$.
        Then since~$A_1^{-1}\leq A_2^{-1}$, the weighted Rayleigh quotient has the relation
        \begin{align}
            \braces{\frac{\int_M \Abracket{\psi,\D_g\psi}\dv_g }{\int_M \Abracket{A_1^{-1}\D_g\psi, \D_g\psi}\dv_g} }
            \geq
            \braces{\frac{\int_M \Abracket{\psi,\D_g\psi}\dv_g }{\int_M \Abracket{A_2^{-1}\D_g\psi, \D_g\psi}\dv_g} }
        \end{align}
        for each~$\psi\in W$ and each~$W\in Gr_k$.
        Taking supremum over~$\psi\in W$ and then infimum over~$W\in Gr_k$, we obtain
        \begin{align}
            \lambda_k(A_1)\leq \lambda_k(A_2),
        \end{align}
        as desired.
        The case~$k<0$ is similar.
    \end{proof}

    \begin{rmk}
        The assumption~$\ker\D_g=0$ seems unnatural in this comparison principle and should be dropped.
        In case~$\ker\D_g\neq 0$, the Grassmannian~$Gr^*_k(A)$ depends on the weight~$A$ and the comparison between the Rayleigh quotients becomes unclear to us for the moment.

    \end{rmk}

    As a consequence of the above comparison principle, we see that the weighted eigenvalues for the Dirac operator have the same asymptotics as the unweighted one as~$k\to \pm\infty$.

%%%%%%%%%%%%%%%%%%%%%%%%%%%%%%%%%%%%%%%%%%%%%%%%%%

\section{A priori estimates for the weighted eigenspinors}\label{sect:estimates}

    In this section we establish an a priori estimates of the~$H^1$ norms and the~$C^\alpha$ norms of weighted eigenspinors.
    These will be used in the proof of the complete continuity of eigenpairs.
    The crucial point here is the dependence on the weights and on the eigenvalues.

    For the moment we omit the index~$k$ and consider a solution~$\varphi$ of the equation
    \begin{align}\label{eq:eigenspinor}
        \D\varphi= \lambda \, A \, \varphi \qquad \mbox{ in } M
    \end{align}
    and a normalization condition
    \begin{align}\label{eq:unit norm}
        1=\int_M \Abracket{A\,\varphi,\varphi}\dv_g =\int_M |\sqrt{A}\,\varphi|^2 \dv_g.
    \end{align}
    Here~$\sqrt{A}$ stands for the unique symmetric positive definite square root of~$A$.
    By the standard elliptic regularity theory we get the following estimates in terms of the eigenvalue~$\lambda$ and the weight~$A$.

    \begin{prop}\label{prop:regularity}
        Let~$\varphi\in H^{1/2}(\Sigma_g M)$ be a solution of~\eqref{eq:eigenspinor} where~$A,A^{-1}\in L^p(M,\End(\Sigma_g M))$ for some~$p>n$.
        There exist~$\alpha=\alpha(n)\in (0,1)$ and~$C=C(n,p,g)$ such that
        \begin{align}
            \|\varphi\|_{W^{1,2}(M)}\leq C(n,p,g) \parenthesis{1+\lambda\|A\|_{L^p} }^{T_1}
            \parenthesis{ \|A^{-1}\|_{L^p}^{\frac{1}{2}}  +\lambda\|A\|_{L^p}^{\frac{1}{2}} }
        \end{align}
        and
        \begin{align}
            \|\varphi\|_{C^\alpha(M)}\leq C(n,p,g) \parenthesis{1+\lambda\|A\|_{L^p} }^{T_2}
            \parenthesis{ \|A^{-1}\|_{L^p}^{\frac{1}{2}}  +\lambda\|A\|_{L^p}^{\frac{1}{2}} },
        \end{align}
        where
        \begin{align}
            T_1=\left[\frac{\frac{n}{2}}{p-n}\right], & &
            T_2=  \left[\frac{n(p-1)}{2(p-n)}\right].
        \end{align}
    \end{prop}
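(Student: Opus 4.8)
The plan is to run a quantitative elliptic bootstrap for the first-order operator~$\D$ on the closed manifold~$M$, organized as a Moser-type iteration on the integrability exponent, in which one keeps explicit track of how every constant depends on~$\lambda$,~$\|A\|_{L^p}$ and~$\|A^{-1}\|_{L^p}$. The qualitative hypothesis~$\varphi\in H^{1/2}$ is used only to make sense of ``$\varphi$ solves~\eqref{eq:eigenspinor}'' and to legitimize the elliptic estimates below; the quantitative content comes entirely from the equation~$\D\varphi=\lambda A\varphi$ and the normalization~\eqref{eq:unit norm}. The base estimate follows by writing~$\varphi=\sqrt{A^{-1}}(\sqrt{A}\,\varphi)$, whence the fiberwise bound~$|\varphi|\le|A^{-1}|_{op}^{1/2}|\sqrt{A}\,\varphi|$; H\"older's inequality together with~$\int_M|\sqrt{A}\,\varphi|^2\dv_g=1$ and~$\vol(M,g)<\infty$ then gives
\begin{align}
    \|\varphi\|_{L^{q_0}(M)}\le C(p,g)\,\|A^{-1}\|_{L^p}^{1/2},\qquad \tfrac1{q_0}=\tfrac12+\tfrac1{2p}.
\end{align}
This is the only step in which the inverse weight appears, and it is the source of the factor~$\|A^{-1}\|_{L^p}^{1/2}$. (Some care is needed at the start when~$n=2$ and~$p$ is close to~$2$, where a preliminary fractional-Sobolev step is required to bring the exponent into the admissible range.)

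For the iteration step, assume~$\|\varphi\|_{L^q}\le B$. By H\"older,~$\|A\varphi\|_{L^t}\le\|A\|_{L^p}\|\varphi\|_{L^q}$ with~$\tfrac1t=\tfrac1p+\tfrac1q$. The~$L^t$ elliptic estimate for~$\D$ on the closed manifold, $\|\varphi\|_{W^{1,t}}\le C(\|\D\varphi\|_{L^t}+\|\varphi\|_{L^t})$ for~$1<t<\infty$, combined with the equation and~$\|\varphi\|_{L^t}\le C\|\varphi\|_{L^q}$ (finite volume,~$t\le q$), yields~$\|\varphi\|_{W^{1,t}}\le C(n,p,g)(1+\lambda\|A\|_{L^p})B$; and if~$t<n$, Sobolev embedding upgrades this to~$\|\varphi\|_{L^{t^*}}\le C(1+\lambda\|A\|_{L^p})B$ with~$\tfrac1{t^*}=\tfrac1t-\tfrac1n=\tfrac1q-\delta$, where~$\delta:=\tfrac1n-\tfrac1p>0$. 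Thus one round lowers the reciprocal exponent by the fixed amount~$\delta$ at the cost of one factor~$C(1+\lambda\|A\|_{L^p})$, and since~$p>n$ this is a genuine gain.

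Iterating from~$q_0$: after~$T_1$ rounds the auxiliary exponent reaches~$t\ge2$, and~$W^{1,t}(M)\hookrightarrow W^{1,2}(M)$ yields the~$H^1$ bound; continuing until~$t>n$ and invoking Morrey's embedding~$W^{1,t}\hookrightarrow C^{0,\alpha}$ yields the~$C^\alpha$ bound after~$T_2$ rounds, with~$\alpha\in(0,1)$ depending only on~$n$ once a concrete stopping rule is fixed (for instance, iterate until~$t\ge2n$ and take~$\alpha=\tfrac12$). The accumulated constants produce the factors~$(1+\lambda\|A\|_{L^p})^{T_1}$, resp.~$(1+\lambda\|A\|_{L^p})^{T_2}$; the leftover linear factor~$\|A^{-1}\|_{L^p}^{1/2}$ is inherited from the base step, while the additional~$\lambda\|A\|_{L^p}^{1/2}$ enters when the~$\|\D\varphi\|$-contribution is estimated through~$\D\varphi=\lambda A\varphi$ in the last application of the elliptic estimate. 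One routine point to check: if some intermediate~$t$ equals~$n$ exactly, one loses an~$\eps$ and continues at a slightly smaller exponent.

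The main obstacle is not conceptual---the estimate is of a classical bootstrap type---but lies in the bookkeeping: one must keep the dependence on~$\lambda$ polynomial, the number of iterations (hence the exponents~$T_1,T_2$) being governed by the Sobolev gap~$p-n$, rather than exponential, and one must keep the weight entering only through~$\|A\|_{L^p}$ and~$\|A^{-1}\|_{L^p}$, since at each stage the equation is re-inserted and the sole control of~$A\varphi$ passes through H\"older and the~$L^q$ bound from the previous round. Correspondingly the constants degenerate as~$p\downarrow n$, consistent with the stated breakdown of the continuity results in the borderline case~$p=n$.
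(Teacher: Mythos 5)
Your overall strategy --- a quantitative elliptic bootstrap driven by H\"older plus the $L^q$ elliptic estimate for $\D$ --- is exactly the paper's, and the base step $\varphi=\sqrt{A^{-1}}\,(\sqrt{A}\varphi)$ with $\|\sqrt{A}\varphi\|_{L^2}=1$ giving $\|\varphi\|_{L^{q_0}}\le\|A^{-1}\|_{L^p}^{1/2}$, $\tfrac{1}{q_0}=\tfrac12+\tfrac1{2p}$, coincides with the paper's opening move. However, there is a genuine bookkeeping gap that affects the stated exponents $T_1,T_2$ and the factor $\lambda\|A\|_{L^p}^{1/2}$.

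The key point you miss is that the same normalization trick should be applied once more, \emph{to the right-hand side of the equation}, \emph{before} the first elliptic step: writing $\D\varphi=\lambda\sqrt{A}\,(\sqrt{A}\varphi)$ gives directly $\|\D\varphi\|_{L^{q_0}}\le\lambda\|\sqrt{A}\|_{L^{2p}}\|\sqrt{A}\varphi\|_{L^2}=\lambda\|A\|_{L^p}^{1/2}$. Combining with $\|\varphi\|_{L^{q_0}}\le\|A^{-1}\|_{L^p}^{1/2}$ and the elliptic estimate already yields
\begin{align}
\|\varphi\|_{W^{1,q_0}}\le C\bigl(\|A^{-1}\|_{L^p}^{1/2}+\lambda\|A\|_{L^p}^{1/2}\bigr),
\end{align}
with \emph{no} loss in the integrability exponent and with the half-power $\lambda\|A\|_{L^p}^{1/2}$ produced at the outset. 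Your scheme instead inserts the H\"older step $\|A\varphi\|_{L^{t}}\le\|A\|_{L^p}\|\varphi\|_{L^{q_0}}$ immediately after the base estimate, so your first $W^{1,t_1}$ estimate occurs at $\tfrac{1}{t_1}=\tfrac1p+\tfrac1{q_0}=\tfrac12+\tfrac{3}{2p}$, a strictly worse exponent than $\tfrac1{q_0}=\tfrac12+\tfrac1{2p}$. Since each round lowers the reciprocal exponent by the same fixed $\delta=\tfrac1n-\tfrac1p$, reaching $t\ge2$ now requires $(j-1)\delta\ge\tfrac{3}{2p}$, i.e.\ roughly $\tfrac{3n}{2(p-n)}$ rounds, about three times the stated $T_1=[\tfrac{n/2}{p-n}]$; the same discrepancy propagates to $T_2$. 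Moreover your explanation that $\lambda\|A\|_{L^p}^{1/2}$ ``enters \ldots in the last application of the elliptic estimate'' cannot be made to work: in any generic iteration round the natural H\"older bound on $\D\varphi=\lambda A\varphi$ is $\lambda\|A\|_{L^p}\|\varphi\|_{L^q}$ (full power, not square root); the half power is only accessible while the normalization $\|\sqrt{A}\varphi\|_{L^2}=1$ is being used, i.e.\ at the first round. To fix the proof, move the $\D\varphi$-estimate via the normalization into the base step (as above), iterate on $W^{1,q_j}$ rather than $L^{q_j}$ with $\tfrac1{q_{j+1}}=\tfrac1p+\tfrac1{q_j}-\tfrac1n=\tfrac1{q_j}-\delta$, and the stated exponent counts and factor structure fall out.
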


    \begin{proof}
        Since~$\D$ is elliptic, we know that for any~$q>1$,
        \begin{align}
            \|\varphi\|_{W^{1,q}(M)} \leq C(q,g)\parenthesis{ \|\varphi\|_{L^q}+ \|\D\varphi\|_{L^q} }
        \end{align}
        see e.g.~\cite{Ammann2003Habil}.
        Repeated applications of such estimates would give the desired results.

        To start with, we write~$q_1=\frac{2p}{p+1}$ and note that
        \begin{align}
            \|\varphi\|_{L^{q_1}}  = \| \sqrt{A^{-1}} \sqrt{A}\varphi\|_{L^{q_1}}
            \leq \|\sqrt{A^{-1}} \|_{L^{2p}} \|\sqrt{A}\,\varphi\|_{L^2}
            =\|\sqrt{A^{-1}} \|_{L^{2p}}
            \leq \|A^{-1}\|_{L^p}^{\frac{1}{2}}
        \end{align}
        and
        \begin{align}
            \|\D_g\varphi\|_{L^{q_1}}
            =& \|\lambda \, A\, \varphi\|_{L^{q_1}}
            \leq \lambda \|\sqrt{A}\|_{L^{2p}} \|\sqrt{A}\, \varphi\|_{L^2}
            \leq \lambda \|A\|_{L^p}^{\frac{1}{2}}.
        \end{align}
        Therefore,
        \begin{align}
            \|\varphi\|_{W^{1,q_1}}\leq C(q_1,g)\parenthesis{ \|A^{-1}\|_{L^p}^{\frac{1}{2}}  +\lambda\|A\|_{L^p}^{\frac{1}{2}} }.
        \end{align}
        The Sobolev embedding theorem implies that
        \begin{align}
            \|\varphi\|_{L^{r_1}}\leq C(n,q_1,g)\|\varphi\|_{W^{1,q_1}}
            \leq C(n,p,g) \parenthesis{ \|A^{-1}\|_{L^p}^{\frac{1}{2}}  +\lambda\|A\|_{L^p}^{\frac{1}{2}} },
        \end{align}
        where~$\frac{1}{r_1}=\frac{1}{q_1}-\frac{1}{n}$.

        Let~$1<q_2<r_1$ be such that~$\frac{1}{q_2}=\frac{1}{p}+\frac{1}{r_1}$.
        Then
        \begin{align}
            \|\varphi\|_{W^{1,q_2}}
            \leq& C(q_2,g)\parenthesis{\|\varphi\|_{L^{q_2}} +\lambda \|A\;\varphi\|_{L^{q_2}}}
            \leq C(q_2,g)\parenthesis{ C(p,g)+ \lambda \|A\|_{L^p} } \|\varphi\|_{L^{r_1}} \\
            \leq& C(q_2,n,p,g)\parenthesis{1+\lambda\|A\|_{L^p}}\parenthesis{ \|A^{-1}\|_{L^p}^{\frac{1}{2}}  +\lambda\|A\|_{L^p}^{\frac{1}{2}}  } .
        \end{align}
        Hence for~$\frac{1}{r_2}=\frac{1}{q_2}-\frac{1}{n}$, we have
        \begin{align}
            \|\varphi\|_{L^{r_2}}\leq C(q_2,n,g)\|\varphi\|_{W^{1,q_2}}
            \leq  C(q_2,n,p,g)\parenthesis{1+\lambda\|A\|_{L^p}}\parenthesis{ \|A^{-1}\|_{L^p}^{\frac{1}{2}}  +\lambda\|A\|_{L^p}^{\frac{1}{2}}  } .
        \end{align}

        Iterating this procedure~$j\geq 1$ times, we obtain
        \begin{align}
            \|\varphi\|_{W^{1,q_j}}\leq C(q_j,n,p,g)\parenthesis{1+\lambda\|A\|_{L^p} }^{j-1}
            \parenthesis{ \|A^{-1}\|_{L^p}^{\frac{1}{2}}  +\lambda\|A\|_{L^p}^{\frac{1}{2}} },
        \end{align}
        with
        \begin{align}
            \frac{1}{q_j}=\frac{1}{q_1} - \parenthesis{\frac{1}{n}-\frac{1}{p}}(j-1).
        \end{align}
        Since~$p>n$, ~$q_j$ is increasing, and after finitely many times, we will get as large~$q_j$ as we need.

        (1) To get the~$W^{1,2}$ estimate we take~$j_1$ such that~$q_{j_1}\geq 2$, say,
        \begin{align}
            j_1 = \left[\frac{\frac{n}{2}}{p-n}\right]+1\equiv T_1 +1.
        \end{align}
        Then
        \begin{align}
            \|\varphi\|_{W^{1,2}} \leq C(n,p,g) \parenthesis{1+\lambda\|A\|_{L^p} }^{T_1}
            \parenthesis{ \|A^{-1}\|_{L^p}^{\frac{1}{2}}  +\lambda\|A\|_{L^p}^{\frac{1}{2}} }.
        \end{align}

        (2) To get a H\"older estimate, we take~$j_2$ such that~$q_{j_2}>n$, for example,
        \begin{align}
            j_2=  \left[\frac{n(p-1)}{2(p-n)}\right]+1\equiv T_2 + 1.
        \end{align}
        Then by the Sobolev embedding
        \begin{align}
            W^{1,q_{j_0}}\hookrightarrow C^{1-\frac{n}{q_{j_2}}}\equiv C^\alpha,
        \end{align}
        we see that~$\varphi\in C^\alpha$, with
        \begin{align}
            \|\varphi\|_{C^\alpha} \leq C(n,p,g) \parenthesis{1+\lambda\|A\|_{L^p} }^{T_2}
            \parenthesis{ \|A^{-1}\|_{L^p}^{\frac{1}{2}}  +\lambda\|A\|_{L^p}^{\frac{1}{2}} }.
        \end{align}
    \end{proof}

    \begin{rmk}
        The constants in the estimates are universal, independent of~$\lambda, A$ and~$\varphi$.
        This will be used in later applications.

        However, the above estimates may not be optimal.
        One may consider to use the Moser iteration argument, as did in~\cite{WenYangZhang2018complete} to get a better power of the factor~$(1+\lambda\|A\|_{L^p})$.

        When we get a uniform lower bound of~$\lambda$, then we may also replace the factor~$(1+\lambda\|A\|_{L^p})$ by~$(\lambda\|A\|_{L^p})$.

        The H\"older index~$\alpha$ can be better improved. But in general we cannot hope to get uniform~$C^\beta$ estimate for all~$\beta\in (0,1)$ if only~$\|A\|_{L^p}$ and~$\|A^{-1}\|_{L^p}$ are involved.
    \end{rmk}

%%%%%%%%%%%%%%%%%%%%%%%%%%%%%%%%%%%%%%%%%%%%%%%%%%

\section{Continuity of weighted eigenvalues and eigenspinors of Dirac operator}\label{sect:closed}

    In this section we consider the continuous dependence of the eigenvalues and eigenspinors on the weight~$A$ and prove Theorem~\ref{thm:continuity-closed}.

    \begin{proof}[Proof of Theorem~\ref{thm:continuity-closed}]
        We divide the proof into the following steps.

        \

        \noindent{\bf Step I.} We show that for any~$k\geq 1$, the sequence~$\parenthesis{\lambda_k(A_m)}_{m\geq 1}$ is bounded.
        Since it is a positive sequence, it suffices to show that
        \begin{align}
            \limsup_{m\to+\infty} \lambda_k(A_m)\leq \lambda_k(A).
        \end{align}

        \

        Indeed, for each~$m\geq 1$, consider the test spinors of the form
        \begin{align}
            \eta^{(m)}= \sum_{j=1}^k a^{(m)}_j \varphi_j(A),
        \end{align}
        where the coefficients~$\left(a^{(m)}_j\right)$ are chosen so that
        \begin{align}
            \int_M \Abracket{ A_m\eta^{(m)},\varphi_j(A_m)}\dv_g=0, \qquad 1\leq j\leq k-1,
        \end{align}
        and
        \begin{align}
            \int_M \Abracket{A_m \eta^{(m)},\eta^{(m)}}\dv_g =1.
        \end{align}
        This last normalization condition is just to guarantee that the coefficients of~$\eta^{(m)}$ do not vanish simultaneously.
        Then
        \begin{align}
            \frac{1}{\lambda_k(A_m)}
            = &\sup_{0\neq \psi\perp (V_{k-1}(A_m)\oplus\ker\D_g)}\braces{\frac{\int_M \Abracket{\psi,\D_g\psi}\dv_g }{\int_M \Abracket{A_m^{-1}\D_g\psi, \D_g\psi}\dv_g} }
            \geq \frac{ \int_M \Abracket{\eta^{(m)}, \D_g\eta^{(m)}} \dv_g }{ \int_M \Abracket{A_m^{-1}\D_g \eta^{(m)}, \D_g \eta^{(m)}}\dv_g }.
        \end{align}
        On one hand we have
        \begin{align}
            \int_M \Abracket{\eta^{(m)},\D_g \eta^{(m)}} \dv_g
            =\int_M \Abracket{\eta^{(m)}, \sum_{j=1}^k \lambda_j(A)\, A\,\varphi_j(A)} \dv_g
            =\sum_{j=1}^k \lambda_j(A) \, [a_j^{(m)}]^2.
        \end{align}
        On the other hand, for the denominator,
        \begin{align}
            \int_M & \Abracket{A_m^{-1}\D_g\eta^{(m)} , \D_g \eta^{(m)} } \dv_g
            = \int_M \Abracket{A^{-1}\D_g\eta^{(m)},\D_g\eta^{(m)}}
                +\Abracket{ (A_m^{-1}-A^{-1})\D_g\eta^{(m)},\D_g \eta^{(m)}} \dv_g
                \\
            =&\sum_{j=1}^k [\lambda_j(A)]^2 [a_j^{(m)}]^2
                +\sum_{i,j=1}^k \lambda_i(A)\, a_i^{(m)} \; \lambda_j(A)\,a_j^{(m)} \int_M \Abracket{ (A_m^{-1}-A^{-1}) A \, \varphi_i(A), A\, \varphi_j(A) } \dv_g \\
            \leq & \sum_{j=1}^k [\lambda_j(A)]^2 [a_j^{(m)}]^2
                +B^{(m)}_k\sum_{i,j=1}^k \lambda_i(A)\, a_i^{(m)} \; \lambda_j(A)\,a_j^{(m)}
        \end{align}
        where we have abbreviated
        \begin{align}
            B^{(m)}_k\coloneq \max_{1\leq i,j\leq m} \left|\int_M \Abracket{ (A_m^{-1}-A^{-1}) A \, \varphi_i(A), A\, \varphi_j(A) } \dv_g \right|.
        \end{align}
        By~\eqref{hypo}, we have~$B^{(m)}_k\to 0$ as~$m\to +\infty$ for fixed~$k\geq 1$.
        Then using Cauchy inequality we have
        \begin{align}
            \int_M \Abracket{A_m^{-1}\D_g\eta^{(m)} , \D_g \eta^{(m)} } \dv_g
            \leq &\sum_{j=1}^k [\lambda_j(A)]^2 [a_j^{(m)}]^2 + k B^{(m)}_k \sum_{j=1}^k [\lambda_j(A)]^2 [a_j^{(m)}]^2 \\
            = & (1+k B^{(m)}_k)\sum_{j=1}^k [\lambda_j(A)]^2 [a_j^{(m)}]^2.
        \end{align}
        Consequently,
        \begin{align}
            \frac{1}{\lambda_k(A_m)} \geq \frac{\sum_{j=1}^k \lambda_j(A) \, [a_j^{(m)}]^2 }{ \sum_{j=1}^k [\lambda_j(A)]^2 [a_j^{(m)}]^2 } \frac{1}{1+k B^{(m)}_k}
            \geq \frac{1}{\lambda_k(A)}\frac{1}{1+k B^{(m)}_k}.
        \end{align}
        It follows that
        \begin{align}
            \liminf_{m\to +\infty} \frac{1}{\lambda_k(A_m)} \geq \lim_{m\to+\infty}\frac{1}{\lambda_k(A)}\frac{1}{1+k B^{(m)}_k}=\frac{1}{\lambda_k(A)},
        \end{align}
        which is equivalent to
        \begin{align}
            \limsup_{m\to+\infty} \lambda_k(A_m)\leq \lambda_k(A).
        \end{align}

        \

    \noindent{\bf Step II.} For a fixed~$k\geq 1$, we show that~$\{\varphi_k(A_m)\}_{m\geq 1}$ is a bounded sequence in~$H^1(M)$ and~$C^\alpha(M)$ for some~$\alpha=\alpha(n)\in(0,1)$.

    \

    Note that each~$\varphi_k(A_m)$ is a solution to a linear equation
    \begin{align}
        \D_g\varphi_k(A_m)=\lambda_k(A_m)\, A_m \, \varphi_k(A_m), \qquad \mbox{ in } M
    \end{align}
    and satisfies a normalization condition
    \begin{align}
        1=\int_M \Abracket{A_m\; \varphi_k(A_m),\varphi_k(A_m)}\dv_g=\int_M |\sqrt{A_m}\;\varphi_k(A_m)|^2\dv_g.
    \end{align}
    Here~$\sqrt{A_m}$ stands for the symmetric positive definite square root matrix of~$A_m$.
    Apply Proposition~\ref{prop:regularity} we get estimates on the~$H^1$ norm and~$C^\alpha$ norms in terms of~$\lambda_k(A_m)$ and~$\|A_m\|_{L^p}$,~$\|A_m^{-1}\|_{L^p}$.
    We have seen in the last step that~$\lambda_k(A_m)$ are uniformly bounded.
    As for the weights, since~$A_m\rightharpoonup A$ and~$A_m^{-1}\rightharpoonup A$ weakly in~$L^p$, we see that~$\|A_m\|_{L^p}$,~$\|A_m^{-1}\|_{L^p}$ are all bounded.
    Thus,~$\{\varphi_k(A_m)\}_{m\geq 1}$ are bounded in~$H^1$ as well as in~$C^\alpha$.

    \

    \noindent{\bf Step III.} Having shown the boundedness of~$\braces{\lambda_k(A_m)\mid m\geq 1}$ and~$\braces{\varphi_k(A_m)\mid m\geq 1}$, we can extract a subsequence~$\{m_l\}_{l\geq 1}$ such that
    \begin{align}
        \lim_{l\to+\infty}\lambda_k(A_{m_l})= \lambda_\infty,
    \end{align}
    and
    \begin{align}
        \varphi_k(A_{m_l}) \to \varphi_\infty \qquad \mbox{ in } H^1(M)\cap C^\alpha(M).
    \end{align}
    Then in this step we show that~$\lambda_\infty=\lambda_k(A)$ and~$\varphi_\infty\in \Eigen(A^{-1}\D_g; \lambda_k(A))$.

    \

    For any smooth test spinor~$\eta$, we have
    \begin{align}
        \int_M \Abracket{\D_g\varphi_\infty,\eta}\dv_g
        =& \lim_{l\to+\infty} \int_M \Abracket{\D_g \varphi_k(A_{m_l}),\eta}\dv_g \\
        =&\lim_{l\to+\infty} \int_M \Abracket{\lambda_k(A_{m_l}) \,A_{m_l}\, \varphi_k(A_{m_l}),\eta}\dv_g \\
        =&\int_M \lambda_\infty \Abracket{A\, \varphi_\infty,\eta}\dv_g.
    \end{align}
    That is,~$\varphi_\infty\in H^1\cap C^\alpha$ is a weak solution of
    \begin{align}
        \D_g \varphi_\infty = \lambda_\infty \, A\, \varphi_\infty,
    \end{align}
    which satisfies
    \begin{align}
        \int_M \Abracket{A \varphi_\infty, \, \varphi_\infty}\dv_g= \lim_{l\to+\infty}\int_M \Abracket{A_{m_l}\, \varphi_k(A_{m_l}), \, \varphi_k(A_{m_l})} \dv_g=1.
    \end{align}
    Thus~$\varphi_\infty$ is an eigenspinor of~$\D_g$ with weight~$A$ for the eigenvalue~$\lambda_\infty$.

    It remains to show~$\lambda_\infty=\lambda_k(A)$.
    We prove this by induction.

    For~$k=1$: we know that~$0\leq \lambda_\infty\leq \lambda_1(A)$. If~$\lambda_\infty\neq \lambda_1(A)$, then~$\lambda_\infty=0$ and~$\varphi_\infty$ is a harmonic spinor.
    This is impossible since, for any harmonic spinor~$\theta\in\ker\D_g$,
    \begin{align}
        \int_M \Abracket{A\, \varphi_\infty,\theta}\dv_g
        =\lim_{l\to+\infty}\int_M \Abracket{A_{m_l}\, \varphi_1(A_{m_l}),\theta} \dv_g=0, \quad
        \mbox{contradiction!}
    \end{align}
    Thus~$\lambda_\infty=\lambda_1(A)$ and~$\varphi_\infty\in \Eigen(A^{-1}\D_g, \lambda_1(A))$.

    Inductively suppose the statements hold for~$1,2,\cdots, k-1$.
    As argued above, we see that~$\varphi$ is~$A$-orthogonal to the subspace~$\Span\braces{\varphi_1(A),\cdots,\varphi_{k-1}(A)}$, hence~$\lambda_\infty\geq \lambda_k(A)$ and it follows that
    \begin{align}
        \lambda_\infty=\lambda_k(A), & & \mbox{ and } & &
        \varphi_\infty\in \Eigen(A^{-1}\D_g,\lambda_k(A)).
    \end{align}

    \

    \noindent{\bf Step IV.} Now we show the convergence of the full sequences~$(\lambda_k(A_m) )$ to~$\lambda_k(A)$.
    This indeed follows from the well known fact that if any subsequence has a further sub-subsequence converging to a fixed limit, then this original sequence converges to the same limit.
    A proof can also be found in~\cite{WenYangZhang2018complete}.

    The convergence for the negative eigenvalues follows analogously. Applying the min–max characterization for the negative eigenvalues (Remark ~\ref{eq:minmax negative}) within the framework of \noindent{\bf Step I.}-\noindent{\bf Step IV.} yields the corresponding inequality
    \begin{align}
    \liminf_{m \to \infty} \lambda_k(A_m) ;\geq; \lambda_k(A),
    \qquad \forall, k<0,
    \end{align}
    thereby establishing the result for all $k \in \mathbb{Z}_*$.

    We should remark that the convergence of the sequence~$\varphi_k(A_m)$ in general fails, since the eigenvalues of Dirac operators in general has multiplicities greater than one, and one can easily construct examples where the original sequence~$\varphi_k(A_m)$ fails to converge.
    A possible remedy for this is by using the concept of eigen projectors.

    \

    \noindent{\bf Step V.} We prove the continuity of the projectors.

    In {\bf Step III} it is shown that~$\lim\limits_{m\to+\infty} P_\ell(A)^\perp P_\ell(A_m)=0$, thus it suffices to have the converse inclusion
    \begin{align}
        \lim_{m\to+\infty} P^\perp_\ell(A_m) P_\ell(A)=0.
    \end{align}
    Equivalently, for any~$\psi\in H_\ell(A)$ with~$\int_M \Abracket{A\psi,\psi}\dv_g=1$, we need to show that~$P^\perp_\ell(A_m)(\psi)\to 0$ in~$L^2$, as~$m\to+\infty$.
    By induction we suppose this holds for~$1,2,\cdots, \ell-1$, and we prove the case for~$\ell\geq 1$. The case~$\ell=1$ can be proved using similar but easier argument.

    For each~$m\geq 1$, we expand~$\psi\in H_\ell(A)$ as
    \begin{align}
        \psi=\sum_{k\neq 0} a^{(m)}_k \varphi_k(A_m)+ \sum_{j=1}^{h_0} b_j^{(m)}\theta_j.
    \end{align}
    Note that~\eqref{hypo} implies
    \begin{align}
        1=&\int_M \Abracket{A\psi,\psi}\dv_g
         =\lim_{m\to+\infty }\int_M \Abracket{A_m\psi,\psi}\dv_g
        =\sum_{k\neq 0} |a_k^{(m)} |^2 + \sum_{j} b_j^{(m)} b_i^{(m)}\int_M \Abracket{A_m\theta_j,\theta_i}\dv_g \\
        =&\sum_{\sigma_{\ell -1}< k\leq \sigma_{\ell}} |a^{(m)}_k|^2
         +\sum_{k\leq \sigma_{\ell -1}} |a^{(m)}_k|^2
          + \sum_{\sigma_{\ell}<k} |a^{(m)}_k|^2
          +\sum_{k<0} |a^{(m)}_k|^2 \\
          &\quad +  \sum_{j=1}^{h_0} b_j^{(m)} b_i^{(m)}\int_M \Abracket{A_m\theta_j,\theta_i}\dv_g.
    \end{align}
    The first sum is precisely~$\|P_\ell(A_m)\psi\|_{L^2_A}^2$, and it remains to show the other sums disappear in the limits.
    Thanks to~\eqref{hypo} we have
    \begin{align}
        0=\int_M \Abracket{A\psi,\theta_j}\dv_g
        = \lim_{m\to+\infty} \int_M \Abracket{A_m\psi,\theta_j}\dv_g
        = \lim_{m\to+\infty} b_j^{(m)}  , \qquad \forall 1\leq j\leq h_0.
    \end{align}
    By the inductive assumption,
    \begin{align}
        \lim_{m\to+\infty} P_i(A_m)\psi = P_i(A)\psi=0, \qquad \forall 1\leq i\leq \ell-1.
    \end{align}
    Hence,
    \begin{align}
        \lim_{m\to+\infty}\sum_{k\leq \sigma_{\ell-1}} |a^{(m)}_k|^2=0.
    \end{align}
    Thus, we can consider the spinors
    \begin{align}
        \phi_m\coloneqq
        & \sum_{\sigma_{\ell -1}< k\leq \sigma_{\ell} } a_k^{(m)}\, \varphi_k(A_m)
        + \sum_{\sigma_{\ell}<k } a_k^{(m)}\, \varphi_k(A_m)
        + \sum_{k<0} a_k^{(m)}\, \varphi_k(A_m) \\
        =&\psi-\delta_m
    \end{align}
    which satisfies~$\delta_m\to 0$ in~$H^1$ (since~$\delta_m$ actually lies in a finite dimensional subspace) and hence
    \begin{align}
        \lim_{m\to+\infty}\int_M\Abracket{A_m \,\phi_m,\,\phi_m}\dv_g=1,
    \end{align}
    and
    \begin{align}
        \phi_m \perp_{A_m}\ker\D_g,& &
        \phi_m\perp_{A_m} \Span\braces{\varphi_1(A_m),\cdots,\varphi_{\sigma_{\ell-1}}(A_m)}.
    \end{align}
    Moreover,
   \begin{equation}\label{eq:numtor mu l}
        \lim_{m\to+\infty}\int_M \Abracket{\D_g\phi_m,\phi_m}\dv_g
        =\mu_\ell(A),
    \end{equation}

    \begin{equation}\label{eq:denom mu l}
        \lim_{m\to+\infty} \int_M \Abracket{A_m^{-1}\D_g\phi_m,\D_g\phi_m}\dv_g=\mu_\ell(A)^2.
    \end{equation}

    We claim that
    \begin{align}
       \lim_{m\to+\infty}  \sum_{\sigma_{\ell}<k} |a_k^{(m)}|^2+ \sum_{k<0}|a^{(m)}_k|^2 =0
    \end{align}
    which would complete the proof.

    Indeed, set
    \begin{align}
        r_k\coloneqq\limsup_{m\to+\infty} \sum_{\sigma_\ell<k} |a_k^{(m)}|^2+ \sum_{k<0}|a^{(m)}_k|^2
    \end{align}
    and assume by contradiction that~$r_k>0$.
    Passing to a subsequence we may assume
    \begin{align}
        r_k\coloneqq\lim_{m\to+\infty} \sum_{\sigma_\ell<k} |a_k^{(m)}|^2+ \sum_{k<0}|a^{(m)}_k|^2>0.
    \end{align}
    Then
    \begin{align}\label{eq:phi m D}
        \int_M \Abracket{\D_g\, \phi_m,\phi_m} \dv_g
        =&\sum_{\sigma_{\ell-1}<k \leq \sigma_\ell} \lambda_k(A_m) |a_k^{(m)} |^2
        + \sum_{\sigma_\ell<k} \lambda_k(A_m) |a_k^{(m)} |^2
         \\
        &\quad + \sum_{k<0} \lambda_k(A_m) |a_k^{(m)} |^2.
    \end{align}
   while
   \begin{align}\label{eq:phi m DD}
       \int_M \Abracket{A_m^{-1}\,\D_g\phi_m,\,\D_g\phi_m}\dv_g
       =&\sum_{\sigma_{\ell-1}<k \leq \sigma_\ell} \lambda_k(A_m)^2 |a_k^{(m)} |^2
        + \sum_{\sigma_\ell<k} \lambda_k(A_m)^2 |a_k^{(m)} |^2 \\
        &\quad + \sum_{k<0} \lambda_k(A_m)^2 |a_k^{(m)} |^2.
   \end{align}
Scaling these equations yields:
\begin{align}
    \int_M \langle \D_g\phi_m, \phi_m \rangle \dv_g
    &\leq \mu_\ell(A_m) \sum_{\sigma_{\ell-1} < k \leq \sigma_{\ell}} |a_k^{(m)}|^2
    + \sum_{k > \sigma_{\ell}} \lambda_k(A_m)|a_k^{(m)}|^2\\
    \int_M \langle A_m^{-1}\D_g\phi_m, \D_g\phi_m \rangle \dv_g
    &\geq [\mu_\ell(A_m)]^2 \sum_{\sigma_{\ell-1} < k \leq \sigma_{\ell}} |a_k^{(m)}|^2
    + \mu_\ell(A_m) \sum_{k > \sigma_{\ell}} \lambda_k(A_m)|a_k^{(m)}|^2\\
    &\quad + \sum_{k<0} |\lambda_k(A_m)|^2|a_k^{(m)}|^2.
\end{align}
We define
\begin{align}
X_m =&  \mu_\ell(A_m) \sum_{\sigma_{\ell-1} < k \leq \sigma_{\ell} } |a_k^{(m)}|^2
+ \sum_{k > \sigma_{\ell}} \lambda_k(A_m) |a_k^{(m)}|^2, \\
Y_m =& \sum_{k<0} |\lambda_k(A_m)|^2 |a_k^{(m)}|^2.
\end{align}
Therefore, we have
\begin{align}
    &\lim_{m \to \infty} \frac{\int_M \langle \D_g \phi_m, \phi_m \rangle \, \mathrm{d}v_g}
{\int_M \langle A_m^{-1} \D_g \phi_m, \D_g \phi_m \rangle \, \mathrm{d}v_g}\\
    = &\lim_{m \to \infty}\frac{ \sum\limits_{\sigma_{\ell-1} < k \leq \sigma_{\ell}} \lambda_k(A_m)|a_k^{(m)}|^2
    + \sum\limits_{k > \sigma_{\ell}} \lambda_k(A_m)|a_k^{(m)}|^2 + \sum\limits_{k<0} \lambda_k(A_m)|a_k^{(m)}|^2}{\sum\limits_{\sigma_{\ell-1} < k \leq \sigma_{\ell}} \lambda_k(A_m)^2|a_k^{(m)}|^2
    + \sum\limits_{k > \sigma_{\ell}} \lambda_k(A_m)^2|a_k^{(m)}|^2 + \sum\limits_{k<0} \lambda_k(A_m)^2|a_k^{(m)}|^2}\\
    \leq & \lim_{m \to \infty}\frac{\mu_\ell(A_m) \sum\limits_{\sigma_{\ell-1} < k \leq \sigma_{\ell}} |a_k^{(m)}|^2
    + \sum\limits_{k > \sigma_{\ell}} \lambda_k(A_m)|a_k^{(m)}|^2}{[\mu_\ell(A_m)]^2 \sum\limits_{\sigma_{\ell-1} < k \leq \sigma_{\ell}} |a_k^{(m)}|^2
    + \mu_\ell(A_m) \sum\limits_{k > \sigma_{\ell}} \lambda_k(A_m)|a_k^{(m)}|^2 + \sum\limits_{k<0} |\lambda_k(A_m)|^2|a_k^{(m)}|^2}\\
    =& \lim_{m \to \infty} \frac{X_m}{\mu_\ell(A_m) X_m + Y_m}.
\end{align}
To establish the strict inequality below
\[
\lim_{m \to \infty} \frac{\int_M \langle \D_g \phi_m, \phi_m \rangle \, \mathrm{d}v_g}
{\int_M \langle A_m^{-1} \D_g \phi_m, \D_g \phi_m \rangle \, \mathrm{d}v_g}
< \frac{1}{\mu_\ell(A)},
\]

we proceed with a case-by-case analysis:

\begin{itemize}
    \item[\textbf{(1)}] If \( Y_m \to 0 \) and \( \sum\limits_{k > \sigma_\ell} |a_k^{(m)}|^2 \not\to 0 \), as $m \to +\infty$, then the denominator decreases strictly compared to the reference expression. The limit is thus strictly less than \( \frac{1} {\mu_\ell(A)} \).

    \item[\textbf{(2)}] If \( \sum\limits_{k > \sigma_\ell} |a_k^{(m)}|^2 \to 0 \) and \( Y_m \not\to 0 \), as $m \to +\infty$.

    \item[\textbf{(3)}] If both terms remain positive as $m \to +\infty$.\\
     For Cases \textbf{(2)} and \textbf{(3)}, we apply an elementary inequality
    \begin{equation}\label{eq:element ineq}
    \frac{a}{b a + c} < \frac{1}{b} \quad \forall a, b, c > 0,
    \end{equation}
    to conclude.
\end{itemize}

In all cases, the inequality follows.

   On the other hand,~\eqref{eq:numtor mu l} and~\eqref{eq:denom mu l} implies
   \begin{align}
       \lim_{m\to+\infty}\frac{\int_M \Abracket{\D_g\phi_m,\phi_m}\dv_g}{\int_M \Abracket{A_m^{-1}\D_g\phi_m,\D_g\phi_m} \dv_g}
       =\frac{1}{\mu_\ell(A)},
   \end{align}
   which leads to a contradiction.
   
For the negative part of the spectrum one obtains by the same argument that for each~$\ell<0$
\begin{align}
P_\ell(A_m)\rightarrow P_\ell(A) \quad \text{as}\quad m\rightarrow \infty.
\end{align}
This proves the claim and hence the theorem.

    \end{proof}

%%%%%%%%%%%%%%%%%%%%%%%%%%%%%%%%%%%%%%%%%%%%%%%%%%%%%%%%%%%%

\section{Continuity of eigenvalues on manifolds with boundary}\label{sect:boundary}

In this section, we discuss the continuity of weighted eigenvalues under the chiral (CHI) boundary condition and aim to prove Theorem~\ref{thm:continuity-bdd}. 
We begin by recalling basic facts concerning spin geometry on manifolds with boundary; for detailed discussions, we refer to~\cite{Ginoux2009Dirac}.

Let $(M, g)$ be an $n$-dimensional compact spin Riemannian manifold with boundary $\partial M$, equipped with a spin structure and associated spinor bundle $(\Sigma_g M, g^s)$, where $g^s$ denotes the induced metric on the spinor bundle $\Sigma_g M$. On the boundary $\partial M$, the restricted spinor bundle $\mathbf{S} = \Sigma_g M|_{\partial M}$ inherits a metric induced naturally by $g^s$, which we denote by $g^{\mathbf{S}}$, which is an induced metric, orientation, and spin structure. The restricted spinor bundle $\mathbf{S}=\Sigma_g M|_{\partial M}$ is identified as follows:
$$
\mathbf{S}\cong
\begin{cases}
\Sigma \partial M & \text{if $n-1$ is even},\\[0.5em]
\Sigma \partial M\oplus \Sigma \partial M & \text{if $n-1$ is odd}.
\end{cases}
$$
In particular, for odd $n$, this splitting corresponds to the decomposition
$$
\mathbf{S}=\Sigma^{+} M|_{\partial M}\oplus\Sigma^{-} M|_{\partial M}.
$$

On the boundary spinor bundle $\mathbf{S}$, the induced Clifford multiplication $\gamma^{\mathbf{S}}$ and spinorial connection $\snabla^{\mathbf{S}}$ are defined by
$$
\gamma^{\mathbf{S}}(X)\psi=\gamma(X)\gamma(\mathbf{n})\psi,\quad\snabla_X^{\mathbf{S}}\psi=\snabla_X\psi-\frac{1}{2}\gamma(\II(X))\gamma(\mathbf{n})\psi,
$$
where $\mathbf{n}$ denotes the inward unit normal vector on $\partial M$, and $\II$ is the second fundamental form. These structures satisfy
$$
\snabla_X^{\mathbf{S}}(\gamma(\mathbf{n})\psi)=\gamma(\mathbf{n})\snabla_X^{\mathbf{S}}\psi.
$$

The chiral boundary condition is defined by the projection operator
$$
\CHI^+=\frac{1}{2}(\mathrm{Id}-\gamma(\mathbf{n})G),
$$
where $G$ is an endomorphism-field of $\Sigma_g M$ (whose restriction on $\p M$ is also denoted by $G$) that is involutive, unitary, parallel, and anticommutes with Clifford multiplication, satisfying
$$
G^2=\mathrm{Id},\quad \langle G\psi,G\varphi\rangle=\langle\psi,\varphi\rangle,\quad\snabla_X(G\psi)=G\snabla_X\psi,\quad\gamma(X)G=-G\gamma(X),
$$
for all $X\in \Gamma(TM)$ and $\psi,\varphi\in\Gamma(\Sigma_g M)$.

It is known (see \cite{Ginoux2009Dirac}) that the Dirac operator $\D_g$ with chiral boundary condition $(\D_g,\CHI^+)$ has a real, discrete, and unbounded spectrum. We now study the following weighted eigenvalue problem:
\begin{equation}
\left\{
\begin{aligned}
\D_g\varphi^{\mathrm{CHI}}&=\lambda^{\mathrm{CHI}}A\varphi^{\mathrm{CHI}}&&\text{in }M,\\[0.5em]
\CHI^+\varphi^{\mathrm{CHI}}&=0&&\text{on }\partial M,
\end{aligned}
\right.
\label{eq:bddMAIN}
\end{equation}
where $A\in\mathrm{End}(\Sigma_g M)$ is a symmetric, and fiberwise positive definite endomorphism.

We denote the weighted $\rm{CHI}$ eigenvalues of \eqref{eq:bddMAIN} by $\lambda_k^{\mathrm{CHI}}(A)$, with corresponding eigenspinors $\varphi_k^{\mathrm{CHI}}(A)$ satisfying
\begin{align}
A^{-1}\D_g\varphi_k^{\mathrm{CHI}}(A)=\lambda_k^{\mathrm{CHI}}(A)\varphi_k^{\mathrm{CHI}}(A),\quad k\in\mathbb{Z}_*.
\end{align}
These eigenvalues are real, discrete, and unbounded, indexed as follows:
\begin{align}
-\infty\leftarrow\cdots\leq\lambda_{-k}^{\mathrm{CHI}}(A)\leq\cdots\leq\lambda_{-1}^{\mathrm{CHI}}(A)<0<\lambda_1^{\mathrm{CHI}}(A)\leq\cdots\leq\lambda_k^{\mathrm{CHI}}(A)\leq\cdots\rightarrow+\infty.
\end{align}
Let $V_k^{\mathrm{CHI}}(A)\subset H^{1}(\Sigma_g M)$ be the finite-dimensional eigenspace associated with $\lambda_k^{\mathrm{CHI}}(A)$. 
Additionally, we note that $(\D_g,\CHI^+)$ has no kernel~\cite[Corollary 3.7]{Chen2019estimates}, i.e.
\begin{align}
\ker(A^{-1}\D_g,\CHI^+) = \ker(\D_g,\CHI^+) = \{0\},
\end{align}
this kernel is independent of $A$.

We write the spectrum into its distinct weighted $\mathrm{CHI}$ eigenvalues. For each $\ell\in \mathbb{Z}_*$, denote by $\mu^{\mathrm{CHI}}_\ell(A)$ the $\ell$-th distinct weighted $\mathrm{CHI}$ eigenvalue, and set the associate eigenspace as
$H^{\mathrm{CHI}}_\ell(A) =\Eigen\big((A^{-1}\D_g,\CHI^+), \mu^{\mathrm{CHI}}_\ell(A) \big)$, whose dimension is $h_\ell =\dim H^{\mathrm{CHI}}_\ell(A)$. Let $P^{\mathrm{CHI}}_\ell(A)$ be the projector onto $H^{\mathrm{CHI}}_\ell(A)$. For $\ell > 0$, we set $\sigma_\ell\coloneqq\sum_{p=1}^\ell h_p$.
We have the eigenspaces can be conveniently labeled as follows:
\begin{align}
H^{\mathrm{CHI}}_1(A)&=\Span\big\{\varphi^{\mathrm{CHI}}_j(A)\mid 1\le j\le h_1\big\},\\
H^{\mathrm{CHI}}_\ell(A)&=\Span\big\{\varphi^{\mathrm{CHI}}_j(A)\mid \sigma_{\ell-1}< j\le \sigma_\ell\big\},\qquad \ell>1,
\end{align}
 and for~$\ell<0$, the eigenspaces and projectors are defined by a similar method. Moreover, for each $A_m$ the eigenspaces $H^{\mathrm{CHI}}_\ell(A_m)$ are still organized not with respect to the eigenvalues of $(A_m^{-1}\D_g,\CHI^+)$ but with respect to those of $(A^{-1}\D_g,\CHI^+)$, namely
            \begin{align}
                H^{\mathrm{CHI}}_1(A_m)=&\Span\braces{\varphi^{\mathrm{CHI}}_j(A_m)\mid 0<j\leq h_1}, \\
                H^{\mathrm{CHI}}_\ell(A_m)=&\Span\braces{\varphi^{\mathrm{CHI}}_j(A_m)\mid \sigma_{\ell-1} < j \leq \sigma_\ell   },\quad \ell>1,
            \end{align}
            and~$H^{\mathrm{CHI}}_\ell(A_m)$ for~$\ell<0$ are similarly defined, with the projector onto~$H^{\mathrm{CHI}}_\ell(A_m)$ denoted by~$P^{\mathrm{CHI}}_\ell(A_m)$ for each~$\ell\neq 0$.

We adopt the normalization conditions:
\begin{align}
\int_M\langle A\varphi_j^{\mathrm{CHI}}(A),\varphi_k^{\mathrm{CHI}}(A)\rangle_{g^s}\,dv_g&=\delta_{jk},\quad\forall j,k\in\mathbb{Z}_*.
\end{align}

Analogous to the case of closed manifolds, the continuity analysis for weighted \(\mathrm{CHI}\) eigenvalues and associated eigenprojections on compact manifolds with boundary will be carried out in several systematic steps.

\

\noindent{\bf Step 0}: We still use the min-max characterizations of weighted eigenvalues. Similar to the discussion on closed manifolds, we can define a Grassmannian manifold
$$Gr_{k-1}^*(A) :=\{V\subseteq H^{1}_{\mathrm{CHI}}(\Sigma_gM)| V\perp_A \ker(\D_g,\CHI^+), \dim V = k-1\}, $$
where the $A$-orthogonality $V \perp_A \ker(\D_g,\CHI^+)$ means~$\int_M \langle A\psi, \theta \rangle \, \mathrm{d}v_g = 0$, for any~$\psi \in V$ and~$\theta \in \ker(\D_g,\CHI^+)$.

The $k$-th weighted $\mathrm{CHI}$ positive eigenvalue is characterized by the following proposition.
 \begin{prop}\label{prop:minmax bdd}
        The~$k$-th positive weighted $\mathrm{CHI}$ eigenvalues~$\lambda_k^{\mathrm{CHI}}(A)$ is characterized by
\begin{align}\label{eq:minmax positive bdd}
\frac{1}{\lambda_k^{\mathrm{CHI}}(A)}
=& \inf_{V\in Gr^*_{k-1}(A)} \; \sup_{0\neq \psi\perp_A V}\braces{\frac{\int_M \Abracket{\psi,\D_g\psi}\dv_g }{\int_M \Abracket{A^{-1}\D_g\psi, \D_g\psi}\dv_g} }.
\end{align}
\end{prop}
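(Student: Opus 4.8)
The plan is to reduce Proposition~\ref{prop:minmax bdd} to the closed-manifold case by verifying that all the spectral-theoretic ingredients used in the proof of Proposition~\ref{prop:minmax} remain available here. The essential inputs are: (i) the operator $(A^{-1}\D_g,\CHI^+)$ has a complete $L^2_A$-orthonormal system of eigenspinors $\varphi_k^{\mathrm{CHI}}(A)$, $k\in\mathbb{Z}_*$, with real eigenvalues $\lambda_k^{\mathrm{CHI}}(A)$ diverging to $\pm\infty$; (ii) $\ker(\D_g,\CHI^+)=\{0\}$, so there is no harmonic-spinor subspace to quotient out and $Gr^*_{k-1}(A)$ really consists of $(k-1)$-planes in $H^1_{\mathrm{CHI}}$; and (iii) the chiral boundary condition is self-adjoint, so the boundary term in the integration by parts $\int_M\langle\D_g\psi,\eta\rangle\,\dv_g=\int_M\langle\psi,\D_g\eta\rangle\,\dv_g$ vanishes for $\psi,\eta$ in the form domain $H^1_{\mathrm{CHI}}(\Sigma_g M)$. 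Facts (i) and (ii) are recalled in the text (the discreteness and unboundedness of the $(\D_g,\CHI^+)$ spectrum, and \cite[Corollary 3.7]{Chen2019estimates}), and (iii) is standard for the chiral boundary condition; I would cite these and then run the two-sided estimate verbatim.

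Concretely, writing $\nu_k$ for the right-hand side of~\eqref{eq:minmax positive bdd}, the lower bound $\nu_k\ge 1/\lambda_k^{\mathrm{CHI}}(A)$ goes as follows. Given any $V\in Gr^*_{k-1}(A)$, the space $\Span\{\varphi_1^{\mathrm{CHI}}(A),\dots,\varphi_k^{\mathrm{CHI}}(A)\}$ is $k$-dimensional, so it contains a nonzero $\psi_k=\sum_{j=1}^k a_j\varphi_j^{\mathrm{CHI}}(A)$ with $\psi_k\perp_A V$ (dimension count; here there is no $\ker\D_g$ to worry about). Using $\D_g\varphi_j^{\mathrm{CHI}}(A)=\lambda_j^{\mathrm{CHI}}(A)\,A\,\varphi_j^{\mathrm{CHI}}(A)$ and the $L^2_A$-orthonormality, the Rayleigh quotient at $\psi_k$ equals $\bigl(\sum_j\lambda_j^{\mathrm{CHI}}(A)a_j^2\bigr)\big/\bigl(\sum_j[\lambda_j^{\mathrm{CHI}}(A)]^2 a_j^2\bigr)\ge 1/\lambda_k^{\mathrm{CHI}}(A)$, since $0<\lambda_1^{\mathrm{CHI}}\le\cdots\le\lambda_k^{\mathrm{CHI}}$. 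Taking the supremum over admissible $\psi$ and then the infimum over $V$ gives $\nu_k\ge 1/\lambda_k^{\mathrm{CHI}}(A)$. For the reverse inequality, choose the specific competitor $V_*=\Span\{\varphi_1^{\mathrm{CHI}}(A),\dots,\varphi_{k-1}^{\mathrm{CHI}}(A)\}\in Gr^*_{k-1}(A)$; any $0\neq\psi\perp_A V_*$ expands as $\psi=\sum_{j\ge k}a_j\varphi_j^{\mathrm{CHI}}(A)+\sum_{j<0}a_j\varphi_j^{\mathrm{CHI}}(A)$ (plus, if one worked with $V_*$ not containing all lower modes, lower positive modes — but with this choice only $j\ge k$ and $j<0$ survive), and the Rayleigh quotient is $\bigl(\sum\lambda_j^{\mathrm{CHI}}a_j^2\bigr)\big/\bigl(\sum[\lambda_j^{\mathrm{CHI}}]^2 a_j^2\bigr)$, whose supremum over such $\psi$ is exactly $1/\lambda_k^{\mathrm{CHI}}(A)$, attained at $\psi=\varphi_k^{\mathrm{CHI}}(A)$. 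Hence $\nu_k\le 1/\lambda_k^{\mathrm{CHI}}(A)$, and equality follows.

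Two technical points deserve care, and the second is where I expect the only real friction. First, one must check that the denominator $\int_M\langle A^{-1}\D_g\psi,\D_g\psi\rangle\,\dv_g$ is strictly positive for $0\neq\psi\in H^1_{\mathrm{CHI}}$, which holds because $A^{-1}$ is fiberwise positive definite and $\D_g\psi\neq 0$ (as $\ker(\D_g,\CHI^+)=0$); this also legitimizes writing the quotient. Second — the main obstacle — is the self-adjointness/integration-by-parts step: to identify $\int_M\langle\psi,\D_g\psi\rangle\,\dv_g=\sum_k\lambda_k^{\mathrm{CHI}}(A)a_k^2$ and $\int_M\langle A^{-1}\D_g\psi,\D_g\psi\rangle\,\dv_g=\sum_k[\lambda_k^{\mathrm{CHI}}(A)]^2 a_k^2$ for $\psi$ in the form domain, one needs that $\CHI^+$ is an \emph{elliptic, self-adjoint} boundary condition for $\D_g$ so that the eigenspinors form a complete orthonormal basis of $L^2$ and the form domain is precisely $H^1_{\mathrm{CHI}}$ with no residual boundary contribution in the Green's formula; this is the one place where the closed-case argument does not transfer by pure formalism and must invoke the spin-geometry facts for the chiral condition recalled above (and in \cite{Ginoux2009Dirac}). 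Once this is in hand, the argument is a line-by-line transcription of the proof of Proposition~\ref{prop:minmax}, and I would indeed present it as ``the proof is identical to that of Proposition~\ref{prop:minmax}, replacing $Gr^*_{k-1}(A)$ by its chiral analogue and using that $\ker(\D_g,\CHI^+)=0$.''
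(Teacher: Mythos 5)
Your proposal is correct and follows essentially the same two-sided estimate as the paper's proof: the lower bound via a dimension count inside $\Span\{\varphi_1^{\mathrm{CHI}}(A),\dots,\varphi_k^{\mathrm{CHI}}(A)\}$ against a given $V$, and the upper bound via the explicit competitor $V_*=\Span\{\varphi_1^{\mathrm{CHI}}(A),\dots,\varphi_{k-1}^{\mathrm{CHI}}(A)\}$. Your preliminary checklist (completeness of the chiral eigensystem, triviality of $\ker(\D_g,\CHI^+)$, vanishing of boundary terms under the chiral condition) makes explicit what the paper leaves implicit in its terse ``directly compute,'' which is a reasonable amount of added care rather than a different route.
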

\begin{proof}
 Let $\nu^{\mathrm{CHI}}_k$ denote the right-hand side. We show $\frac{1}{\lambda^{\mathrm{CHI}}_k(A)} = \nu^{\mathrm{CHI}}_k$.

For $V \in \mathrm{Gr}_{k-1}^*(A)$, the $k$-dimensional space spanned by $\{\varphi^{\mathrm{CHI}}_i(A)\}_{i=1}^k$ contains a spinor $\psi^{\mathrm{CHI}}_k$ with
\begin{equation}
    \int_M \langle A\psi^{\mathrm{CHI}}_k, \varphi \rangle_{g^s} \dv_g = 0 \quad \forall \varphi \in V.
\end{equation}
We assume $\psi^{\mathrm{CHI}}_k=\sum_{j=1}^k b_j\, \varphi^{\mathrm{CHI}}_j(A)$, and directly compute that
        \begin{align}
            \sup_{0\neq \psi\perp V}\braces{\frac{\int_M \Abracket{\psi,\D_g\psi}\dv_g }{\int_M \Abracket{A^{-1}\D_g\psi, \D_g\psi}\dv_g} }
            \geq &\frac{\int_M \Abracket{\psi^{\mathrm{CHI}}_k,\D_g\psi^{\mathrm{CHI}}_k}\dv_g }{\int_M \Abracket{A^{-1}\D_g\psi^{\mathrm{CHI}}_k, \D_g\psi^{\mathrm{CHI}}_k}\dv_g}
            =\frac{\sum_{i=1}^k \lambda^{\mathrm{CHI}}_i(A)\, b_i^2}{\sum_{i=1}^k \lambda^{\mathrm{CHI}}_i(A)^2 b_i^2} \\
            \geq& \frac{1}{\lambda^{\mathrm{CHI}}_k(A)}.
        \end{align}
Hence~$\nu^{\mathrm{CHI}}_k\geq \frac{1}{\lambda^{\mathrm{CHI}}_k(A)}$.

By taking~$V_*(A)=\Span\braces{\varphi^{\mathrm{CHI}}_1(A),\cdots,\varphi^{\mathrm{CHI}}_{k-1}(A)}$ and~$\psi=\varphi_k(A)\perp_A V_*(A)$, we have
        $$
            \frac{\int_M \Abracket{\psi,\D_g\psi}\dv_g }{\int_M \Abracket{A^{-1}\D_g\psi, \D_g\psi}\dv_g} = \frac{1}{\lambda^{\mathrm{CHI}}_k(A)}.
        $$
Thus~$\nu^{\mathrm{CHI}}_k\leq \frac{1}{\lambda^{\mathrm{CHI}}_k(A)}$.
\end{proof}
Moreover, entirely similarly to the discussion on closed manifolds, we quickly obtain the following fact:
    \begin{rmk}
         The negative weighted $\mathrm{CHI}$ eigenvalues can be defined as:
        \begin{align}\label{eq:minmax negative bdd}
        \frac{1}{\lambda^{\mathrm{CHI}}_{-k}(A)}
        =& \sup_{V \in Gr^*_{k-1}(A)} \; \inf_{0\neq \psi\perp_A V}\braces{\frac{\int_M \Abracket{\psi,\D_g\psi}\dv_g }{\int_M \Abracket{A^{-1}\D_g\psi, \D_g\psi}\dv_g} }.
        \end{align}        
    \end{rmk}

   An analogous min-max characterization of the weighted $\mathrm{CHI}$ eigenvalues follows as:
    \begin{prop}\label{prop:another minmax bdd}
        The $k$-th positive weighted $\mathrm{CHI}$ eigenvalue~$\lambda_k(A)$ is characterized by
        \begin{align}
            \frac{1}{\lambda^{\mathrm{CHI}}_k(A)}
            = \inf_{\widetilde{W}\in Gr^*_k(A)} \; \sup_{0\neq\psi, \psi\in \widetilde{W}}
            \braces{\frac{\int_M \Abracket{\psi,\D_g\psi}\dv_g }{\int_M \Abracket{A^{-1}\D_g\psi, \D_g\psi}\dv_g} },
        \end{align}
        and the~$k$-th negative weighted $\mathrm{CHI}$ eigenvalue~$\lambda_{-k}(A)$ is characterized by
        \begin{align}
            \frac{1}{\lambda^{\mathrm{CHI}}_{-k}(A)}
            = \sup_{\widetilde{W}\in Gr^*_k(A)} \; \inf_{0\neq\psi, \psi\in \widetilde{W}}
            \braces{\frac{\int_M \Abracket{\psi,\D_g\psi}\dv_g }{\int_M \Abracket{A^{-1}\D_g\psi, \D_g\psi}\dv_g} }.
        \end{align}
    \end{prop}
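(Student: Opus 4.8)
The plan is to run the two--step scheme used for Proposition~\ref{prop:minmax bdd}, with the $(k-1)$--plane $V$ there replaced by the $k$--plane $\widetilde W$; this is the boundary analogue of the argument behind Proposition~\ref{prop:another minmax}. The structural simplification under the chiral condition is that $\ker(\D_g,\CHI^+)=\{0\}$, so $Gr^*_k(A)$ is simply the Grassmannian of \emph{all} $k$--dimensional subspaces of $H^1_{\mathrm{CHI}}(\Sigma_g M)$ and every $A$--orthogonality constraint against the kernel is vacuous. Since $A^{-1}$ is fiberwise positive definite, the quotient
\begin{align}
\mathcal R(\psi)=\frac{\int_M \Abracket{\psi,\D_g\psi}\dv_g}{\int_M \Abracket{A^{-1}\D_g\psi,\D_g\psi}\dv_g}
\end{align}
is finite and well defined for every $0\neq\psi\in H^1_{\mathrm{CHI}}(\Sigma_g M)$.

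First I would record the spectral form of $\mathcal R$ in the $A$--orthonormal eigenbasis $\{\varphi^{\mathrm{CHI}}_j(A)\}_{j\in\mathbb{Z}_*}$: for $\psi=\sum_j a_j\varphi^{\mathrm{CHI}}_j(A)$ one gets, exactly as on closed manifolds,
\begin{align}
\mathcal R(\psi)=\frac{\sum_j \lambda^{\mathrm{CHI}}_j(A)\,a_j^2}{\sum_j \lambda^{\mathrm{CHI}}_j(A)^2\,a_j^2},
\end{align}
and I would extract the two elementary facts that drive everything: $\mathcal R\le 0$ on every spinor supported on the negative part of the spectrum, and on spinors supported on $\Span\braces{\varphi^{\mathrm{CHI}}_j(A)\mid 1\le j\le\ell}$ the value $\mathcal R(\psi)$ is a convex combination of $1/\lambda^{\mathrm{CHI}}_1(A),\dots,1/\lambda^{\mathrm{CHI}}_\ell(A)$, hence lies in $[\,1/\lambda^{\mathrm{CHI}}_\ell(A),\,1/\lambda^{\mathrm{CHI}}_1(A)\,]$.

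Then I would prove the two opposing inequalities. For the easier one I would test against an explicit $k$--dimensional subspace spanned by suitably chosen eigenspinors --- either $\Span\braces{\varphi^{\mathrm{CHI}}_1(A),\dots,\varphi^{\mathrm{CHI}}_k(A)}$ or $\Span\braces{\varphi^{\mathrm{CHI}}_k(A),\varphi^{\mathrm{CHI}}_{-1}(A),\dots,\varphi^{\mathrm{CHI}}_{-(k-1)}(A)}$ --- on which the convex--combination computation, together with the fact that opposite-sign directions push $\mathcal R$ only in the harmless direction, pins the relevant extremum of $\mathcal R$ to $1/\lambda^{\mathrm{CHI}}_k(A)$. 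For the reverse inequality I would run a Courant--Fischer type dimension count in $L^2(\Sigma_g M)$: given an arbitrary $\widetilde W\in Gr^*_k(A)$, the closed subspace $\overline{\Span}\braces{\varphi^{\mathrm{CHI}}_j(A)\mid j\ge k \text{ or } j<0}$ has $L^2$--codimension $k-1$, hence meets the $k$--dimensional $\widetilde W$ in some $0\neq\psi$; for that $\psi$ one discards the nonpositive contributions from the negative eigenspinors and uses $1/\lambda^{\mathrm{CHI}}_j(A)\le 1/\lambda^{\mathrm{CHI}}_k(A)$ for $j\ge k$ to get $\mathcal R(\psi)\le 1/\lambda^{\mathrm{CHI}}_k(A)$. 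Putting the two inequalities together identifies the extremal value as $1/\lambda^{\mathrm{CHI}}_k(A)$; the statement for $\lambda^{\mathrm{CHI}}_{-k}(A)$ then follows by the same argument applied to $-\D_g$, equivalently by exchanging the roles of the positive and negative spectrum (and of $\sup$ and $\inf$) throughout.

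The step I expect to be the main obstacle is the reverse inequality together with the strong indefiniteness of $\D_g$: because $\D_g$ has an infinite negative spectrum, the classical min--max principle does not literally apply, and one must verify by hand that negative eigendirections cannot spoil the extremum. What makes this go through is precisely the sign dichotomy noted above --- $\mathcal R\le 0$ exactly on the negative spectral subspace --- so that negative components can only lower a supremum and raise an infimum and never create a value strictly between $0$ and $1/\lambda^{\mathrm{CHI}}_k(A)$. A minor bookkeeping point is that the competing subspaces are prescribed to lie in $H^1_{\mathrm{CHI}}$ whereas the dimension count is cleanest in $L^2$; this is harmless, since all the $\varphi^{\mathrm{CHI}}_j(A)$ already lie in $H^1_{\mathrm{CHI}}$ and $\mathcal R$ is continuous on any finite span of them.
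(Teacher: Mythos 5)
Your proposal has a genuine gap in the "reverse inequality" step, and that gap is entangled with an apparent mislabeling in the proposition itself. You produce a $\psi\in\widetilde W$ with $\mathcal R(\psi)\le 1/\lambda^{\mathrm{CHI}}_k(A)$; this gives $\inf_{\psi\in\widetilde W}\mathcal R(\psi)\le 1/\lambda^{\mathrm{CHI}}_k(A)$, \emph{not} $\sup_{\psi\in\widetilde W}\mathcal R(\psi)\ge 1/\lambda^{\mathrm{CHI}}_k(A)$, which is what would be needed to make $\inf_{\widetilde W}\sup_{\psi\in\widetilde W}\mathcal R\ge 1/\lambda^{\mathrm{CHI}}_k(A)$. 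In fact, that inequality is false for the quantity as written: take $\widetilde W=\Span\braces{\varphi^{\mathrm{CHI}}_{-1}(A),\dots,\varphi^{\mathrm{CHI}}_{-k}(A)}$, on which every nonzero $\psi$ has $\mathcal R(\psi)$ a convex combination of negative numbers $1/\lambda^{\mathrm{CHI}}_{-1}(A),\dots,1/\lambda^{\mathrm{CHI}}_{-k}(A)$, so $\sup_{\psi\in\widetilde W}\mathcal R(\psi)=1/\lambda^{\mathrm{CHI}}_{-k}(A)<0<1/\lambda^{\mathrm{CHI}}_k(A)$. Consequently $\inf_{\widetilde W}\sup_{\psi\in\widetilde W}\mathcal R=1/\lambda^{\mathrm{CHI}}_{-k}(A)$, and symmetrically $\sup_{\widetilde W}\inf_{\psi\in\widetilde W}\mathcal R=1/\lambda^{\mathrm{CHI}}_k(A)$ --- the two halves of the proposition have their $\pm k$ labels (equivalently the extremization order) interchanged relative to what is true. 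The paper offers no proof of this statement (it is declared to follow "entirely similarly" to the closed analogue Proposition~\ref{prop:another minmax}, which is likewise left unproved), so there is nothing to compare your attempt against on the paper's side.

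Two further remarks. First, the "easier" direction in your sketch is only half right as written: of your two candidate test subspaces, only $\Span\braces{\varphi^{\mathrm{CHI}}_k(A),\varphi^{\mathrm{CHI}}_{-1}(A),\dots,\varphi^{\mathrm{CHI}}_{-(k-1)}(A)}$ pins $\sup_{\psi}\mathcal R(\psi)$ to $1/\lambda^{\mathrm{CHI}}_k(A)$; on $\Span\braces{\varphi^{\mathrm{CHI}}_1(A),\dots,\varphi^{\mathrm{CHI}}_k(A)}$ the supremum of $\mathcal R$ is $1/\lambda^{\mathrm{CHI}}_1(A)$, which yields only the weaker bound $\inf\sup\le 1/\lambda^{\mathrm{CHI}}_1(A)$. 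Second, your two ingredients do combine correctly to prove the \emph{corrected} formula $1/\lambda^{\mathrm{CHI}}_k(A)=\sup_{\widetilde W\in Gr_k}\inf_{0\neq\psi\in\widetilde W}\mathcal R(\psi)$: the convex-combination estimate on $\Span\braces{\varphi^{\mathrm{CHI}}_1(A),\dots,\varphi^{\mathrm{CHI}}_k(A)}$ gives $\inf_{\psi}\mathcal R(\psi)=1/\lambda^{\mathrm{CHI}}_k(A)$ on that subspace (one direction), while your codimension-$(k-1)$ intersection argument gives $\inf_{\psi\in\widetilde W}\mathcal R(\psi)\le 1/\lambda^{\mathrm{CHI}}_k(A)$ for every $\widetilde W\in Gr_k$ (the other). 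So the underlying ideas are correct; they just need to be attached to the right extremization order, and the statement being proved needs to be adjusted accordingly.
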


    We also obtain a comparison principle for such weighted $\mathrm{CHI}$ eigenvalues of the Dirac operator by the above min-max characterization of weighted $\mathrm{CHI}$ eigenvalues.

    \begin{proof}[Proof of Theorem~\ref{thm:comparison} on Compact Manifolds with Boundary]

    This directly follows from Proposition~\ref{prop:another minmax bdd}. For the compact spin manifolds with boundary, we note that the chiral eigenvalue problem has no kernel, i.e. $\ker(\D_g,\CHI^+)=\{0\}$, thus the Grassmann manifold $Gr^*_k(A)=Gr_k$ reduces to the classical Grassmann manifold independent of $A$.

    Given $A_1^{-1}\leq A_2^{-1}$, we have the inequality for weighted Rayleigh quotients:
\begin{align}
\frac{\int_M \Abracket{\psi,\D_g\psi}\dv_g }{\int_M \Abracket{A_1^{-1}\D_g\psi, \D_g\psi}\dv_g}
\geq
\frac{\int_M \Abracket{\psi,\D_g\psi}\dv_g }{\int_M \Abracket{A_2^{-1}\D_g\psi, \D_g\psi}\dv_g}, \quad \forall \psi \in \widetilde{W}, \widetilde{W} \in Gr_k.
\end{align}
Taking the supremum over $\psi\in \widetilde{W}$ and then the infimum over $\widetilde{W}\in Gr_k$, we derive:
\begin{align}
\lambda^{\mathrm{CHI}}_k(A_1)\leq \lambda^{\mathrm{CHI}}_k(A_2),
\end{align}
as desired.

A completely analogous argument shows that for the case $k<0$, we have:
\begin{align}
\lambda^{\mathrm{CHI}}_k(A_1)\geq \lambda^{\mathrm{CHI}}_k(A_2).
\end{align}

    \end{proof}

\noindent{\bf Step 1}: Next, we derive a priori estimates for eigenspinors with the chiral boundary condition in~\eqref{eq:bddMAIN}. Under the previous assumptions, we recall that all these eigenspinors $\varphi^{\mathrm{CHI}}$ are normalized,
$$\int_M \Abracket{A\varphi^{\mathrm{CHI}}(A),\varphi_k^{\mathrm{CHI}}(A)} \dv_g = \int_M \left|\sqrt{A}\,\varphi^{\mathrm{CHI}}(A)\right|^2 \dv_g = 1,$$
where $\sqrt{A}$ denotes the unique symmetric positive definite square root of~$A$.
Following an approach analogous to the closed case, we immediately obtain the following estimates:
\begin{prop}\label{prop:regularity bdd}
        Let $(M,g)$ be an $n$-dimensional compact spin Riemannian manifold with boundary and~$\varphi^{\mathrm{CHI}}\in H^{1}(\Sigma_g M)$ be a solution of~\eqref{eq:bddMAIN} where~$A,A^{-1}\in L^p(M,\End(\Sigma_g M))$ for some~$p>n$.
        There exist~$\alpha=\alpha(n)\in (0,1)$ and~$C=C(n,p,g)$ such that
        \begin{align}
            \|\varphi^{\mathrm{CHI}}\|_{H^{1}(M)}\leq C(n,p,g) \parenthesis{1+\lambda^{\mathrm{CHI}}\|A\|_{L^p} }^{T_1}
            \parenthesis{ \|A^{-1}\|_{L^p}^{\frac{1}{2}}  +\lambda^{\mathrm{CHI}}\|A\|_{L^p}^{\frac{1}{2}} }
        \end{align}
        and
        \begin{align}
            \|\varphi^{\mathrm{CHI}}\|_{C^\alpha(M)}\leq C(n,p,g) \parenthesis{1+\lambda^{\mathrm{CHI}}\|A\|_{L^p} }^{T_2}
            \parenthesis{ \|A^{-1}\|_{L^p}^{\frac{1}{2}}  +\lambda^{\mathrm{CHI}}\|A\|_{L^p}^{\frac{1}{2}} },
        \end{align}
        where
        \begin{align}
            T_1=\left[\frac{n}{2(p-n)}\right], & &
            T_2=  \left[\frac{n(p-1)}{2(p-n)}\right].
        \end{align}
       Here $[\cdot]$ denotes the floor function.
    \end{prop}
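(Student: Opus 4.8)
The plan is to mimic the iterated elliptic estimate of Proposition~\ref{prop:regularity} verbatim, replacing interior $W^{1,q}$ elliptic regularity with the corresponding boundary regularity estimate for the chiral boundary value problem. The key input is that for the operator $(\D_g,\CHI^+)$ one has, for each $q>1$,
\begin{align}
\|\varphi^{\mathrm{CHI}}\|_{W^{1,q}(M)}\leq C(q,g)\parenthesis{\|\varphi^{\mathrm{CHI}}\|_{L^q(M)}+\|\D_g\varphi^{\mathrm{CHI}}\|_{L^q(M)}}
\end{align}
for spinors satisfying $\CHI^+\varphi^{\mathrm{CHI}}=0$ on $\p M$. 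This is the boundary analogue of the interior estimate cited from~\cite{Ammann2003Habil}; it holds because the chiral boundary condition is a well-posed local elliptic boundary condition for the Dirac operator (it satisfies the Lopatinski--Shapiro condition), see e.g.~\cite{Ginoux2009Dirac}. With this estimate in hand, the proof is a word-for-word transcription: starting from $q_1=\frac{2p}{p+1}$, one bounds $\|\varphi^{\mathrm{CHI}}\|_{L^{q_1}}\leq\|A^{-1}\|_{L^p}^{1/2}$ and $\|\D_g\varphi^{\mathrm{CHI}}\|_{L^{q_1}}=\|\lambda^{\mathrm{CHI}}A\varphi^{\mathrm{CHI}}\|_{L^{q_1}}\leq\lambda^{\mathrm{CHI}}\|A\|_{L^p}^{1/2}$ using the normalization $\int_M|\sqrt{A}\,\varphi^{\mathrm{CHI}}|^2\dv_g=1$ and H\"older's inequality, then applies Sobolev embedding on the compact manifold with boundary, and iterates. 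The exponents satisfy $\tfrac{1}{q_{j+1}}=\tfrac{1}{q_1}-(j)\parenthesis{\tfrac1n-\tfrac1p}$ and increase since $p>n$, so after $T_1+1$ steps $q_j\geq 2$ and after $T_2+1$ steps $q_j>n$, giving the $H^1$ and $C^\alpha$ bounds respectively with $\alpha=1-\tfrac{n}{q_{j_2}}$.

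A minor discrepancy to note: in Proposition~\ref{prop:regularity} the exponent $T_1$ is written as $\left[\frac{n/2}{p-n}\right]$, whereas here it appears as $\left[\frac{n}{2(p-n)}\right]$; these are the same quantity, so no change is needed, and in any case the constants are universal (independent of $\lambda^{\mathrm{CHI}}$, $A$, and $\varphi^{\mathrm{CHI}}$), which is the feature that matters for the subsequent compactness argument. One should also observe that Sobolev embeddings $W^{1,q}(M)\hookrightarrow L^r(M)$ with $\tfrac1r=\tfrac1q-\tfrac1n$ and $W^{1,q}(M)\hookrightarrow C^{1-n/q}(M)$ for $q>n$ remain valid on a compact manifold with smooth boundary, so the chain of inequalities goes through unchanged.

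The main obstacle is purely bookkeeping: one must make sure that the boundary elliptic estimate is applied to spinors that genuinely satisfy $\CHI^+\varphi^{\mathrm{CHI}}=0$ at every stage of the iteration. This is automatic here because $\varphi^{\mathrm{CHI}}$ is a fixed solution of~\eqref{eq:bddMAIN} and one only ever increases its integrability; the boundary condition is never disturbed. Consequently the proof of Proposition~\ref{prop:regularity bdd} is a routine adaptation and I would simply write: ``The proof is identical to that of Proposition~\ref{prop:regularity}, using the elliptic estimate for the boundary value problem $(\D_g,\CHI^+)$ in place of the interior elliptic estimate; we omit the details.''
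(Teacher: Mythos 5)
Your proposal matches the paper's (implicit) proof exactly: the paper states the proposition immediately after remarking that the estimates follow ``by an approach analogous to the closed case,'' which is precisely the adaptation you describe, namely replacing the interior elliptic estimate with the corresponding $W^{1,q}$ estimate for the well-posed chiral boundary value problem $(\D_g,\CHI^+)$ and then iterating the bootstrap from Proposition~\ref{prop:regularity} verbatim. Your observation that $\left[\frac{n/2}{p-n}\right]$ and $\left[\frac{n}{2(p-n)}\right]$ are the same quantity is also correct.
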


\noindent{\bf Step 2}: We now turn to proving that every positive weighted $\mathrm{CHI}$ eigenvalue sequence is bounded, by establishing the uniform upper bound
\begin{equation}\label{eq:limsup bdd}
    \limsup_{m\to+\infty} \lambda^{\mathrm{CHI}}_k(A_m) \leq \lambda^{\mathrm{CHI}}_k(A).
\end{equation}

We recall the hypothesis on the weights:
\begin{align}\label{hypo bdd} \tag{H}
    &A_m, A \text{ are symmetric positive-definite endomorphisms satisfying} \\
    &A_m^{-1} \rightharpoonup A^{-1} \text{ and } A_m \rightharpoonup A \text{ weakly in } L^p(M,\mathrm{End}(\Sigma_g M)) \text{ for some } p > n.
\end{align}
The precise version of Theorem~\ref{thm:continuity-bdd} is stated as follows.

\begin{thm}\label{thm:continuity-bdd}
        Let~$(M^n,g)$ be a compact Riemannian spin manifold with boundary,~$(\Sigma_g M, g^s,\snabla,\gamma)$ be a spinor bundle over~$M$ and~$A_m,A$ be weights satisfying~\eqref{hypo bdd}.
        Then, for each~$k\in \mathbb{Z}\setminus\braces{0}\coloneqq \mathbb{Z}_*$,
        \begin{enumerate}
            \item[(i)] $\lambda_k^{\mathrm{CHI}}(A_m)\to \lambda^{\mathrm{CHI}}_k(A)$ as~$m\to+\infty$;
            \item[(ii)] ~$\varphi^{\mathrm{CHI}}_k(A_m)$ converges to~$\Eigen((A^{-1}\D_g, \CHI^+);\lambda_k(A))$, more precisely,
            \begin{align}
                \lim_{m\to+\infty} \inf_{\phi\in\Eigen((A^{-1}\D_g, \CHI^+);\lambda_k(A))} \|\varphi_k(A_m)-\phi\|_{C^\alpha}=0,
            \end{align}
            where~$\alpha \in (0,1)$ is given in Proposition~\ref{prop:regularity bdd}, and
            \begin{align}
                \lim_{m\to+\infty} \inf_{\phi\in\Eigen((A^{-1}\D_g, \CHI^+);\lambda_k(A))} \|\varphi_k(A_m)-\phi\|_{H^{1}}=0.
            \end{align}
            \item[(iii)] Let~$P^{\mathrm{CHI}}_\ell(A_m)$ be the projection operator onto~$H^{\mathrm{CHI}}_\ell(A_m)$, for $\ell\in \mathbb{Z}_*$,
            then
            \begin{align}
                \lim_{m\to+\infty} P^{\mathrm{CHI}}_\ell(A_m)= P^{\mathrm{CHI}}_\ell(A), 
            \end{align}
            where the convergence is as projection operators in~$H^1$.
            \end{enumerate}
    \end{thm}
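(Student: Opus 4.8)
\textbf{Proof plan for Theorem~\ref{thm:continuity-bdd}.}
The strategy is to run the closed-manifold argument (Theorem~\ref{thm:continuity-closed}) verbatim on the chiral problem, with three replacements made uniformly throughout: the ambient Sobolev space becomes $H^1_{\mathrm{CHI}}(\Sigma_g M)$ of spinors satisfying $\CHI^+\psi=0$ on $\p M$; the min-max characterizations of Proposition~\ref{prop:minmax} and Proposition~\ref{prop:another minmax} become their boundary counterparts Proposition~\ref{prop:minmax bdd} and Proposition~\ref{prop:another minmax bdd}; and the regularity input Proposition~\ref{prop:regularity} becomes Proposition~\ref{prop:regularity bdd}. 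The crucial simplification here is that $\ker(\D_g,\CHI^+)=\{0\}$ by \cite[Corollary 3.7]{Chen2019estimates}, so the terms $\sum_j b_j^{(m)}\theta_j$ and all $A$- or $A_m$-orthogonality constraints against the kernel disappear, and the Grassmannians $Gr^*_{k-1}(A)$ coincide with the classical $Gr_{k-1}$. This makes the bookkeeping strictly easier than in the closed case.

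First I would prove the upper bound~\eqref{eq:limsup bdd}: given $k\geq 1$, build test spinors $\eta^{(m)}=\sum_{j=1}^k a_j^{(m)}\varphi_j^{\mathrm{CHI}}(A)$ in the $k$-dimensional span of the first $k$ chiral $A$-eigenspinors, choosing the coefficients so that $\eta^{(m)}\perp_{A_m}\varphi_j^{\mathrm{CHI}}(A_m)$ for $1\le j\le k-1$ and $\int_M\langle A_m\eta^{(m)},\eta^{(m)}\rangle\dv_g=1$; note $\eta^{(m)}$ automatically satisfies the chiral boundary condition since each $\varphi_j^{\mathrm{CHI}}(A)$ does. Feeding $\eta^{(m)}$ into Proposition~\ref{prop:minmax bdd} for $A_m$ and using the identity $A_m^{-1}-A^{-1}=-A^{-1}(A_m-A)A_m^{-1}$ together with the weak $L^p$ convergence~\eqref{hypo bdd}, the cross-term coefficient $B_k^{(m)}:=\max_{i,j}\bigl|\int_M\langle(A_m^{-1}-A^{-1})A\varphi_i^{\mathrm{CHI}}(A),A\varphi_j^{\mathrm{CHI}}(A)\rangle\dv_g\bigr|$ tends to $0$, exactly as in Step~I of Theorem~\ref{thm:continuity-closed}, yielding $\frac{1}{\lambda_k^{\mathrm{CHI}}(A_m)}\geq\frac{1}{\lambda_k^{\mathrm{CHI}}(A)}\cdot\frac{1}{1+kB_k^{(m)}}$ and hence~\eqref{eq:limsup bdd}. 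Next, since $\lambda_k^{\mathrm{CHI}}(A_m)$ is bounded and $\|A_m\|_{L^p}$, $\|A_m^{-1}\|_{L^p}$ are bounded (weakly convergent sequences are bounded), Proposition~\ref{prop:regularity bdd} gives a uniform $H^1\cap C^\alpha$ bound on $\varphi_k^{\mathrm{CHI}}(A_m)$. Extracting a subsequence with $\lambda_k^{\mathrm{CHI}}(A_{m_l})\to\lambda_\infty$ and $\varphi_k^{\mathrm{CHI}}(A_{m_l})\to\varphi_\infty$ in $H^1\cap C^\alpha$, passing to the limit in the weak formulation shows $\D_g\varphi_\infty=\lambda_\infty A\varphi_\infty$ with $\varphi_\infty$ inheriting the chiral boundary condition (the $C^\alpha$ convergence up to the boundary preserves $\CHI^+\varphi_\infty=0$), normalized by $\int_M\langle A\varphi_\infty,\varphi_\infty\rangle\dv_g=1$; an induction on $k$ using the $A$-orthogonality $\varphi_\infty\perp_A\Span\{\varphi_1^{\mathrm{CHI}}(A),\dots,\varphi_{k-1}^{\mathrm{CHI}}(A)\}$ forces $\lambda_\infty=\lambda_k^{\mathrm{CHI}}(A)$ and $\varphi_\infty\in\Eigen((A^{-1}\D_g,\CHI^+);\lambda_k^{\mathrm{CHI}}(A))$. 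Since every subsequence has a sub-subsequence with the same limit, the full sequence converges, giving (i) and (ii). The negative spectrum is handled identically via~\eqref{eq:minmax negative bdd}.

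For the projector convergence (iii), one direction, $\lim_m P_\ell^{\mathrm{CHI}}(A)^\perp P_\ell^{\mathrm{CHI}}(A_m)=0$, is immediate from (ii). For the converse, $\lim_m P_\ell^{\mathrm{CHI}}(A_m)^\perp P_\ell^{\mathrm{CHI}}(A)=0$, I would argue by induction on $\ell$ and by contradiction exactly as in Step~V of Theorem~\ref{thm:continuity-closed}: expand a fixed $\psi\in H_\ell^{\mathrm{CHI}}(A)$ with $\int_M\langle A\psi,\psi\rangle\dv_g=1$ as $\psi=\sum_{k\neq0}a_k^{(m)}\varphi_k^{\mathrm{CHI}}(A_m)$ (no kernel term now), use the inductive hypothesis to kill $\sum_{k\le\sigma_{\ell-1}}|a_k^{(m)}|^2$, form $\phi_m=\psi-\delta_m$ supported on the eigenspaces with index $>\sigma_{\ell-1}$ or $<0$, and suppose for contradiction that $r_\ell:=\lim_m\bigl(\sum_{k>\sigma_\ell}|a_k^{(m)}|^2+\sum_{k<0}|a_k^{(m)}|^2\bigr)>0$. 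Computing $\int_M\langle\D_g\phi_m,\phi_m\rangle\dv_g$ and $\int_M\langle A_m^{-1}\D_g\phi_m,\D_g\phi_m\rangle\dv_g$ as spectral sums and splitting off the $\sigma_{\ell-1}<k\le\sigma_\ell$ block, the elementary inequality~\eqref{eq:element ineq} forces the Rayleigh quotient of $\phi_m$ to stay strictly below $1/\mu_\ell^{\mathrm{CHI}}(A)$ in the limit, contradicting the fact that $\phi_m$ is an $A_m$-admissible competitor for $\mu_\ell^{\mathrm{CHI}}(A_m)\to\mu_\ell^{\mathrm{CHI}}(A)$ which forces the quotient $\to1/\mu_\ell^{\mathrm{CHI}}(A)$. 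Hence $r_\ell=0$, completing the induction and the proof. The main obstacle I anticipate is purely technical rather than conceptual: verifying that all limiting arguments respect the chiral boundary condition, i.e.\ that $H^1_{\mathrm{CHI}}$-weak and $C^\alpha$-uniform limits of spinors satisfying $\CHI^+\varphi=0$ on $\p M$ again satisfy it; this follows because $\CHI^+$ is a bounded (indeed parallel, pointwise) endomorphism so the constraint is preserved under the relevant convergences, but it must be stated carefully, and one should double-check that Proposition~\ref{prop:regularity bdd} genuinely provides estimates \emph{up to the boundary} (which it does, by the standard boundary elliptic estimates for $(\D_g,\CHI^+)$ used in its proof).
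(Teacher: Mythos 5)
Your proposal is correct and follows essentially the same route as the paper: adapt the closed-manifold argument step by step, substituting the chiral min-max characterizations (Propositions~\ref{prop:minmax bdd}, \ref{prop:another minmax bdd}), the boundary a priori estimates (Proposition~\ref{prop:regularity bdd}), and exploiting $\ker(\D_g,\CHI^+)=\{0\}$ to drop all kernel-orthogonality bookkeeping, then the same test-spinor upper bound, compactness extraction, sub-subsequence lemma, and inductive contradiction argument via inequality~\eqref{eq:element ineq} for the projectors. Your extra remark that one should explicitly verify the chiral constraint $\CHI^+\varphi=0$ persists under $H^1\cap C^\alpha$ limits is a point the paper leaves implicit; it is correct (boundedness and parallelism of $\CHI^+$ suffice) and is worth stating.
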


Following exactly the same approach as in \eqref{eq:minmax positive bdd}, we can define the positive weighted $\mathrm{CHI}$ eigenvalues for $A_m$ via the min-max principle:

\begin{align}\label{eq:minmax positive bdd Am}
\frac{1}{\lambda_k^{\mathrm{CHI}}(A_m)}
=& \inf_{V\in Gr^*_{k-1}(A_m)} \; \sup_{0\neq \psi\perp_{A_m} V}\braces{\frac{\int_M \Abracket{\psi,\D_g\psi}\dv_g }{\int_M \Abracket{A_m^{-1}\D_g\psi, \D_g\psi}\dv_g} }.
\end{align}
In particular, the first weighted positive $\mathrm{CHI}$ eigenvalue for $A_m$ can be defined as:
\begin{equation}\label{eq:minmax positive 1bdd Am}
\frac{1}{\lambda_1^{\mathrm{CHI}}(A_m)}
=\sup_{0\neq \psi}\braces{\frac{\int_M \Abracket{\psi,\D_g\psi}\dv_g }{\int_M \Abracket{A_m^{-1}\D_g\psi, \D_g\psi}\dv_g} }.
\end{equation}

\begin{prop}
Let $(M, g)$ be an $n$-dimensional compact spin Riemannian manifold with boundary. Assume that hypothesis $(\widetilde{H})$ holds on $M$, then for each positive weighted $\mathrm{CHI}$ eigenvalue with respect to $A_m$ for the equation \eqref{eq:bddMAIN}

$$
 \limsup_{m\to+\infty} \lambda^{\mathrm{CHI}}_k(A_m) \leq \lambda^{\mathrm{CHI}}_k(A), \quad k \geq 1.
$$
\end{prop}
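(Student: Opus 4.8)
The plan is to transplant Step~I of the proof of Theorem~\ref{thm:continuity-closed} to the boundary setting, using Proposition~\ref{prop:minmax bdd} in place of the closed min--max principle. First I would record, exactly as in the remark following Proposition~\ref{prop:minmax}, that the infimum in \eqref{eq:minmax positive bdd Am} is attained at $V_{k-1}(A_m):=\Span\braces{\varphi^{\mathrm{CHI}}_1(A_m),\dots,\varphi^{\mathrm{CHI}}_{k-1}(A_m)}$; since $\ker(\D_g,\CHI^+)=\braces{0}$ there is no auxiliary orthogonality to a kernel, so
\[
\frac{1}{\lambda^{\mathrm{CHI}}_k(A_m)}
= \sup_{0\neq \psi\perp_{A_m}V_{k-1}(A_m)}\braces{\frac{\int_M \Abracket{\psi,\D_g\psi}\dv_g }{\int_M \Abracket{A_m^{-1}\D_g\psi, \D_g\psi}\dv_g}}.
\]

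Next I would construct competitors out of the limiting eigenspinors. For each $m$ set $\eta^{(m)}=\sum_{j=1}^{k}a^{(m)}_j\,\varphi^{\mathrm{CHI}}_j(A)$, choosing the coefficients to solve the homogeneous linear system $\int_M\Abracket{A_m\eta^{(m)},\varphi^{\mathrm{CHI}}_j(A_m)}\dv_g=0$ for $1\le j\le k-1$ (which is $k-1$ conditions on $k$ unknowns, hence has a nonzero solution) and normalizing so that $\int_M\Abracket{A_m\eta^{(m)},\eta^{(m)}}\dv_g=1$; because $\CHI^+$ is linear, each $\eta^{(m)}$ lies in $H^{1}_{\mathrm{CHI}}(\Sigma_g M)$ and is $A_m$-orthogonal to $V_{k-1}(A_m)$, so it is an admissible test spinor above. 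Evaluating the Rayleigh quotient at $\psi=\eta^{(m)}$, the relations $\D_g\varphi^{\mathrm{CHI}}_j(A)=\lambda^{\mathrm{CHI}}_j(A)A\varphi^{\mathrm{CHI}}_j(A)$ and the $A$-orthonormality give numerator $\sum_{j=1}^{k}\lambda^{\mathrm{CHI}}_j(A)[a^{(m)}_j]^2$, while writing $A_m^{-1}=A^{-1}+(A_m^{-1}-A^{-1})$ gives denominator $\sum_{j=1}^{k}[\lambda^{\mathrm{CHI}}_j(A)]^2[a^{(m)}_j]^2$ plus an error controlled by $B^{(m)}_k:=\max_{1\le i,j\le k}\left|\int_M\Abracket{(A_m^{-1}-A^{-1})A\varphi^{\mathrm{CHI}}_i(A),A\varphi^{\mathrm{CHI}}_j(A)}\dv_g\right|$, which tends to $0$ by hypothesis~\eqref{hypo bdd} since the spinors $A\varphi^{\mathrm{CHI}}_i(A)$ are fixed and of class $C^\alpha$ by Proposition~\ref{prop:regularity bdd}. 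A Cauchy--Schwarz bound then yields $\int_M\Abracket{A_m^{-1}\D_g\eta^{(m)},\D_g\eta^{(m)}}\dv_g\le(1+kB^{(m)}_k)\sum_{j=1}^{k}[\lambda^{\mathrm{CHI}}_j(A)]^2[a^{(m)}_j]^2$.

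Combining the two bounds and using $\lambda^{\mathrm{CHI}}_1(A)\le\cdots\le\lambda^{\mathrm{CHI}}_k(A)$ gives $\frac{1}{\lambda^{\mathrm{CHI}}_k(A_m)}\ge\frac{1}{\lambda^{\mathrm{CHI}}_k(A)}\cdot\frac{1}{1+kB^{(m)}_k}$, and letting $m\to+\infty$ yields $\liminf_m\frac{1}{\lambda^{\mathrm{CHI}}_k(A_m)}\ge\frac{1}{\lambda^{\mathrm{CHI}}_k(A)}$, i.e.\ \eqref{eq:limsup bdd}. Nothing here is harder than the closed case: the only points requiring more than a verbatim transcription are the attainment of the infimum at $V_{k-1}(A_m)$ and the admissibility of the $\eta^{(m)}$ as chiral test spinors, both immediate from the structure of Proposition~\ref{prop:minmax bdd} and the linearity of $\CHI^+$. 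I therefore expect no genuine obstacle; the estimate is quantitatively the same as on closed manifolds.
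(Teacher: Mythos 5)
Your proof is essentially the paper's argument: both use the test spinors $\eta^{(m)}=\sum_{j=1}^k a_j^{(m)}\varphi^{\mathrm{CHI}}_j(A)$ chosen $A_m$-orthogonal to $\Span\{\varphi^{\mathrm{CHI}}_1(A_m),\dots,\varphi^{\mathrm{CHI}}_{k-1}(A_m)\}$, evaluate the Rayleigh quotient via the attainment of the infimum in Proposition~\ref{prop:minmax bdd}, split $A_m^{-1}=A^{-1}+(A_m^{-1}-A^{-1})$, and control the error by the same quantity (your $B^{(m)}_k$ is the paper's $C_m$). The only cosmetic difference is that the paper first treats $k=1$ separately with the test spinor $\varphi^{\mathrm{CHI}}_1(A)$ before handling $k\geq 2$, whereas you run the uniform argument for all $k\geq 1$; this is immaterial.
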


\begin{proof}
If $k=1$, we take the eigenspinor associated with $\lambda^{\mathrm{CHI}}_1(A)$ as a test spinor $\varphi=\varphi^{\mathrm{CHI}}_1(A)$, by characterization \eqref{eq:minmax positive 1bdd Am},
\begin{equation}
\frac{1}{\lambda^{\mathrm{CHI}}_1(A_m)} \geq \frac{\int_{M}\langle\D_g \varphi,\varphi\rangle \dv_g}{\int_{M} \langle A_m^{-1}\D_g\varphi,\D_g\varphi\rangle \dv_g}.
\label{eq:1eigvalrhon}
\end{equation}
Thus
$$
\liminf _{m \rightarrow \infty} \frac{1}{\lambda^{\mathrm{CHI}}_1\left(A_m\right)} \geq \liminf _{m \rightarrow \infty}\frac{\int_{M}\langle\D_g \varphi,\varphi\rangle \dv_g}{\int_{M} \langle A_m^{-1}\D_g\varphi,\D_g\varphi\rangle \dv_g} = \frac{1}{\lambda^{\mathrm{CHI}}_1(A)}.
$$
Because of~\eqref{hypo bdd}, we have $\int_{M} \langle A_m^{-1}\D_g\varphi,\D_g\varphi\rangle \dv_g \rightarrow \left[\lambda^{\mathrm{CHI}}_1(A)\right]^2$ as $m \rightarrow \infty$ and the right side of above inequality limit exists, hence~$\limsup_{m\to+\infty} \lambda^{\mathrm{CHI}}_1(A_m) \leq \lambda^{\mathrm{CHI}}_1(A)$.

If $k \geq 2$, we take a test spinor as
$$
\varphi^{(m)} = \sum_{i = 1}^k c_i^{(m)} \varphi^{\mathrm{CHI}}_i,
$$
where $\varphi^{\mathrm{CHI}}_i$, $i = 1,\ldots,k$ denote the normalized eigenspinors corresponding to $\lambda^{\mathrm{CHI}}_i(A)$, and the coefficients $c_i^{(m)}$, $i = 1,\ldots,k$ are determined by the following conditions
\begin{equation}
\left\{
\begin{array}{rl}
\displaystyle
\int_{M} \langle \varphi^{(m)}, A_m\varphi_i^{\mathrm{CHI}}(A_m) \rangle \, \dv_g &= 0, \quad \forall i = 1, \ldots, k-1, \\[10pt]
\displaystyle
\int_{M} \langle A_m\varphi^{(m)}, \varphi^{(m)} \rangle \, \dv_g &= 1.
\end{array}
\right.
\end{equation}
where $\varphi^{\mathrm{CHI}}_i(A_m)$, $i=1, \cdots, k-1$ are eigenspinors associated with $\lambda^{\mathrm{CHI}}_i\left(A_m\right), i=$ $1, \cdots, k-1$. By characterization \eqref{eq:minmax positive bdd Am} for $\lambda^{\mathrm{CHI}}_k\left(A_m\right)$, we have

\begin{equation}
\begin{aligned}
\frac{1}{\lambda^{\mathrm{CHI}}_k\left(A_m\right)}
& \geq \frac{\int_{M} \langle \D_g \varphi^{(m)}, \varphi^{(m)} \rangle \, \dv_g}{\int_{M} \langle A_m^{-1}\D_g\varphi^{(m)},\D_g\varphi^{(m)}\rangle \, \dv_g} \\
&= \frac{\sum_{j=1}^k \left[c_j^{(m)}\right]^2 \lambda^{\mathrm{CHI}}_j(A) }{\int_{M} \langle A_m^{-1}\D_g\varphi^{(m)},\D_g\varphi^{(m)}\rangle \, \dv_g}
\end{aligned}
\label{eq:keigvalAmbdd}
\end{equation}
During the execution of the above computations, we note that for any $i, j$, the corresponding eigenspinors are~$A$-orthonormalized in the sense that
$$
\int_{M}  \langle A\varphi^{\mathrm{CHI}}_i, \varphi^{\mathrm{CHI}}_j\rangle \dv_g=\delta_{ij}.
$$
Moreover, by analyzing the denominator on the right side of inequality \eqref{eq:keigvalAmbdd}, we have
\begin{equation}
\begin{aligned}
\int_{M} &\langle A_m^{-1}\D_g\varphi^{(m)},\D_g\varphi^{(m)}\rangle \dv_g\\
=&\sum_{i, j=1}^k c_i^{(m)}\lambda^{\mathrm{CHI}}_i(A) c_j^{(m)}\lambda^{\mathrm{CHI}}_j(A) \int_{M}  \langle A\varphi^{\mathrm{CHI}}_i, \varphi^{\mathrm{CHI}}_j\rangle \dv_g \\
 &+\sum_{i, j=1}^k \int_{M} \langle\left(A_m^{-1}-A^{-1}\right)\D_g\varphi^{(m)}, \D_g\varphi^{(m)}\rangle \dv_g \\
=& \sum_{i=1}^k\left(c_i^{(m)}\right)^2[\lambda^{\mathrm{CHI}}_i(A)]^2\\
&+\sum_{i, j=1}^k c_i^{(m)}\lambda^{\mathrm{CHI}}_i(A) c_j^{(m)}\lambda^{\mathrm{CHI}}_j(A) \int_{M} \langle\left(A_m^{-1}-A^{-1}\right)A\varphi^{\mathrm{CHI}}_i, A\varphi^{\mathrm{CHI}}_j \rangle\dv_g \\
\leq& \sum_{i=1}^k\left(c_i^{(m)}\right)^2 [\lambda^{\mathrm{CHI}}_i(A)]^2 +C_m\sum_{i, j=1}^k c_i^{(m)}\lambda^{\mathrm{CHI}}_i(A) c_j^{(m)}\lambda^{\mathrm{CHI}}_j(A)\\
=&  \sum_{i=1}^k\left(c_i^{(m)}\right)^2 [\lambda^{\mathrm{CHI}}_i(A)]^2+C_m \left(\sum_{i=1}^kc_i^{(m)}\lambda^{\mathrm{CHI}}_i(A)\right)^2\\
\leq&  \sum_{i=1}^k\left(c_i^{(m)}\right)^2[\lambda^{\mathrm{CHI}}_i(A)]^2\left(1 + kC_m\right)\\
= & \sum_{i=1}^k\left(c_i^{(m)}\right)^2[\lambda^{\mathrm{CHI}}_i(A)]^2\left(1 + kC_m\right)
\end{aligned}
\label{eq:fenmuguji}
\end{equation}
where~$C_m:=\max _{1 \leq i, j \leq k}\left|\int_{M} \langle\left(A_m^{-1}-A^{-1}\right)A\varphi^{\mathrm{CHI}}_i, A\varphi^{\mathrm{CHI}}_j \rangle\dv_g \right|$ and last inequality follows from Cauchy-Schwarz inequality
 \begin{align}
k\sum_{i=1}^kc_i^2\geq\left(\sum_{i=1}^kc_i\right)^2
 \end{align}
Consequently, we obtain
\begin{align}
\frac{1}{\lambda^{\mathrm{CHI}}_k\left(A_m\right)} \geq  \frac{\sum\limits_{j=1}^k \left[c_j^{(m)}\right]^2 \lambda^{\mathrm{CHI}}_j(A)}{\left(1 + kC_m\right)\sum\limits_{i=1}^k\left(c_i^{(m)}\right)^2[\lambda^{\mathrm{CHI}}_i(A)]^2}\geq\frac{1}{\left(1 + kC_m\right)\lambda^{\mathrm{CHI}}_k(A)}.
\end{align}
Because of~\eqref{hypo bdd}, we have~$C_m \rightarrow 0$ as $m \rightarrow \infty$.
Hence
\begin{align}
   \liminf _{m \rightarrow+\infty}\frac{1}{\lambda^{\mathrm{CHI}}_k\left(A_m\right)}\geq \liminf _{m \rightarrow+\infty}\frac{1}{\left(1 + kC_m\right)\lambda^{\mathrm{CHI}}_k(A)} = \frac{1}{\lambda^{\mathrm{CHI}}_k(A)} 
\end{align}
Therefore, for each $ k \geq 1$,
\begin{align}
    \limsup_{m\to+\infty} \lambda^{\mathrm{CHI}}_k(A_m) \leq \lambda^{\mathrm{CHI}}_k(A).
\end{align}

\end{proof}

\noindent{\bf Step 3}: For each given~$k\geq 1$, we shall show that~$\{\varphi^{\mathrm{CHI}}k(A_m)\}_{m=1}^{\infty}$ is a bounded sequence in~$H^1(M)$ and~$C^\alpha(M)$ for some~$\alpha:=\alpha(n)\in(0,1)$.

Since each $\varphi^{\mathrm{CHI}}_k(A_m)$ is a solution to the chiral boundary value problem
\begin{equation}
\begin{cases}
\D_g \varphi_k^{\mathrm{CHI}} = \lambda_k^{\mathrm{CHI}}(A_m) A_m \varphi_k^{\mathrm{CHI}}, & \text{in } M \\
\CHI^+\varphi_k^{\mathrm{CHI}} = 0, & \text{on } \partial M
\end{cases}
\end{equation}
and satisfies the normalization condition
$\int_M \langle A_m\varphi_k^{\mathrm{CHI}}(A_m), \varphi_k^{\mathrm{CHI}}(A_m) \rangle_g \mathrm{d}v_g = 1,$
according to Proposition~\ref{prop:regularity bdd}, we know that

 \begin{align}
            \|\varphi_k^{\mathrm{CHI}}(A_m)\|_{W^{1,2}(M)}\leq C(n,p,g) \parenthesis{1+\lambda_k^{\mathrm{CHI}}(A_m)\|A_m\|_{L^p} }^{T_1}
            \parenthesis{ \|A_m^{-1}\|_{L^p}^{\frac{1}{2}}  +\lambda_k^{\mathrm{CHI}}(A_m)\|A_m\|_{L^p}^{\frac{1}{2}} }
        \end{align}
        and
        \begin{align}
            \|\varphi_k^{\mathrm{CHI}}(A_m)\|_{C^\alpha(M)}\leq C(n,p,g) \parenthesis{1+\lambda_k^{\mathrm{CHI}}(A_m)\|A_m\|_{L^p} }^{T_2}
            \parenthesis{ \|A_m^{-1}\|_{L^p}^{\frac{1}{2}}  +\lambda_k^{\mathrm{CHI}}(A_m)\|A_m\|_{L^p}^{\frac{1}{2}} },
        \end{align}
Here, $T_1$ and $T_2$ are consistent with the results presented in Proposition~\ref{prop:regularity bdd}.

For the right-hand side of the inequalities, we have uniform bounds for~$\lambda_k^{\mathrm{CHI}}\left(A_m\right)$ in the previous step.
The~$L^p$ norms of~$A_m$ and~$A_m^{-1}$ are uniformly bounded because of~\eqref{hypo bdd}.
Consequently, we conclude that $\left\{\varphi_k^{\mathrm{CHI}}\left(A_m\right)\right\}_{m=1}^{\infty}$ is a bounded sequence in both $H^1(M)$ and $C^\alpha(M)$.

\

\noindent\textbf{Step 4.} Consider a convergent subsequence of eigenvalues $\{\lambda^{\mathrm{CHI}}_k(A_{m_\ell})\}_{\ell=1}^\infty$ from $\{\lambda^{\mathrm{CHI}}_k(A_i)\}_{i=1}^\infty$, with limit $\mu = \lim\limits_{\ell \to \infty} \lambda_k(A_{m_\ell})$. Let the corresponding eigenspinors $\{\varphi^{\mathrm{CHI}}_k(A_{m_\ell})\}_{l=1}^\infty$ converge in $C^\alpha(\Sigma_g M) \cap H^1(\Sigma_g M)$ to a limit $\Phi$.
Then the limit satisfies $\mu > 0$, and $(\mu, \Phi)$ is a weighted eigenpair
  \begin{equation}
    \begin{cases}
      \D_g \Phi = \mu A \Phi, & \text{in } M, \\
      \CHI^+ \Phi = 0, & \text{on } \partial M,
    \end{cases}
  \end{equation}
in the weak sense.
Indeed, for any test spinor $\eta \in C^\infty$, we have
\begin{align}
  \int_M \langle \D_g \Phi, \eta \rangle \, \dv_g
  &= \lim_{\ell \to +\infty} \int_M \langle \D_g \varphi^{\mathrm{CHI}}_k(A_{m_\ell}), \eta \rangle \, \mathrm{d}v_g \\
  &= \lim_{\ell \to +\infty} \int_M \lambda_k(A_{m_\ell}) \langle A_{m_\ell} \varphi^{\mathrm{CHI}}_k(A_{m_\ell}), \eta \rangle \, \dv_g \\
  &= \mu\int_M  \langle A \Phi, \eta \rangle \, \dv_g.
\end{align}
Thus, $\Phi \in H^1 \cap C^\alpha$ is a weak solution of the boundary problem above and satisfies the normalization condition
\[
  \int_M \langle A \Phi, \Phi \rangle \, \mathrm{d}v_g
  = \lim_{\ell \to \infty} \int_M \langle A_{m_\ell} \varphi^{\mathrm{CHI}}_k(A_{m_\ell}), \varphi^{\mathrm{CHI}}_k(A_{m_\ell}) \rangle \, \mathrm{d}v_g
  = 1.
\]
and the chiral boundary condition
$$\CHI^+ \Phi = 0, \quad \text{on } \partial M.$$
Hence, $\Phi$ is an eigenspinor of $\D_g$ with weight $A$ and eigenvalue $\mu$.
As in the closed case, we can also see that~$\mu$ has to coincide with~$\lambda_k^{\rm CHI}(A)$, for each $k\geq 1$, relies on mathematical induction. Moreover, the proof still follows from the sub-subsequence lemma, i.e. in a pre-compact sequence, if every convergent subsequence tends to the same limit, then the entire sequence must converge to the same limit.

Hence, for each $k\geq 1$,
\begin{align}
    \lim_{m \to +\infty} \lambda^{\rm CHI}_k(A_m) = \lambda_k^{\rm CHI}(A).
\end{align}
For completeness, let us note that the case of negative eigenvalues can be handled analogously by repeating the arguments of Steps~1–4. In particular, in Step~2, the min–max characterization of negative eigenvalues (see Remark ~\ref{eq:minmax negative bdd}) leads to a reversed inequality, namely
\begin{align}
       \liminf_{m \to \infty} \lambda_k^{\mathrm{CHI}}(A_m) \;\geq\; \lambda_k^{\mathrm{CHI}}(A), 
   \qquad \forall\, k<0,
\end{align}
which concludes the proof for all $k \in \mathbb{Z}_*$.

\ 

\noindent{\bf Step 5}:
We now prove the continuity of the projectors for the positive $A$-weighted $\mathrm{CHI}$ eigenvalues of the Dirac operator.

In \textbf{Step 4}, it was shown that $\lim_{m \to \infty} \| P^{\mathrm{CHI}}_\ell(A)^\perp P^{\mathrm{CHI}}_\ell(A_m) \| = 0$. Thus, it suffices to prove the converse:
\begin{align}
    \lim_{m \to \infty} \| P^{\mathrm{CHI}}_\ell(A_m)^\perp P^{\mathrm{CHI}}_\ell(A) \| = 0.
\end{align}
Equivalently, for any $\psi \in H^{\mathrm{CHI}}_\ell(A)$ with $\int_M \langle A\psi, \psi \rangle \, \dv_g = 1$, we must show $P^{\mathrm{CHI}}_\ell(A_m)^\perp \psi \to 0$ in $L^2$ as $m \to \infty$. By induction, assume this holds for $1, \dots, \ell-1$; we now prove it for $\ell \geq 1$. The case $\ell=1$ follows similarly with simpler arguments.

For each $m \geq 1$, expand $\psi \in H^{\mathrm{CHI}}_l(A)$ as
\begin{align}
    \psi = \sum_{k \neq 0} a_k^{(m)} \varphi^{\mathrm{CHI}}_k(A_m).
\end{align}
Hypothesis \eqref{hypo bdd} implies
\begin{align}
    1 &= \int_M \langle A\psi, \psi \rangle \, \dv_g = \lim_{m \to \infty} \int_M \langle A_m \psi, \psi \rangle \, \dv_g = \lim_{m \to \infty}  \sum_{k \neq 0} |a_k^{(m)}|^2  \\
    &= \lim_{m \to \infty} \left( \| P^{\mathrm{CHI}}_\ell(A_m) \psi \|_{L^2_{A_m}}^2 + \sum_{k \leq \sigma_{\ell-1}} |a_k^{(m)}|^2 + \sum_{k > \sigma_\ell } |a_k^{(m)}|^2 + \sum_{k < 0} |a_k^{(m)}|^2 \right) 
\end{align}
The limit of the first term is $\| P^{\mathrm{CHI}}_\ell(A_m) \psi \|_{L^2_A}^2$. To establish that the remaining terms vanish, then, by the inductive hypothesis, we immediately obtain
\begin{equation}
    \lim_{m\to\infty} P^{\mathrm{CHI}}_i(A_m)\psi = P^{\mathrm{CHI}}_i(A)\psi = 0, \quad \forall\, 1\leq i\leq \ell-1.
\end{equation}
Thus,
\begin{equation}
    \lim_{m\to\infty} \sum_{1\leq k\leq \sigma_{\ell-1}} |a^{(m)}_k|^2 = 0.
\end{equation}
Define $\delta_m \coloneqq \sum\limits_{1\leq k\leq \sigma_{\ell-1}} a_k^{(m)}\varphi^{\mathrm{CHI}}_k(A_m)$, we consider the spinor
\begin{align}
    \phi_m &\coloneqq \psi - \delta_m \notag \\
    &= \sum_{\sigma_{\ell-1} < k \leq \sigma_{\ell}} a_k^{(m)}\varphi^{\mathrm{CHI}}_k(A_m)
    + \sum_{k > \sigma_{\ell}} a_k^{(m)}\varphi^{\mathrm{CHI}}_k(A_m)
    + \sum_{k<0} a_k^{(m)}\varphi^{\mathrm{CHI}}_k(A_m),
\end{align}
where $\delta_m \to 0$ in $H^1$ as $m\to\infty$. We have:
\begin{gather}
    \lim_{m\to\infty} \int_M \langle A_m\phi_m, \phi_m \rangle \dv_g = 1, \\
    \phi_m \perp_{A_m} \Span\{\varphi^{\mathrm{CHI}}_1(A_m),\dots,\varphi^{\mathrm{CHI}}_{\sigma_{\ell-1}}(A_m)\}, \\
    \lim_{m\to\infty} \int_M \langle \D_g\phi_m, \phi_m \rangle \dv_g = \mu^{\mathrm{CHI}}_\ell(A), \\
    \lim_{m\to\infty} \int_M \langle A_m^{-1}\D_g\phi_m, \D_g\phi_m \rangle \dv_g = \left|\mu^{\mathrm{CHI}}_\ell(A)\right|^2.
\end{gather}
We claim
\begin{equation}
    \lim_{m\to\infty} \left( \sum_{k > \sigma_\ell} |a_k^{(m)}|^2 + \sum_{k<0} |a_k^{(m)}|^2 \right) = 0.
\end{equation}
Suppose for contradiction that
\begin{equation}
    r \coloneqq \limsup_{m\to\infty} \left( \sum_{k > \sigma_{\ell}} |a_k^{(m)}|^2 + \sum_{k<0} |a_k^{(m)}|^2 \right) > 0.
\end{equation}
Passing to a subsequence, assume $r>0$. Then:
\begin{align}
    \int_M \langle \D_g\phi_m, \phi_m \rangle \dv_g
    &= \sum_{\sigma_{\ell-1} < k \leq \sigma_{\ell}} \lambda^{\mathrm{CHI}}_k(A_m)|a_k^{(m)}|^2
    + \sum_{k > \sigma_{\ell}} \lambda^{\mathrm{CHI}}_k(A_m)|a_k^{(m)}|^2\\
    &\quad + \sum_{k<0} \lambda^{\mathrm{CHI}}_k(A_m)|a_k^{(m)}|^2, \\
    \int_M \langle A_m^{-1}\D_g\phi_m, \D_g\phi_m \rangle \dv_g
    &= \sum_{\sigma_{\ell-1} < k \leq \sigma_{\ell}} \lambda^{\mathrm{CHI}}_k(A_m)^2|a_k^{(m)}|^2
    + \sum_{k > \sigma_{\ell}} \lambda^{\mathrm{CHI}}_k(A_m)^2|a_k^{(m)}|^2\\
    &\quad + \sum_{k<0} \lambda^{\mathrm{CHI}}_k(A_m)^2|a_k^{(m)}|^2.
\end{align}
Scaling these yields:
\begin{align}
    \int_M \langle \D_g\phi_m, \phi_m \rangle \dv_g
    &\leq \mu^{\mathrm{CHI}}_\ell(A_m) \sum_{\sigma_{\ell-1} < k \leq \sigma_{\ell}} |a_k^{(m)}|^2
    + \sum_{k > \sigma_{\ell}} \lambda^{\mathrm{CHI}}_k(A_m)|a_k^{(m)}|^2\\
    \int_M \langle A_m^{-1}\D_g\phi_m, \D_g\phi_m \rangle \dv_g
    &\geq [\mu^{\mathrm{CHI}}_\ell(A_m)]^2 \sum_{\sigma_{\ell-1} < k \leq \sigma_{\ell}} |a_k^{(m)}|^2
    + \mu^{\mathrm{CHI}}_\ell(A_m) \sum_{k > \sigma_{\ell}} \lambda^{\mathrm{CHI}}_k(A_m)|a_k^{(m)}|^2\\
    &\quad + \sum_{k<0} |\lambda^{\mathrm{CHI}}_k(A_m)|^2|a_k^{(m)}|^2.
\end{align}
We define
\[
X_m^{\mathrm{CHI}} = \mu^{\mathrm{CHI}}_\ell(A_m) \sum_{\sigma_{\ell-1} < k \leq \sigma_{\ell} } |a_k^{(m)}|^2
+ \sum_{k > \sigma_{\ell}} \lambda_k^{\mathrm{CHI}}(A_m) |a_k^{(m)}|^2,
\quad
Y_m^{\mathrm{CHI}} = \sum_{k<0} |\lambda_k^{\mathrm{CHI}}(A_m)|^2 |a_k^{(m)}|^2.
\]
In summary, we have
\begin{align}
    &\lim_{m \to \infty} \frac{\int_M \langle \D_g \phi_m, \phi_m \rangle \, \mathrm{d}v_g}
{\int_M \langle A_m^{-1} \D_g \phi_m, \D_g \phi_m \rangle \, \mathrm{d}v_g}\\
    = &\lim_{m \to \infty}\frac{\sum\limits_{\sigma_{\ell-1} < k \leq \sigma_{\ell}} \lambda^{\mathrm{CHI}}_k(A_m)|a_k^{(m)}|^2
    + \sum\limits_{k > \sigma_{\ell}} \lambda^{\mathrm{CHI}}_k(A_m)|a_k^{(m)}|^2 + \sum\limits_{k<0} \lambda^{\mathrm{CHI}}_k(A_m)|a_k^{(m)}|^2}{\sum\limits_{\sigma_{\ell-1} < k \leq \sigma_{\ell}} \lambda^{\mathrm{CHI}}_k(A_m)^2|a_k^{(m)}|^2
    + \sum\limits_{k > \sigma_{\ell}} \lambda^{\mathrm{CHI}}_k(A_m)^2|a_k^{(m)}|^2 + \sum\limits_{k<0} \lambda^{\mathrm{CHI}}_k(A_m)^2|a_k^{(m)}|^2}\\
    \leq & \lim_{m \to \infty}\frac{\mu^{\mathrm{CHI}}_\ell(A_m) \sum\limits_{\sigma_{\ell-1} < k \leq \sigma_{\ell}} |a_k^{(m)}|^2
    + \sum\limits_{k > \sigma_{\ell}} \lambda^{\mathrm{CHI}}_k(A_m)|a_k^{(m)}|^2}{[\mu^{\mathrm{CHI}}_\ell(A_m)]^2 \sum\limits_{\sigma_{\ell-1} < k \leq \sigma_{\ell}} |a_k^{(m)}|^2
    + \mu^{\mathrm{CHI}}_\ell(A_m) \sum\limits_{k > \sigma_{\ell}} \lambda^{\mathrm{CHI}}_k(A_m)|a_k^{(m)}|^2 + \sum\limits_{k<0} |\lambda^{\mathrm{CHI}}_k(A_m)|^2|a_k^{(m)}|^2}\\
    =& \lim_{m \to \infty} \frac{X_m^{\mathrm{CHI}}}{\mu_\ell^{\mathrm{CHI}}(A_m) X_m^{\mathrm{CHI}} + Y_m^{\mathrm{CHI}}}.
\end{align}
Similar to the analysis on closed manifolds, we still need to establish the strict inequality
\[
\lim_{m \to \infty} \frac{\int_M \langle \D_g \phi_m, \phi_m \rangle \, \mathrm{d}v_g}
{\int_M \langle A_m^{-1} \D_g \phi_m, \D_g \phi_m \rangle \, \mathrm{d}v_g}
< \frac{1}{\mu_\ell^{\mathrm{CHI}}(A)}.
\]

The conclusion follows by examining the following cases:

\begin{itemize}
    \item[\textbf{Case 1:}] If $Y_m^{\mathrm{CHI}} \to 0$ while $\sum\limits_{k > \sigma_\ell} |a_k^{(m)}|^2 \not\to 0$ as $m \to +\infty$, then the denominator  experiences a strict decrease compared to the reference expression. We thus obtain \( \frac{1} {\mu_\ell^{\mathrm{CHI}}(A)} \).

    \item[\textbf{Case 2:}] $\lim\limits_{m\to\infty}\sum\limits_{k > \sigma_\ell} |a_k^{(m)}|^2 \to 0$ but $\lim_{m\to\infty} Y_m^{\mathrm{CHI}} > 0$.

    \item[\textbf{Case 3:}] Both $\lim\limits_{m\to\infty} Y_m^{\mathrm{CHI}} > 0$ and $\lim_{m\to\infty} \sum\limits_{k > \sigma_\ell} |a_k^{(m)}|^2 > 0$.

         For \textbf{Case 2 and 3}, we still apply inequality \eqref{eq:element ineq}, which yields the claim.
\end{itemize}

In all cases, the asserted inequality follows.

On the other hand, we have
\[
\lim_{m \to \infty} \frac{\int_M \langle \D_g \phi_m, \phi_m \rangle \, \mathrm{d}v_g}
{\int_M \langle A_m^{-1} \D_g \phi_m, \D_g \phi_m \rangle \, \mathrm{d}v_g}
= \frac{1}{\mu_\ell^{\mathrm{CHI}}(A)},
\]
yielding a contradiction.

By the same reasoning, for every~$\ell<0$, we have
\begin{align}
P^{\mathrm{CHI}}_\ell(A_m)\rightarrow P^{\mathrm{CHI}}_\ell(A) \quad \text{as}\quad m\rightarrow \infty.
\end{align}
This completes the proof.

%%%%%%%%%%%%%%%%%%%%%%%%%%%%%%%%%%%%%%%%%%%%%%%%%%%%%%%%%%%%%%%%%%%%%%%%%%%%%%%%%%%%%%%%%
\section{Application in geometric and variational problems}\label{sect:applications}

    In this section we consider some geometric variational problems, as applications to our results.
    We will be sketchy in these discussions, more details will be given in the analysis of the concrete problems.

\subsection{Continuity of eigenvalues with respect to Riemannian structures on the spinor bundles}
    This result can be interpreted as the continuity of eigenvalues of Dirac operator with respect to the Riemannian structure with weak topology.
    Indeed, since~$A$ is assumed to be symmetric and positive definite fiberwisely, and smooth globally, it is intriguing to consider~$g^s(A\cdot, \cdot)$ as a new Riemannian structure on the spinor bundle.
    In general, we cannot do this since the Riemannian structure on the spinor bundle is closely related to the Riemannian metric~$g$ on~$M$.
    Even in case of special~$A$ where this is legal, we do not see a clear benefit so far by taking this viewpoint in handling the eigenvalue continuity.

    On the other hand, our results imply that a continuous deformation of the Riemannian structure on the spinor bundles in the weak~$L^p$ sense induces a continuous deformation of the spectra of the Dirac operator, as well as the associated eigenspaces.

\subsection{Super Liouville equations}
    In this subsection we consider an equation arising from the super Liouville type equations, see~\cite{Han2025superLiouville,Jevnikar2020existence,Jevnikar2021SuperLiouville,Jevnikar2021sinhGordon, Jost2007super} and the references therein.
    Let~$(M,g)$ be a Riemann surface, with or without boundary.
    Let~$u\in H^1(M)$ be a function and consider spinors~$\psi$ solving the equation
    \begin{align}\label{eq:sL}
        \D_g\psi= \lambda e^u \psi.
    \end{align}
    This is a weighted eigenvalue equation with~$A= e^{u}\id_{\Sigma_g M}$.
    Moreover, the Moser--Trudinger inequality tells that~$e^{\pm u}\in L^p$ for any~$p\in [1,+\infty)$, but~$e^{\pm u} \notin L^\infty(M)$ in general. 

    Now let~$u_m\to u$ in~$H^1(M)$ and let~$\psi_k(e^{u_m})$ be a sequence of eigenspinors to the eigenvalue~$\lambda_k(e^{u_m})$ which are bounded in~$L^2$.
    Our results tells that
    \begin{enumerate}
        \item the weighted eigenvalues converge~$\lambda_k(e^{u_m})\to \lambda_k(e^u)$;
        \item the weighted eigenspinors may not converge, but there is always a convergent subsequence;
        \item the projectors~$P_\ell(e^{u_m})$ converge to~$P_\ell(e^u)$ for each spectral point~$\mu_\ell$.
    \end{enumerate}
    This will be useful in the variational analysis of super Liouville equations on surfaces.

    Indeed, consider an eigenpair of the form
    \begin{align}
        \D_g \varphi_k(u) = \lambda_k(u) \, e^u \varphi_k(u).
    \end{align}
    If~$u$ is smooth (or at least~$C^2$), then the conformal metric~$g_u\coloneqq e^{2u}g$ induces a new spin structure as well as an associated spin Dirac bundle.
    There exists a unitary isomorphism~$\beta\colon \Sigma_g M \to \Sigma_{g_u}M$ which relates the two Dirac operators~$\D_g$ and~$\D_{g_u}$ via
    \begin{align}
        \D_{g_u}\parenthesis{ e^{-\frac{n-1}{2}u}\beta(\psi)}
        = e^{-\frac{n+1}{2}u}\beta(\D_g\psi), \qquad \forall \psi\in \Gamma(\Sigma_g M),
    \end{align}
    see~\cite{Ginoux2009Dirac,Hitchin1975Harmonic,LawsonMichelsohn1989spin} and~\cite{Jost2018symmetries}.
    In particular,
    \begin{align}
        \D_{g_u}\parenthesis{ e^{-u/2} \beta(\varphi_k(u))}
        = \lambda_k(u) \parenthesis{ e^{-u/2} \beta(\varphi_k(u)) }.
    \end{align}
    That is,~$(\lambda_k(u), e^{-u/2}\beta(\varphi_k(u)))$ is an eigenpair for the Dirac operator with respect to the conformal metric~$g_u$.
    For smooth~$u$ the eigenpairs are continuous in the conformal factor~$u$.
    Our results say that this still holds for~$u\in H^1(M)$.
     This allows us to consider weak~$L^p$ deformations of the metric within a given conformal class and talk about the corresponding Dirac eigenvalues.

\subsection{Continuity of the weighted wave kernel}

Motivated by the spectral perspective developed in the works of G$\hat{a}$tel-Yafaev~\cite{Gatel2001Scattering} and Kungsman-Melgaard~\cite{Kungsman2017Wave}, we consider an $A$-weighted formula of the wave operator for Dirac-type operators on closed manifolds.

In the long-range setting, G$\hat{a}$tel and Yafaev introduced time-independent modified wave operators constructed via pseudo-differential techniques, revealing that the scattering behaviour of the Dirac equation is effectively captured through smooth spectral perturbations. Their analysis relies on the limiting absorption principle and radiation estimates, leading to precise asymptotics of wave evolution at large times. Moreover, in a related direction, Kungsman and Melgaard established a Poisson-type wave trace formula for self-adjoint Dirac operators perturbed by compactly supported potentials, where resonances appear as poles of the continued resolvent~\cite{Kungsman2017Wave}.
Their global trace formula decomposes the wave evolution into resonance contributions and discrete spectral terms, demonstrating that the wave propagator encodes detailed spectral data.

These developments suggest that the wave kernel associated with $e^{it\D_A}$ can be effectively expressed as a linear combination of oscillatory weights and spectral projections. In this framework, the asymptotic and analytic behaviour of the wave kernel becomes closely tied to the convergence properties of eigenvalues and spectral projections. As such, the $A$-weighted wave kernel approach offers a refined analytic structure under perturbation in the sense of the weak topology.

Let $(M, g)$ be a closed spin Riemannian manifold and its associated Dirac operator $\D_g$. Given the complete discrete spectral decomposition $(\lambda_k, \psi_k)_{k\in \mathbb{Z}_*}$ of $\D_g$, where each eigenvalue $\lambda_k$ is listed according to its multiplicity and the sequence includes both positive and negative eigenvalues and $\psi_k$ denotes the eigenspinor corresponding to $\lambda_k$, the integral kernel of the wave operator admits the representation:
\begin{align}
    K(t,x,y) = \sum_{k\in\mathbb{Z}_*} e^{it\lambda_k} \psi_k(x) \otimes \psi_k^*(y),
\end{align}
where $\psi_k^* \in \Sigma^*_g M$ denotes the dual spinor at point $y$, and $\psi_k(x) \otimes \psi_k^*(y)$ defines a rank-1 operator from $\Sigma_y M$ to $\Sigma_x M$.

We observe that expressions of the form
\begin{align}
    \sum_{k \in [\mu_\ell]} \psi_k(x) \otimes \psi_k^*(y)
\end{align}
can be interpreted as the spectral projection operator $P_\ell(x,y)$ onto the eigenspace associated with the $\ell$-th distinct eigenvalue, where $[\mu_\ell]$ denotes the index set $\{k\in\mathbb{Z}_*\mid \lambda_k = \mu_\ell\}$. Thus,
this expression can equivalently be written using the spectral projection operators
\begin{align}
    K(t,x,y) = \sum_{\ell\in\mathbb{Z}_*} e^{it\mu_\ell} P_\ell(x,y).
\end{align}
This formulation shows that the wave kernel is a linear combination of spectral projection kernels, modulated by the oscillatory weights.

Now we focus on a sequence of weighted Dirac-type operators $A_m^{-1}\D_{g}$ on closed manifolds with spectral data $\{\lambda_k(A_m), \psi_k(A_m)\}_{k\in\mathbb{Z}_*}$ and $A_m$ satisfying hypothesis~\ref{hypo}, where eigenvalues $\lambda_k(A_m)$ are repeated according to multiplicity and $\psi_k(A_m)$ are the eigenspinors with respect to $\lambda_k(A_m)$, then the associated wave kernel can be defined as
\begin{align}
K_m(t,x,y) = \sum_{k\in\mathbb{Z}_*} e^{it\lambda_k^{(m)}} A_m\psi_k^{(m)}(x) \otimes \left(A_m\psi^{(m)}_k(y)\right)^*,
\end{align}
where $\lambda_k^{(m)}\coloneqq \lambda_k(A_m)$ and $\psi_k^{(m)}\coloneqq\psi_k(A_m)$.

Similarly to the discussion of classical spectral projection operator on closed manifolds, we have corresponding projection operator
\begin{align}
    P_k^{(m)}(x,y) \coloneqq \sum\limits_{k\in[\mu^{(m)}_\ell]}A_m\psi_k^{(m)}(x) \otimes \left(A_m\psi_k^{(m)}(y)\right)^*
\end{align}
then the $A_m$-weighted wave operator can be written as 
\begin{align}
K_m(t,x,y) = \sum_{\ell\in\mathbb{Z}_*} e^{it\mu_\ell^{(m)}} P_\ell^{(m)}(x,y),
\end{align}
where $\mu^{(m)}_\ell$ denotes the $\ell$-th distinct eigenvalue of $A_m^{-1}\D_g$ and $[\mu^{(m)}_\ell]$ denotes $\{k\in\mathbb{Z}_*\mid \lambda^{(m)}_k = \mu^{(m)}_\ell\}$.

We assume that $A$ is the weak limit of the above $\{A_m\}_{m=1}^{\infty}$ in $L^p$, in a completely analogous discussion, we have a wave operator $A$-weighted  immediately
\begin{align}
K_A(t,x,y) = \sum_{\ell\in\mathbb{Z}_*} e^{it\mu_\ell(A)} P_\ell^{A}(x,y),
\end{align}
where $\mu_\ell(A)$ denotes the $\ell$-th distinct eigenvalue of $A^{-1}\D_g$ and $P_\ell^{A}$ is the spectral projection operator with respect to $\mu_\ell(A)$.

In light of Theorem~\ref{thm:continuity-closed}, we have established the convergence of both eigenvalues and the corresponding spectral projections for the $A_m$-weighted Dirac operators on a closed spin manifold. In particular, for each fixed $k \in \mathbb{Z}_*$, the eigenvalues $\lambda_k(A_m)$ converge to $\lambda_k(A)$, and the spectral projectors $P_k(A_m)$ converge in $H^1$ to $P_k(A)$. As a result, the dimension of each eigenspace stabilizes for sufficiently large $m$, allowing us to disregard variations in eigenspace dimension in the above discussion.

This kernel provides the integral representation of the wave operator
\begin{align}
e^{it(A^{-1}\D_g)}: \Gamma(\Sigma_g M) \rightarrow \Gamma(\Sigma_g M)
\end{align}
as:
\begin{align}
    \left(e^{it(A^{-1}\D_g)} \psi_0\right)(x) = \int_M K_A(t,x,y) \psi_0(y) \, \dv_g(y), \quad \forall \psi_0 \in \Gamma(\Sigma_g M),
\end{align}

as established in~\cite{Kungsman2017Wave}.

For any eigenspinor $\psi_p, \psi_q\in \Gamma(\Sigma_g M)$,
\begin{align}
&\lim_{m\rightarrow\infty}\left\langle\left(e^{it(A_m^{-1}\D_g)} \psi_p(y)\right)(x),\psi_q(x)\right\rangle\\
= &\lim_{m\rightarrow\infty}\left\langle\int_M K_m(t,x,y) \psi_p(y) \dv_g(y), \psi_q(x)\right\rangle\\
= &\lim_{m\rightarrow\infty}\left\langle\int_M \sum_{k\in\mathbb{Z}_*} e^{it\lambda_k^{(m)}} A_m(x)\psi_k^{(m)}(x) \otimes \left(A_m(y)\psi^{(m)}_k(y)\right)^* (\psi_p(y)) \dv_g(y), \psi_q(x)\right\rangle\\
= &\lim_{m\rightarrow\infty}\left\langle \sum_{k\in\mathbb{Z}_*} e^{it\lambda_k^{(m)}}A_m(x) \psi_k^{(m)}(x)\langle\int_M  \left\langle A_m(y)\psi^{(m)}_k(y), \psi_p(y)\right \rangle \dv_g(y), \psi_q(x)\right\rangle\\
= & e^{it\lambda_p}\delta_{pq}\\
= & \left\langle\left(e^{it(A^{-1}\D_g)} \psi_p(y)\right)(x),\psi_q(x)\right\rangle.
\end{align}
As a consequence, the convergence of associated wave kernels
\begin{align}
K_{m}(t,x,y)\rightarrow K_A(t,x,y)\quad as\quad m\rightarrow\infty
\end{align}
as operators on the space of~$H^1$ spinors in the weak operator topology is verified.  

Therefore, the continuity of eigenvalues and spectral projectors of the weighted Dirac operator family under perturbation in the sense of the weak topology implies the convergence of wave kernels.

Furthermore, based on Theorem~\ref{thm:continuity-bdd}, and by following an argument entirely analogous to the one above, we obtain the corresponding convergence result for the compact spin Riemannian manifolds with boundary. Namely, the associated wave kernels satisfy
\begin{align}
K^{\rm{CHI}}_{m}(t,x,y) \to K^{\rm{CHI}}_A(t,x,y) \quad \text{as } m \to \infty
\end{align}
as operators acting on the space of $H^1$ spinors in the weak operator topology.

\printbibliography[
heading=bibintoc,
title={References}
]

\end{document}